\theoremstyle{definition}
\newtheorem{theorem}{Theorem}[section]
\newtheorem{prop}[theorem]{Proposition}
\newtheorem{lemma}[theorem]{Lemma}
\newtheorem{cor}[theorem]{Corollary}
\theoremstyle{definition}
\newtheorem{conj}[theorem]{Conjecture}
\newtheorem{definition}[theorem]{Definition}
\theoremstyle{remark}
\newtheorem{rmk}[theorem]{Remark}
\newtheorem{note}[theorem]{Note}
\newcommand{\F}{\mathbb{F}}
\newcommand{\Z}{\mathbb{Z}}
\newcommand{\Q}{\mathbb{Q}}
\newcommand{\R}{\mathbb{R}}
\newcommand{\Zt}{\mathbb{Z}^t}
\newcommand{\Qt}{\mathbb{Q}^t}
\newcommand{\Rt}{\mathbb{R}^t}
\newcommand{\A}{\mathcal{A}}
\newcommand{\X}{\mathcal{X}}
\newcommand{\C}{\mathbb{C}}
\newcommand{\vG}{G^\vee}
\newcommand{\cO}{\mathcal{O}}
\newcommand{\cK}{\mathcal{K}}
\newcommand{\Gr}{\mathrm{Gr}}
\newcommand{\Fl}{\mathrm{Fl}}
\newcommand{\val}{\operatorname{val}}
\newcommand{\Spec}{\operatorname{Spec}}
\newcommand{\Conf}{\operatorname{Conf}}
\newcounter{Qcount}
\begin{document}

\stepcounter{Qcount}

\title{Higher Laminations and Affine Buildings}

\author{Ian Le}
\address{Department of Mathematics\\ University of Chicago\\Chicago, IL 60637}
\email{ile@math.uchicago.edu}

\begin{abstract}
We give a Thurston-like definition for laminations on higher Teichmuller spaces associated to a surface $S$ and a semi-simple group $G$ for $G=SL_m$ and $PGL_m$. The case $G=SL_2$ or $PGL_2$ corresponds to the classical theory of laminations on a hyperbolic surface. Our construction involves positive configurations of points in the affine building. We show that these laminations are parameterized by the tropical points of the spaces $\X_{G,S}$ and $\A_{G,S}$ of Fock and Goncharov. Finally, we explain how the space of projective laminations gives a compactification of higher Teichmuller space.
\end{abstract}

\maketitle

\tableofcontents

\section{Introduction}

Higher Teichmuller theory studies the space of representations of $\pi_1(S)$, the fundamental group of a surface $S$, into a split-real group $G(\mathbb{R})$. This space is qualitatively quite different from the space of representations of $\pi_1(S)$ into a complex group or a compact group.  For example, the space of representations into a split-real group has many components.

Hitchin \cite{H} showed that one of these components is contractible, and behaves much like classical Teichmuller space. Classical Teichmuller space is obtained when one considers this component in the special case where $G=SL_2$. Hitchin's approach was analytic, and involved the study of Higgs bundles on Riemann surfaces.

More recently, Fock and Goncharov discovered a completely different approach to higher Teichmuller theory \cite{FG1}, which is more algebraic, combinatorial and explicit. For a surface $S$ with boundary and possibly marked points on the boundary, they look at $G(\mathbb{R})$-local systems on the surface with the extra data of a framing of the local system at the boundary of $S$. (With some modification the theory extends to the case of local systems on closed surfaces.) Fock and Goncharov define a pair of auxiliary moduli spaces $\X_{G,S}$ and $\A_{G,S}$ which are related to the space $\mathcal{L}_{G,S}$ of local systems. The two spaces differ in the type of framing of the local system the boundary.

An unusual feature of the spaces $\X_{G,S}$ and $\A_{G,S}$ (and the reason for their importance) is that each has an atlas of coordinate charts such that all transition functions involve only addition, multiplication and division. In other words, these spaces have a {\it positive atlas} and may be called {\it positive varieties}. The main ingredient in the construction of this positive atlas of coordinate charts is Lusztig's theory of total positivity. In fact, when $G=SL_m$ or $PGL_m$, they show more: these spaces have cluster-like structures on their ring of functions, and thus can be viewed as cluster varieties.

Because $\X_{G,S}$ and $\A_{G,S}$ are positive varieties, it makes sense to take the positive $\R_{>0}$ points of these moduli spaces. With some work, Fock and Goncharov show that taking the positive points gives an algebro-geometric description of the component of representations studied by Hitchin. The theory gives an explicit parameterization of positive representations, and it becomes manifest that the space of representations is contractible. Their theory has many interesting features: there is a close connection to cluster algebras; their moduli spaces can be quantized; the moduli spaces come in dual pairs which are a manifestation of Langlands duality.

The theory has geometric consequences as well. For example, Fock and Goncharov show that the corresponding representations are discrete and faithful (this was also shown by Labourie and Guichard for $G=SL_m$ using different methods). Another surprising consequence of their approach is that they can completely recover Thurston's theory of measured laminations.

In the 70's and 80's, Thurston invented the theory of laminations in a completely different context--the geometry and topology of two- and three-dimensional manifolds. Laminations arose from both the study of geodesics on hyperbolic surfaces and the study of Riemann surfaces via quadratic differentials. They gave a tool to analyze how hyperbolic/Riemann surfaces could degenerate, and gave a mapping class group-equivariant compactification of Teichmuller space.

Fock and Goncharov show that Thurston's space of measured laminations arises in a completely different way: by taking the tropical points of the $PGL_2$- and $SL_2$- moduli spaces $\X_{PGL_2,S}$ and $\A_{SL_2,S}$. This is surprising, as it gives an algebraic description of an object of geometric origin.

Moreover, by analogy, they define higher laminations as the tropical points of higher Teichmuller space. However, for groups of higher rank, a more concrete definition in the spirit of Thurston has remained elusive. Such a definition would confirm that the definition of Fock and Goncharov is the correct one, and clarify the bridge between ideas from geometric topology and the study of $G$-local systems on surfaces.

In this paper, we give a completely explicit definition of higher laminations for the space of framed $G(\mathbb{R})$-local systems on a surface $S$ and show that it coincides with the tropical points of higher Teichmuller space. We expect that one can extend much of what was done in \cite{FG3} for $SL_2$ laminations--the construction of functions corresponding to laminations, duality pairings, analogues of length functions--to the case of general $G$.

Another reason our definition is the right one is that it involves affine buildings, and is in agreement with work of Morgan, Shalen, Alessandrini, and Parreau \cite{MS}, \cite{A}, \cite{P}. Morgan and Shalen gave general procedures for compactifying $SL_2(\R)$ and $SL_2(\C)$ character varieties of two- and three-manifolds using spaces of $R$-trees. They show that this theory is equivalent to the theory of laminations (laminations and $R$-trees are, in the appropriate sense, dual to each other).

Parreau used different techniques (ultra-filters and Gromov-Haussdorf convergence) to prove a much more general result: that spaces of representations of an finitely generated group into a semi-simple Lie group have a compactification consisting of actions on buildings. Though very general, Parreau's results have some drawbacks: 1) they are not very explicit 2) the topology of these compactifications is difficult to understand and 3) it is difficult to control the types of actions of buildings that arise. Alessandrini has also shown that there exist compactifications of character varieties by actions on buildings using techniques of tropical geometry (as in Morgan and Shalen) to get a handle on these compactifications. Our approach fits in this line of work and gives a topologically simple and explicit tropical compactification of higher Teichmuller space.

Our main results are as follows:

\begin{theorem} Let $S$ be a (hyperbolic) surface with marked points, and let $C$ be its cyclic set at $\infty$. Associated to any tropical point of $\A_{G,S}(\Zt)$ there is an $\A$-lamination: a $\pi_1(S)$-equivariant virtual positive configuration of points in the affine building of $G$ parameterized by $C$. This virtual positive configuration is unique up to equivalence.

Analogously, associated to any tropical point of $\X_{G,S}(\Zt)$ there is an $\X$-lamination: a positive configurations of cones in the affine building where the cones are parameterized by every finite set of the set $C$, compatible under restriction from one finite set to another, and equipped with an action of $\pi_1(S)$ on these configurations of cones. This positive configuration of cones is unique up to equivalence.

The space of projectived laminations provides a spherical boundary for the corresponding higher Teichmuller spaces $\A_{G,S}(\R_{>0})$ and $\X_{G,S}(\R_{>0})$.
\end{theorem}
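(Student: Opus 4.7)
The plan is to construct both families of laminations by lifting the integer tropical data to a non-Archimedean field and passing to the affine building via the standard map from flags to building points. Fix an ideal triangulation $T$ of $S$ lifted to a $\pi_1(S)$-equivariant triangulation $\widetilde T$ of the universal cover $\widetilde S$, so that the ideal vertices of $\widetilde T$ are naturally elements of the cyclic set $C$. Fock-Goncharov's positive atlas equips $\A_{G,S}$ with coordinates $\{a_i\}$ attached to $T$; a tropical point $\ell \in \A_{G,S}(\Zt)$ is then an assignment $a_i \mapsto n_i \in \Z$. Set $K = \C((t))$ with valuation $\val(t^k) = k$, and define a $K$-point of the positive $\A$-variety by $a_i := t^{n_i}$. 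Unwinding Fock-Goncharov's interpretation of the $\A$-space, this $K$-point amounts to a $\pi_1(S)$-equivariant collection of decorated flags in $G(K)/U(K)$ attached to the ideal vertices of $\widetilde T$, that is, to elements of $C$. Composing with the canonical map from $G(K)/U(K)$ to the affine building of $G$ over $K$ then yields the sought-after $\pi_1(S)$-equivariant configuration of points in the building parameterized by $C$.

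The key technical step is to verify that this configuration is \emph{positive} in the sense used throughout the paper. This holds because the lifted $K$-point sits by construction in the positive Fock-Goncharov atlas (its coordinates are pure monomials $t^{n_i}$), so the cluster coordinates in any other chart evaluate to subtraction-free Laurent expressions in the $t^{n_i}$ whose valuations behave tropically; after retraction to the building this precisely records the positivity of the resulting configuration. Independence of the choice of $T$, up to the equivalence of virtual positive configurations used in the statement, then reduces to tropicalizing the $\A$-side cluster mutations, which are subtraction-free and hence continuous and piecewise-linear on $\Zt$-points.

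The $\X$-statement is handled in the same way, using the $\X$-coordinates and undecorated flags in $G(K)/B(K)$. Because the $\X$-space parameterizes configurations only up to a simultaneous $G$-action, positivity is defined on finite sub-configurations; the required compatibility under restriction is automatic since each finite-set positive configuration is cut out from a single global $\pi_1(S)$-equivariant family of flags. For the compactification claim, both $\A_{G,S}(\R_{>0})$ and $\X_{G,S}(\R_{>0})$ are diffeomorphic to $\R^N$ via any positive coordinate chart, while the tropical spaces are piecewise-linear manifolds also homeomorphic to $\R^N$; the radial rescaling map $p \mapsto p/\|\log p\|$ in a fixed chart extends to a homeomorphism between projectivized tropical points and the sphere at infinity of the positive real locus, and the piecewise-linearity of cluster mutations on the tropical side (matched with the positivity of the same mutations on the real side) ensures that this spherical boundary is independent of the chart.

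The main obstacle is the positivity verification and the accompanying compatibility of the retraction $G(K)/U(K) \to $ building with cluster transformations. This requires formulating the non-Archimedean analog of total positivity for configurations of decorated flags at the level of the affine building, and showing that mutating the $\A$-chart tropically corresponds exactly to the admissible modifications among virtual positive configurations. Once this is established, the rest of the argument is standard tropicalization and compactification bookkeeping.
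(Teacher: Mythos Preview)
Your outline captures the high-level strategy correctly, but it glosses over the step that is the actual technical heart of the paper. You write ``Composing with the canonical map from $G(K)/U(K)$ to the affine building of $G$ over $K$ then yields the sought-after \ldots configuration''. There is no such canonical map: $G(\cK)/U(\cK)$ and $G(\cK)/G(\cO)$ are quotients by unrelated subgroups, and a principal flag does not determine a lattice. This is not a detail to be checked later; it is the reason the word \emph{virtual} appears in the statement you are trying to prove.

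The paper's fix is the content of Section~5. One lifts each flag $F_i\in G/U^-(\cK)$ non-canonically to $g_i\in G(\cK)$, then twists by $t^{-\lambda_i}$ for a dominant coweight $\lambda_i$ and projects to $G(\cK)/G(\cO)$. Lemmas~5.4--5.7 show that for $\lambda_i$ sufficiently dominant the resulting lattice is independent of the lift and realizes the maximal valuations of all the determinant functions $f_{ijk}$ (a ``good lift''); this is what makes the configuration \emph{positive} and why the data one retains is a pair $(x_i,-\lambda_i)$ rather than a bare point $x_i$. The equivalence relation on such virtual configurations is then defined recursively by a cutting procedure on the building (not by isometry), and Theorem~5.11 --- the most substantial argument --- shows that two virtual positive configurations are equivalent if and only if their tropical $f_{ijk}^t$ coordinates agree. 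Your proposal contains no mechanism for any of this; in particular ``independence of $T$ reduces to tropicalizing the $\A$-side mutations'' is true for the coordinates but does not by itself give well-definedness or uniqueness of the building configuration.

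Two smaller points: you should work over $\R((t))$ with positive leading coefficient rather than $\C((t))$, since the identity $-\val(x+y)=\max(-\val x,-\val y)$ requires $x,y\in\cK_{>0}$ to avoid cancellation; and your $\X$-discussion should explain why one obtains only a \emph{family} of configurations (again because of the $\Lambda$-ambiguity, now uncompensated by the $\lambda_i$). Your compactification paragraph, on the other hand, is essentially the argument of Section~7 and is fine.
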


Higher laminations can be thought of as an analogue of $R$-trees that relates the combinatorics of affine buildings and representation theory on the one hand (objects like hives, honeycombs and Satake fibers), and degenerations of bundles and geometric structures on the other.

Our result can be viewed in the context of the duality between the $\X$ and $\A$ spaces for Langlands dual groups. It gives strong evidence for some of the duality conjectures of Fock and Goncharov \cite{FG2}.

We expect our theory of laminations on higher Teichmuller spaces to have lots of applications, and we list a few here.

\begin{enumerate}
\item The general philosophy of cluster algebras and the duality conjectures of Fock and Goncharov lead us to expect that the cluster complex associated to $\A_{G,S}$ embeds inside the space of laminations for the space $\X_{\vG,S}$, where $\vG$ is the Langlands dual of $G$. This gives a parameterization of all cluster variables and all clusters. This is of interest because higher Teichmuller spaces give many examples of cluster algebras of rather general (and sometimes mysterious) type and also include many well-studied cases (for example, most finite mutation type cluster algebras as well as the elliptic $E_7$ and $E_8$ algebras).

\item In fact, one expects more: laminations should parameterize atomic/canonical bases for the cluster algebras. These are of interest in physics, where they correspond with line operators \cite{GMN1}. Higher laminations are also related to the spectral networks: in \cite{GMN1}, the authors conjecture that there are spectral networks associated to each cluster in the cluster algebra and that passage through ``saddle connections'' corresponds with mutation in cluster algebras. This would mean that the space of higher laminations, whose piecewise-linear structure encodes the cluster complex, should be in bijection with spectral networks. There is strong evidence of this in the case $G=SL_2$ from the Hubbard-Mazur theorem. A more precise relationship is conjectured in \cite{KNPS}: they say that spectral networks arise from $\pi_1$-equivariant harmonic maps to buildings. We believe that the correct buildings to map to come from our construction of higher laminations.

Recent work on canonical bases in cluster algebras includes \cite{GHKK}. Another interesting application of spectral networks and cluster algebras in mirror symmetry has been the work of \cite{BS}, which relates spaces of quadratic differentials and spaces of stability conditions.

\item A better understanding of the structure of the cluster algebra on the space $\A_{G,S}$ should in particular lead to a better understanding of its symmetries. These symmetries are expected to be a higher analogue of the mapping class group. One of our motivations for a geometric definition of laminations was to obtain some understanding of the higher mapping class group. It is possible that higher laminations can be used as a tool to study of the higher mapping class group and its dynamics in the way that laminations are used to study the mapping class group.

\item One hopes to construct length functions associated to laminations on higher Teichmuller space. One approach might be to identify them with asymptotics of canonical functions on higher Teichmuller space. This could lead to a better understanding of the geometric structures that higher Teichmuller space parameterizes.

\item The study of positive configurations of points inside the affine building for $G$ is related to counting tensor product multiplicities for representations of $\vG$. Positive configurations of points inside the affine building are in bijection with components of Satake fibers (this follows from work of Goncharov-Shen \cite{GS} building on work of Kamnitzer \cite{K}, though their perspective is slightly different; we explain the relationship in section 5.7).

\item Positive configurations of points in the affine building are new objects in the geometry of the affine building. They are fairly rigid, they can be parameterized, and they are of a tropical nature, unlike general configurations of points in the affine buliding. There should be a duality pairing between positive configurations for Langlands dual groups which encapsulates rich geometric structure.
\end{enumerate}

Let us summarize the contents of this paper. In section 2, we begin by reviewing the constructions of higher Teichmuller spaces in \cite{FG1}. We will adapt some of the exposition from their introduction. We will give the correct definition of the space $\A_{G,S}$, correcting a minor error in \cite{FG1}. In section 3 we discuss some generalities on tropical points of positive varieties, spelling out some ideas that are implicit in \cite{FG1}, and related to more standard ideas in tropical geometry \cite{M}, \cite{SW}. In section 4 we review the definition and basic properties of the affine Grassmanian and affine buildings. In section 5.1 we give a conceptual outline of the definition/construction of virtual positive configurations of points in the affine building, the central object by which we define higher laminations. This summarizes sections 5.4-5.6 which form the heart of the paper. Section 5.2 lays the groundwork for the complete definition of virtual positive configurations in section 5.3. In sections 5.4-5.6, we then explain the construction of positive configurations. The use of cluster co-ordinates turns out to be important here. This leads up to Theorem~\ref{tropical points}, the central result of this paper, describing $\A$-laminations on the disc. From here we deduce how to define $\A$-laminations on any surface. In section 5.7 we discuss the relationship of our work with \cite{GS}, and derive the hive inequalities from our approach. In section 6, we give the analogous result for $\X$-laminations, and give a proposal for an extension of the definition to closed surfaces. Finally, in section 7, we describe an application of the theory: a spherical compactification of higher Teichmuller space as a closed ball such that the action of the (higher) mapping class group extends to the boundary.

\medskip
\noindent{\bf Acknowledgments} I would like to thank Vladimir Fock for his generosity in clarifying the ideas of \cite{FG1} and \cite{FG2}, and also pointing out that the study of higher laminations was an interesting problem. I thank Francois Labourie for helpful conversations and encouragement. My ideas are very indebted to Joel Kamnitzer's work, and he helped me understand the relationship of his work to my own. My thinking about cluster algebras was very influenced by Lauren Williams, Greg Musiker and David Speyer. Finally, my advisor David Nadler has continually been a valuable sounding board for my ideas, and I am grateful for his support and encouragement.

\section{Background}

\subsection{Setup}

Let $S$ be a compact oriented surface, with or without boundary, and possibly with a finite number of marked points on each boundary component. We will refer to this whole set of data--the surface and the marked points on the boundary--by $S$. We will always take $S$ to be hyperbolic, meaning it either has negative Euler characteristic, or contains enough marked points on the boundary (in other words, we can give it the structure of a hyperbolic surface such that the boundary components that do no contain marked points are cusps, and all the marked points are also cusps).

Let $G$ be a semi-simple algebraic group. When $G$ is adjoint, i.e., has trivial center (for example, when $G=PGL_m$), we can define a higher Teichmuller space $\X_{G,S}$. On the other hand, for $G$ simply-connected  (for example, when $G=SL_m$), we can define the higher Teichmuller space $\A_{G,S}$. They will be the space of local systems of $S$ with structure group $G$ with some extra structure of a framing of the local system at the boundary components of $S$. Alternatively, these spaces describe homomorphisms of $\pi_1(S)$ into $G$ modulo conjugation plus some extra data.

When $S$ does has at least one hole, the spaces $\X_{G,S}$ and $\A_{G,S}$ have a distinguished collection of coordinate systems, equivariant under the action 
of the mapping class group of $S$. Using an elaboration of Lusztig's work on total positivity, one can show that all the transition functions between these coordinate systems are subtraction-free, and give a {\it positive atlas} on the corresponding moduli space. This positive atlas gives the spaces $\X_{G,S}$ and $\A_{G,S}$ the structure of a {\it positive variety}.

If $X$ is a positive variety (for example, $X=\A_{G,S}$ or $\X_{G,S}$), we can take points of $X$ with values in any {\it semifield}, i.e. in any set equipped with operations of addition, multiplication and division, such that these operations satisfy their usual properties (the most important and non-trivial being distributivity). For us, the important examples of semifields will be the positive real numbers $\R_{>0}$; any tropical semifield; and the semifield which interpolates between these two: the field of formal Laurent series over $\R$ with positive leading coefficient, which we denote $\cK_{>0}$. The tropical semifields $\Z^t, \Q^t, \R^t$ are obtained from $\Z, \Q, \R$ by replacing the operations of multiplication, division and addition by the operations of addition, subtraction and taking the maximum, respectively.

Taking $\R_{>0}$-points of the spaces $\X_{G,S}$ and $\A_{G,S}$ allows us to recover higher Teichmuller spaces. The proof of this takes up the bulk of \cite{FG1}. This space consists of the real points of $\X_{G,S}$ and $\A_{G,S}$ whose coordinates in one, and hence in any, of the constructed coordinate charts are positive. 

The existence of these extraordinary positive co-ordinate charts depends on G. Lusztig's theory of positivity in semi-simple Lie groups \cite{Lu}, \cite{Lu2}, and is a reflection of the cluster algebra structure of the ring of functions on these spaces.

\subsection{Definition of the spaces $\X_{G,S}$ and $\A_{G,S}$}

The data of a framing of a local system involves the geometry of the flag variety associated to a group. Let $B$ be a Borel subgroup, a maximal solvable subgroup of $G$. Then ${\mathcal B} = G/B$ is the flag variety. Let $U := [B,B]$ be a  maximal unipotent subgroup in $G$. 

Let ${\mathcal L}$ be a $G$-local system on $S$. For any space $X$ on equipped with a $G$-action, we can form the associated bundle ${\mathcal L}_{X}$. For $X=G/B$ we get the associated flag bundle ${\mathcal L}_{\mathcal B}$, and for $X=G/U$, we get the associated principal flag bundle ${\mathcal L}_{\mathcal A}$. Then we will call $\A = G/U$ the ``principal affine space'' (sometimes also referred to as the ``base affine space''). We will refer to elements of $\A$ as ``principal flags.''

\begin{definition}
A {\it framed  $G$-local system on $S$} is a pair $({\mathcal L}, \beta)$, where  ${\mathcal L}$ is a  $G$-local system on $S$, and  $\beta$ a flat section of the restriction of ${\mathcal L}_{\mathcal B}$ to the punctured boundary of $S$.

The space ${\mathcal X}_{G, S}$ is the moduli space of framed $G$-local systems on $S$. 
\end{definition}

The definition of the space ${\mathcal A}_{G, S}$ is slightly more complicated. We first will need to define the notion of a twisted local system.

Let $G$ be simply-connected. Then the maximal length element $w_0$ of the Weyl group of $G$ has a natural lift to $G$, denoted $\overline w_0$. Let $s_G:= {\overline w}^2_0$. It turns out that $s_G$ is in the center of $G$ and that $s^2_G =e$. Depending on $G$, $s_G$ will have order one or order two. For example, for $G = SL_{2k}$, $s_G$ has order two, while for $G = SL_{2k+1}$, $s_G$ has order one.

The fundamental group $\pi_1(S)$ has a natural central extension by $\Z / 2\Z$. We see this as follows. For a surface $S$, let $T'S$ be the tangent bundle with the zero-section removed. $\pi_1(T'S)$ is a central extension of $\pi_1(S)$ by $\Z$:

$$\Z \rightarrow \pi_1(T'S) \rightarrow \pi_1(S).$$

The quotient of $\pi_1(S)$ by the central subgroup $2 \Z \subset \Z$, gives ${\overline \pi}_1(S)$ which is a central extension of $\pi_1(S)$ by $\Z / 2\Z$:

$$\Z / 2\Z \rightarrow {\overline \pi}_1(S) \rightarrow \pi_1(S).$$

Let $\sigma_S \in {\overline \pi}_1(S)$ denote the non-trivial element of the center.

A {\it twisted $G$-local system} is a representation ${\overline \pi}_1(S)$ in $G$ such that $\sigma_S$ maps to $s_G$. Such a representation gives a local system on $T'S$.

Now we must describe the framing data for a twisted local system. Let ${\mathcal L}$ be a twisted $G$-local system on $S$. Such a twisted local system gives an associated principal affine bundle ${\overline {\mathcal L}}_{\mathcal A}$ on the punctured tangent bundle $T'S$. For any boundary component of $S$, we will construct sections of the punctured tangent bundle above these boundary components. Given any boundary component, consider the outward pointing unit tangent vectors along the boundary--this gives a section of the punctured tangent bundle above each boundary component of $S$. We get a bunch of loops and arcs in $T'S$ lie over the boundary of $S$. Call this the {\em lifted boundary}.

\begin{definition}

A {\it decorated $G$-local system} on $S$ consists of $({\mathcal L}, \alpha)$, where ${\mathcal L}$ is a twisted local system on $S$ and $\alpha$ is a flat section of the restriction of ${\overline {\mathcal L}}_{\mathcal A}$ to the lifted boundary.

The space ${\mathcal A}_{G, S}$ is the moduli space of decorated $G$-local systems on $S$.
\end{definition}

Note that in the case where $s_G=e$, a decorated local system is just a local system on $S$ along with a flat section of ${\mathcal L}_{\mathcal A}$ restricted to the boundary. One can generally pretend that this is the case without much danger.

\subsection{Relation to configurations of flags}

The positive co-ordinate systems on ${\mathcal X}_{G, S}$ and ${\mathcal A}_{G, S}$ arise by rationally identifing them with spaces of configurations of flags. We will only outline this part of the story.

Let $S$ be a hyperbolic surface (a surface with negative Euler characteristic or with enough marked points on the boundary). Choose some hyperbolic structure on $S$ such that the boundary components that do no contain marked points are cusps, and all the marked points are also cusps. The particular choice of hyperbolic structure will turn out not to matter. Then the universal cover of $S$ will be a subset of the hyperbolic plane, and all these cusps will lie at the boundary at infinity of the hyperbolic plane. These cusps form a set $C$ that has a cyclic ordering. $C$ also carries a natural action of $\pi_1(S)$. The action of $\pi_1(S)$ preserves the cyclic ordering on $C$. The set $C$ with its cyclic order is independent of our choice of hyperbolic structure on $C$.

A $\pi_1(S)$-equivariant configuration of flags (respectively principal affine flags) parameterized by $C$ is a map $\beta: C \rightarrow {\mathcal B}$ (respectively a map $\beta: C \rightarrow {\mathcal A}$) such that there is a map $\rho: \pi_1(S) \rightarrow G$ such that for $\gamma \in \pi_1(S)$,

$$\beta(\gamma \cdot c) = \rho(\gamma) \cdot c$$
for all points $c \in C$.

Starting with any point of ${\mathcal X}_{G, S}$ (respectively ${\mathcal A}_{G, S}$), we may look at the universal cover of $S$. On the universal cover, the local system becomes trivial, and the framing of the local system then gives a flag (repectively a principal affine flag) at each point of the cyclic set $C$. Thus any point in ${\mathcal X}_{G, S}$ (respectively ${\mathcal A}_{G, S}$) gives a $\pi_1(S)$ equivariant configuration of flags (respectively principal affine flags) parameterized by $C$.

\begin{theorem}
\cite{FG1} The space ${\mathcal X}_{G, S}$ has a positive atlas that comes from identifying a framed local system with a $\pi_1(S)$-equivariant positive configuration of flags parameterized by $C$.

The space ${\mathcal A}_{G, S}$ has a positive atlas that comes from identifying a decorated local system with a $\pi_1(S)$-equivariant twisted positive cyclic configuration of principal affine flags parameterized by $C$.

\end{theorem}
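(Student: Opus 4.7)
The plan is to set up a birational identification between the moduli space and a space of equivariant configurations of flags, and then import Lusztig's theory of positivity through an ideal triangulation of $S$. First I would pass to the universal cover $\widetilde{S}$: a framed $G$-local system $(\mathcal{L}, \beta)$ becomes a trivializable local system on $\widetilde{S}$ together with a monodromy representation $\rho : \pi_1(S) \to G$, and the flat section $\beta$ supplies a flag $\beta(c) \in \mathcal{B}$ at every cusp $c \in C$. Equivariance $\beta(\gamma \cdot c) = \rho(\gamma) \cdot \beta(c)$ is automatic from flatness, and the assignment is reversible, so $\X_{G,S}$ is birationally identified with the configuration space of $\pi_1(S)$-equivariant maps $C \to \mathcal{B}$. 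The same argument on the punctured tangent bundle $T'\widetilde{S}$ handles the decorated case, with the twist by $s_G$ precisely accounting for the ``twisted'' qualifier on configurations of principal affine flags.

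Next I would fix an ideal triangulation of $S$ with vertices at the marked points and cusps, lift it to $\widetilde{S}$, and build coordinates triangle by triangle. For each ideal triangle with vertices $c_1, c_2, c_3 \in C$, the triple of flags (respectively principal affine flags) has an explicit open cell in its moduli described by Fock and Goncharov; for $G = SL_m$ the coordinates are essentially the triple ratios together with the ``snake'' invariants, while for general $G$ they come from Lusztig's parametrization of the open cell inside $\mathcal{B}^3$ or $\mathcal{A}^3$. Each internal edge then contributes further coordinates measuring how two adjacent triples glue. Assembling these over the triangulation produces a rational map from $\X_{G,S}$ (resp.\ $\A_{G,S}$) into an algebraic torus, and positivity on a single chart is the Lusztig-theoretic assertion that positive configurations of flags are precisely those whose coordinates take values in $\R_{>0}$.

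The main obstacle is showing that the resulting atlas is independent of the triangulation, which reduces to proving that a flip of the diagonal of an ideal quadrilateral induces a subtraction-free rational change of coordinates; since any two triangulations are connected by a sequence of flips, the local positivity then globalizes. For $\A_{G,S}$ with $G = SL_m$ the flip is precisely an $\mathcal{A}$-cluster mutation governed by the octahedron recurrence; for $\X_{G,S}$ it is the corresponding $\mathcal{X}$-mutation; for general $G$ one appeals to Lusztig's positivity for transition maps between pinnings of double Bruhat cells associated to different reduced words. Equivariance of the atlas under the mapping class group follows formally from independence of the triangulation, and equivariance under $\pi_1(S)$ is built into the construction on $\widetilde{S}$.
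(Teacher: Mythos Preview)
The paper does not prove this theorem: it is stated in the background section with the citation \cite{FG1} and followed only by the remark ``For more details, see \cite{FG1}.'' There is no proof in this paper to compare your proposal against.

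That said, your outline is a faithful sketch of the Fock--Goncharov argument: the birational identification with equivariant configurations via the universal cover, the construction of coordinates from an ideal triangulation using Lusztig positivity on triples and quadruples of flags, and the reduction of triangulation-independence to flip positivity are exactly the ingredients of \cite{FG1}. If you were asked to reproduce the proof rather than to compare it with this paper, your plan would be correct in spirit, though the general-$G$ case of the flip (positivity of the transition for a change of reduced word) is the genuinely hard step and deserves more than a one-line appeal to Lusztig.
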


\begin{note} There are two things to be careful about in the above theorem. The first is that the atlas is actually on the space of positive configurations of flags parameterized by $C$ that are equivariant for {\em some} map $\rho: \pi_1(S) \rightarrow G$. It is possible, though quite unusual, that the same configuration of flags could be equivariant for more than one representation $\rho: \pi_1(S) \rightarrow G$. This is the reason that the identification of ${\mathcal X}_{G, S}$ (respectively ${\mathcal A}_{G, S}$) with positive configurations of flags (respectively principal affine flags) parameterized by $C$ is only birational. The second, less serious, caution, is that it is unclear whether the atlases cover the whole space. However, we do not require this to be the case.

We shall see that once we have taken the $\R_{>0}$-points of these spaces, both these problems magically disappear--the positive points are sufficiently generic that each point of ${\mathcal X}_{G, S}$ (respectively ${\mathcal A}_{G, S}$) corresponds to a unique configuration of flags, and each coordinate chart completely covers the positive part of the space (though these facts require some work to prove).
\end{note}

In particular, when  $S$ is a disk with marked points on the boundary we get moduli spaces of configurations of points 
in the flag variety ${\mathcal B}: = G/B$ and twisted configurations of points of the principal affine variety ${\mathcal A}:= G/U$. For more details, see \cite{FG1}
 
\section{The tropical points of higher Teichmuller space}

In the previous section, we explained the construction of higher Teichmuller space. We will now give one construction of its tropical points. The main task of this paper will be to use this construction to give a concrete description of higher laminations.

We give a simple way to construct tropical points of any positive variety. Let $\cK$ be the field of formal Laurent series over $\R$, $\R((t))$. This ring has a natural valuation $val: \cK \rightarrow \Z.$ The goal will be to understand tropical $\Z^t$-points of a variety via valuations of its $\cK$-points.

Let us consider the positive semifield $\cK_{>0}$ which consists of those Laurent series with positive leading coefficient. Let $X$ be any positive variety, in other words a variety with an atlas of charts such that all transition functions involve only multiplication, division and addition (for example $\X_{G,S}$ or $\A_{G,S}$). We may then consider the $\cK_{>0}$ points of these varieties.

Let $x \in X(\cK_{>0})$. Then there is a corresponding tropical point $x^t$ of the space $X(\Zt)$. This point $x^t$ is characterized by the property that if $f$ is one of the positive coordinates of a positive chart (or more generally any positive function), then $$f(x^t)=-\val f(x).$$
In other words, we specify the tropical coordinates of $x^t$ in each chart as being negative of the valuation of the coordinate of $x$. To see that $x^t$ is well-defined, we only need to check that under a change of coordinate charts, the functions $-\val f(x)$ transform tropically. However, this is clear, because all transition functions between coordinate charts involve only multiplication, division and addition, and we have $$-\val(f(x)g(x))=-\val(f(x))-\val(g(x)),$$ $$-\val(f(x)/g(x))=-\val(f(x))+\val(g(x)),$$ $$-\val(f(x)+g(x))=max\{-\val(f(x)),-\val(g(x))\}$$ whenever $f$ and $g$ are functions coming from the coordinate charts. The last equality holds because both $f(x)$ and $g(x)$ have positive leading coefficient.

In other words, the negative valuations of $\cK_{>0}$-points of a positive variety automatically satisfy the tropical relations. Thus we get a map $-\val:  X(\cK_{>0}) \rightarrow X(\Zt)$. The map is surjective: in any coordinate chart, we may specify the valuations of coordinates of a point of  $X(\cK_{>0})$ as we wish. Because all transition functions between charts are invertible, specifying the coordinates in one chart is the same as specifying the coordinates in every chart.

Thus, in order to understand tropical points of $\X_{G,S}$ or $\A_{G,S}$, we must analyze the fibers of this map and see what the points in one fiber have in common. In other words, we would like to isolate what invariant information is contained in the tropical functions. Our goal will be to show that he piece-wise linear combinatorics of the affine building is exactly what is captured by the tropical coordinates.

Our goal will be acheived in the case where $G=SL_m$. Many of the steps will have clear generalizations to general groups (for example, the definition of laminations in terms of affine buildings); we will note those steps which do not extend as straightforwardly. We hope to treat in a future paper the case of a general semi-simple Lie group, for which we believe the best approach would be to explicitly construct cluster coordinates on Teichmuller spaces associated to these groups. These coordinates would be analogous to the ``canonical coordinates'' of Fock and Goncharov on Teichmuller spaces for $SL_m$. Henceforth we will be concerned primarily with the case $G=SL_m$ or $G=PGL_m$.

If instead of considering $K$, we consider the ring of Laurent series over $t^\lambda$ where $\lambda$ is allowed to vary in $\Z, \Q$ or $\R$, we get obtain different types of laminations with coefficients in  $\Zt, \Qt$ or $\Rt$.

\subsection{Limit sets and compactifications}

Here is one useful way to think about tropical points. $\Spec \cK$ can be thought of as the punctured formal neighborhood of the origin in the real line. (One says something similar about $\F((t))$ for any field $\F$.) Then $\Spec \cK_{>0}$ is the positive part of the punctured formal neighborhood of the origin. This punctured formal neighborhood looks like an infinitesimal path. We may then interpret $X(\cK_{>0})$ as the space of infinitesimal paths in the positive part of $X$, $X(\R_{>0})$. The path goes towards infinity in $X(\R_{>0})$ exactly when the point $x \in X(\cK_{>0})$ correponding to this path has non-zero valuation; when $x$ has valuation $0$, we have that $x$ actually comes from a point in $X(\R[[t]])$ and the path has an endpoint in $X(\R_{>0})$. Paths which have the same valuation are asymptotic to each other. Paths that have proportional valuations go towards the same point at infinity, but at different rates. Thus we see why projectivized tropical points should give a boundary to the positive space $X(\R_{>0})$.

More precisely, paths which have proportional valuations approach the same point at infinity of the logarithmic limit set, first defined by Bergman in \cite{B}. Projectivized tropical points form the boundary of the logarithmic limit set. The logarithmic limit set gives us a tropical compactification of $X$. We explain this further in Section 7.

One may replace the ring $\Spec \cK_{>0}$ by some variants, such as convergent power series, germs of convergent power series, Puiseux series, etc., in which case the infinitesimal paths in the above discussion will be replaced by actual paths in  $X(\R_{>0})$. Again, these paths will approach some boundary point in the tropical compactification. This point of view is explained thoroughly in \cite{A2}.

\subsection{Other approaches to tropicalization}

We should say a word about why we use the semi-field $\cK_{>0}$ in the above discussion, as opposed to a field like $\R((t))$ or $\C((t))$. The reason is that we seek a tropicalization and compactification of the space $X(\R_{>0})$. Taking valuations of $\R((t))$- or $\C((t))$-points of $X$ gives tropical points of $X(\R)$ or $X(\C)$. If we were looking to tropicalize or compactify the moduli space of representations of $\pi_1(S)$ into $G(\R)$ and not just the positive component, we would consider valuations of $\R((t))$-points. Similarly, if we were looking to tropicalize or compactify the moduli space of representations of $\pi_1(S)$ into $G(\C)$, we would consider valuations of $\C((t))$-points.

Let use elaborate on this. Our approach of taking valuations of $\cK_{>0}$-points is different from the usual definition of tropicalization. Tropical geometry began with the work of Bergman, \cite{B}, and later the work of Bieri and Groves, \cite{BG}. There are several ways to define the tropicalization of a variety $X$, and a comparison of various definitions can be found in the book of Sturmfels and Maclagan (\cite{M}).

Here is one definition of the tropicalization of a variety $X$. The tropicalization of $X$ consists of the closure of the points obtained by taking valuations of the $\C((t^{1/n}))$-points of $X$ over all $n \in \mathbb{N}$. Equivalently, it is the closure of the points obtained by taking valuations of complex Puiseux-series-valued points of $X$. The integral points of the tropicalization come from taking valuations of $\C((t))$-points of $X$. The projectivization of the tropicalization of $X$ forms the boundary of the logarithmic limit set of $X(\C)$. A proof of this is outlined in \cite{M}, with one important step of the proof found in \cite{Pa}. Another good survey which explains this is \cite{Gu}.

In contrast, our set $X(\Zt)$ consists of valuations of $\cK_{>0}$-points of $X$, while what we call the tropicalization, $X(\Rt)$, is the closure of $X(\Qt)$, the set of valuations of points taken in the field of Puiseux series real coefficients and positive leading coefficient. The projectivized tropicalization gives the boundary of the logarithmic limit set of $X(\R_{>0})$. The proof of this fact was given by \cite{A2}.

Thus our approach to tropicalization, following the definitions of Fock and Goncharov, where one takes values of a variety in a tropical semi-field, or equivalently takes valuations of $\R_{>0}$-points, might be more properly called the {\it positive part of the tropicalization}. However, because our primary object of study is $X(\R_{>0})$, when we talk of tropical points, we will always mean this positive part of the tropicalization. In the context of higher Teichmuller theory, this seems to be the more fundamental object; it is the object that ``knows'' about the cluster complex, and it conjecturally parameterizes canonical bases of functions and spectral networks.

For a comparison of tropical varieties and their positive parts, see work of Speyer and Williams on the tropical Grassmanian \cite{SW}. Our work is a kind of generalization of theirs, but treating only the positive parts of tropical varieties. From that paper, one sees that a tropical variety is in general much more complicated, both topologically and combinatorially, than its positive part.

\section{Background on affine buildings}

In this section we define the affine Grassmanian and affine buildings. We will follow some of the treatment of \cite{FKK}, which in many ways was inspiration for our work. We will first define the real affine Grassmanian. (The affine Grassmanian is an ind-scheme, or an inductive limit of schemes, and it can be defined over any ring, but we will make use only of its real points.) Let $G$ be a simple, simply-connected complex algebraic group and let $\vG$ be its Langlands dual group. Let $\cO = \R[[t]]$ be the ring of formal power series over $\R$ and let $\cK = \R((t))$ be its fraction field. Then 
$$\Gr = \Gr(G) = G(\cK)/G(\cO)$$ is the \emph{real affine Grassmannian} for $G$.  It can be viewed as a direct limit of real varieties of increasing dimension.

For $G=SL_m$, a point in the affine Grassmanian corresponds to a finitely generated, rank $m$ $\cO$-submodule of $\cK^m$ such that if $v_1, \dots, v_m$ are generators for this submodule, then $$v_1 \wedge \dots \wedge v_m=e_1 \wedge \dots \wedge e_m$$ where $e_1, \dots, e_m$ is the standard basis of $\cK^m$. Such full rank $\cO$-submodules are often called {\it lattices}. $G(\cK)$ acts on the space of lattices with the stabilizer of each lattice being isomorphic to $G(\cO)$, which acts by changing the basis of the submodule while leaving the submodule itself fixed. We will later make use of this interpretation.

The affine Grassmannian $\Gr$ also has a metric valued in dominant coweights: the set of two elements of $\Gr$ up to the action of $G(\cK)$ is exactly the set of double cosets $$G(\cO) \backslash G(\cK) / G(\cO).$$ These double cosets, in turn, are in bijection with the cone $\Lambda_+$ of dominant coweights of $G$. Recall that the coweight lattice $\Lambda$ is defined as $\mathrm{Hom}(\mathbf{G}_m,T)$. The coweight lattice contains dominant coweights, those coweights lying in the dominant cone. For example, for $G=GL_m$, the set of dominant coweights is exactly the set of $\mu=(\mu_1, \dots, \mu_m)$, where $\mu_1 \geq \mu_2 \geq \cdots \geq \mu_n$ and $\mu_i \in \Z$. For $G=PGL_m$, it is this same set modulo the the one-dimensional space spanned by the vector $(1,1,\dots, 1)$.

Let us explain why the set of double cosets is in bijection with the set of dominant coweights. Given any dominant coweight $\mu$ of $G$, there is an associated point $t^\mu$ in the (real) affine Grassmannian: to a coweight $\mu=(\mu_1, \dots, \mu_m)$ we associate the element of $G(\cK)$ with diagonal entries $t^{\mu_i}$, and then project to the affine Grassmanian $\Gr$ by the quotient map. Any two points $p$ and $q$ of the affine Grassmannian can be translated by an element of $G(\cK)$ to $t^0$ and $t^\mu$, respectively, for some unique dominant coweight $\mu$. This gives the identification of the double coset space with $\Lambda_+$.

Under this circumstance, we will write $$d(p,q) = \mu$$ and say that the distance from $p$ to $q$ is $\mu$. Let us collect some facts about this distance function $d$.  Note that this distance function is not symmetric; one can easily check that $$d(p,q)=-w_0 d(q,p)$$ where $w_0$ is the longest element of the Weyl group of $G$ (recall that the Weyl group acts on both the weight space $\Lambda^*$ and its dual, the coweight space $\Lambda$). However, there is a partial order on $\Lambda$ defined by $\lambda > \mu$ if $\lambda - \mu$ is positive (i.e., in the positive span of the positive co-roots). Under this partial ordering, the distance function satisfies a version of the triangle inequality. By construction, the action of $G(\cK)$ on the affine Grassmannian preserves this distance function.

We are interested in the affine Grassmanian, but not in its finer structure as a variety. In fact, we will only consider properties of the affine Grassmanian that depend on positivity and the above distance function. For this reason, we will introduce affine buildings, a combinatorial skeleton of the affine Grassmanian. Once we impose positivity conditions, all our constructions will only involve subsets of the affine building and the induced metric on them.

For any group $G$, the points of the associated affine Grassmannian $\Gr$ are a subset of the vertices of an simplicial complex called the \emph{affine building} $\Delta = \Delta(G)$. $\Delta(G)$ is associated with the extended Dynkin diagram of $G$. The simplices of this affine building correspond to parahoric subgroups of the affine Kac-Moody group $\widehat{G}$.

For this paper, we will be concerned with the affine building when $G=SL_m$ or $PGL_m$. Let us discuss these examples in more detail. The affine Grassmannian for $G=PGL_m$ consists of lattices (finitely generated, rank $m$ $\cO$-submodules of $\cK^m$) up to scale: two lattices $L$ and $L'$ are equivalent if $L=cL'$ for some $c \in \R((t))$. The set of vertices of the affine building for $PGL_m$ is precisely given by the points of the affine Grassmannian $\Gr(PGL_m)$.

For any lattices $L_0, L_1, \dots, L_k$, there is a $k$-simplex with vertices at $L_0, L_1, \dots, L_k$ if and only if (replacing each lattice by an equivalent one if necessary) $$L_0 \subset L_0 \subset \cdots \subset L_k \subset t^{-1}L_0.$$
This gives the affine building the structure of a simplicial complex. The affine building for $G=SL_n$ is the same simplicial complex, but where we restrict our attention to those vertices that come from the affine Grassmannian for $G=SL_n$.

The non-symmetric, coweight-valued metric we defined above descends from the affine Grassmannian to the affine building. The notion of a {\emph geodesic} with respect to his metric is sometimes useful. For our purposes, a geodesic in the building is a path that travels along edges in the building from vertex to vertex, such that the sum of the distances from vertex to vertex is minimal (with respect to the partial order defined above). It is a property of affine buildings that geodesics exist. Note that in general there will be many geodesics between two any points.

The building is a non-disjoint union of \emph{apartments}. One apartment is given by projecting the points of $T(\cK) \subset G(\cK)$ to $\Gr$. All other apartments are translates of this one. Each apartment, as a simplicial complex, is isomorphic to the Weyl alcove simplicial complex of $G$.

It is a fact that any two simplices of $\Delta$ of any dimension are both contained in at least one apartment. In particular, any two points in the building are contained in some apartment. Moreover, every geodesic between two points is contained inside every apartment containing those two points. Thus the set of geodesics between two points is completely determined by the distance between them.

\section{Laminations for the $\A$ space}

\subsection{Outline of the definition and construction of $\A$-laminations}

We will first consider the space of integral laminations $\A_{G,S}(\Zt)$ where $S$ is a disk with marked points on the boundary. For the purpose of orientation for the next few sections, in which we give the construction of positive virtual configurations of points in the building, we first give a conceptual outline. Nothing in this section is strictly necessary; the reader may choose to skip to section 5.2.

All the constructions here can be extended without difficulty to rational or real lamination spaces ($\A_{G,S}(\Qt)$ or $\A_{G,S}(\Rt)$). In the following, we will assume that $G=SL_m$.

We will begin by describing laminations on a disc with $n$ marked points. The arguments in \cite{FG1} show that understanding laminations on a surface reduces--via cutting, gluing and $\pi_1$-equivariance--to the case of laminations on a disc with 2, 3 or 4 marked points. We will build up to the following definition:

\begin{definition} A $G$-lamination on a disc with $n$ marked points is a virtual positive configuration of $n$ points in the affine building for $G$ up to equivalence.
\end{definition}

A configuration of $n$ points in the building is a set of $n$ labelled vertices of the affine building for $G$. We will study configurations up to equivalence. 

\subsubsection{Equivalence}\label{equivalence def} Equivalence will be the smallest equivalence relation generated by isometry and cutting and gluing. Thus we may define equivalence inductively. Let $p_1, \dots, p_n$ and $p'_1, \dots, p'_n$ be two configurations of points of the affine building. If they are to be equivalent, we first require that the pairwise distances between corresponding points are equal:

$$d(p_i,p_j)=d(p'_i,p'_j)$$
Define a {\em perimeter} of a configuration $p_1, \dots, p_n$ to be a union of some choice of geodesics between each $p_i$ and $p_{i+1}$, where indices are taken cyclically. Then, because
$$d(p_i,p_{i+1})=d(p'_i,p'_{i+1}),$$
we may choose a corresponding perimeter for $p'_1, \dots, p'_n$. Now let $a$ and $b$ be two points in the perimeter of the first configuration. Suppose that $a$ is on the geodesic between $p_i$ and $p_{i+1}$ and $b$ on the geodesic between $p_j$ and $p_{j+1}$. We may choose some geodesic between $a$ and $b$. Take the corresponding points $a'$ and $b'$ on the perimeter of $p'_1, \dots, p'_n$. Then we make a ``cut'' to form the configurations $$a, p_{i+1}, \dots, p_j, b$$ and $$b, p_{j+1}, \dots, p_i, a$$ and the corresponding configurations $$a', p'_{i+1}, \dots, p'_j, b'$$ and $$b', p'_{j+1}, \dots, p'_i, a'.$$ Then $p_1, \dots, p_n$ and $p'_1, \dots, p'_n$ are equivalent if and only if $a, p_{i+1}, \dots, p_j, b$ is equivalent to $a', p'_{i+1}, \dots, p'_j, b'$ and $b, p_{j+1}, \dots, p_i, a$ is equivalent to $b', p'_{j+1}, \dots, p'_i, a'$. Using cuts, we can reduce to the case of triangles with miniscule side lengths. Finally, we say that two such triangles are equivalent if their side lengths coincide. We will show that our definition of equivalence, when restriced to positive configurations, does not depend on the sequence of cuts in Proposition~\ref{tropical relations}. (This is no longer true if we remove the positivity assumption.) From now on, when we discuss configurations in the affine building, we will be tacitly be considering them up to equivalence.

\begin{rmk} It is likely that positivity allows us to strengthen our notion of equivalence. For example, we believe that the equivalence of two positive configurations $x_1, \dots, x_n$ and $x'_1, \dots, x'_n$ implies that their convex hulls are isometric, where the convex hull of $x_1, \dots, x_n$ is the smallest geodesically closed subset containing $x_1, \dots, x_n$. It is even possible that there is an isometry of the entire affine building which carries $x_1, \dots, x_n$ and $x'_1, \dots, x'_n$. It may be useful in applications to have a somewhat stronger notion of equivalence, though for this paper, the one given above is sufficient.
\end{rmk}

\subsubsection{Positivity} 

In this section we will try to give the idea behind the adjective ``positive.'' We will later describe how a configuration of $n$ points in $G/U((t))$ along with a set of $n$ ``large enough'' coweights gives rise to a configuration of points in the affine Grassmanian. Actually, a configuration of $n$ points in $G/U((t))$ along with a set of $n$ ``large enough'' coweights actually gives rise to a configurations of points in the affine flag variety (which we will denote $\Fl$), which is a bundle over the affine Grassmanian where the fibers of the bundle are flag varieties. We do not define the affine flag variety here, because it turns out not to be relevant for our purposes. However, we mention it because we believe that the affine flag variety will be useful for analyzing higher laminations for groups other than $SL_m$ and $PGL_m$, and even in the cases we consider, it may help some readers to keep it in mind.

We have the following maps:

$$\Conf_n G/U((t)) \times \Lambda^n \cdots \rightarrow \Conf_n \Fl \rightarrow \Conf_n \Gr \rightarrow \Conf_n \Delta(G)$$

Here the first map is only defined for a set of $n$ large enough $\Lambda^n$ (this is the reason we use the dotted arrow notation). It turns out that for any configuration in $\Conf_n G/U((t))$, there exist coweights $\mu_1, \mu_2, \dots, \mu_n$ such that whenever we have coweights $\lambda_1, \lambda_2, \dots, \lambda_n$, with $\lambda_i-\mu_i$ dominant, then $\lambda_1, \lambda_2, \dots, \lambda_n$ are large enough. Thus large enough coweights form a cone within the space of coweights.

The next step will be to describe positive configurations in $G/U((t))$. Positivity will subtly depend on the cyclic order of the $n$ points. The notion of positivity carries over to a notion of positive configurations in the affine flag variety, the affine Grassmanian and the affine building:

$$\Conf_n^+ G/U((t)) \times \Lambda^n \cdots \rightarrow \Conf_n^+ \Fl \rightarrow \Conf_n^+ \Gr \rightarrow \Conf_n^+ \Delta(G)$$

Positivity will allow us to deduce certain properties of these configurations of points in the affine building, for example the fact that tropical coordinates completely determine the configuration up to equivalence. The idea is that positivity ensures certain genericity properties of our configuration. More degenerate configurations of points in the affine building can be quite complicated, and classifying them seems to be a rather unwieldy problem \cite{CHSW}. On the other hand, positive configurations of points can be completely and explicitly understood. It will not do any harm to assume that all configurations of points henceforth are also positive.

\begin{note}\label{positive structure} Fock and Goncharov give a definition of positive configurations of points in the flag variety. Their definition should extend to a definition of positive configurations of points in the affine flag variety. (Which in turn should be related to positivity in loop groups \cite{LP}.) It would be interesting to compare our notion of positive configurations with these other notions of positivity.
\end{note}

\subsubsection{Virtual}

Our first steps towards understanding laminations involve understanding the space $\A_{G,S}(\cK_{>0})$ for $S$ a disk with $n$ marked points. This space is exactly $\Conf_n^+ G/U((t))$. Therefore we need to analyze how the above maps depend on the set of $n$ coweights in $\Lambda^n$. The subset of $\Lambda^n$ for which the maps

$$\Conf_n^+ G/U((t)) \times \Lambda^n \cdots \rightarrow \Conf_n^+ \Fl \rightarrow \Conf_n^+ \Gr \rightarrow \Conf_n^+ \Delta(G)$$
are defined is a product of cones inside each copy of $\Lambda$. Suppose that we have a configuration $F_1, F_2, \dots, F_n$ of $n$ points in $G/U((t))$ and a set of large enough coweights $\lambda_1, \lambda_2, \dots, \lambda_n$. Let this be mapped to the configuration of points $\tilde{x_1}, \tilde{x_2}, \dots, \tilde{x_n}$ in the affine flag variety, the configuration of points $x_1, x_2, \dots, x_n$ in the affine Grassmanian, and then the configuration of points $p_1, p_2, \dots, p_n$ inside the affine building.

As $\lambda_i$ varies within a cone of large enough values for which the map is defined, $\tilde{x_i}$ (respectively $x_i$ or $p_i$) varies as well. Thus each copy of $\Lambda$ moves around the corresponding point in the configuration of points in the affine flag variety (respectively the affine Grassmanian or the affine building) independently.

An alternative way to view this is that any configuration of points in $G/U((t))$ gives rise to a configuration of $n$ cones of points inside $\Fl, \Gr$ or $\Delta(G)$, where the points in these cones are parameterized by large enough coweights $\lambda_i$. 

In order to motivate the definition of \emph{virtual}, let us pause for a moment to explain the roles of the intermediate spaces $\Conf_n^+ \Fl$ and $\Conf_n^+ \Gr$.

It turns out that knowing the positive configurations of cones of points in $\Fl$ or $\Gr$ parameterized by large $\lambda_i$ is almost exactly the same information as a positive configuration of $n$ points in $G/U((t))$ (i.e., a point of $\Conf_n^+ G/U((t))$). This is, in turn, exactly the same information as a point in $\A_{G,S}(\cK_{>0})$. However, this contains too much information, as we are ultimately interested in the tropical points $\A_{G,S}(\Zt)$. Therefore, we would like to consider points of $\A_{G,S}(\cK_{>0})$ with the same valuation to be equivalent. The information contained in these valuations is exactly that which is captured by the geometry of the affine building (a precise statement can be found in ~\ref{tropical distance functions}).

For that reason, we would like to examine how a given positive configuration of flags $F_1, F_2, \dots, F_n$ (in other words, a point of $\Conf_n^+ G/U((t))$) along with a set of coweights $\lambda_1, \lambda_2, \dots, \lambda_n$ gives a configuration of points $p_1, p_2, \dots, p_n$ in the affine building $\Delta(G)$, and how the points $p_i$ move around as the $\lambda_i$ vary. 

For any dominant coweights $\mu_1, \mu_2, \dots, \mu_n$, let the positive configuration of flags $F_1, F_2,$ $\dots, F_n$ and the set of coweights $\lambda_1+\mu_1$, $\lambda_2+\mu_2, \dots,$ $\lambda_n+\mu_n$ get mapped to the configuration of points $q_1, q_2, \dots, q_n$ in $\Delta(G)$. We then would like to understand the relationship of $q_1, q_2, \dots, q_n$ to $p_1, p_2, \dots, p_n$. It turns out that up to the notion of equivalence defined above in ~\ref{equivalence def}, $q_1, q_2, \dots, q_n$ is completely determined by $p_1, p_2, \dots, p_n$ and $\mu_1, \mu_2, \dots, \mu_n$.

Thus we may view the different configurations $q_1, q_2, \dots, q_n$ that appear as the $\mu_i$ vary as being obtained by an action of the monoid $\Lambda_+^n$ on configurations of points in $\Delta(G)$. In other words, positive configurations in the affine building come equipped automatically with an action of $\Lambda_+^n$. (We will later see that there is an action of $\Lambda_+$ for every marked point and every hole on the surface $S$. Furthermore, this will extend to an action of the entire coweight lattice $\Lambda$ on the space of virtual configurations. Moreover, on a surface with holes, this extends to an action of an affine Weyl group at each puncture on the space of laminations, as we will show in future work \cite{Le}.) In order to define {\em virtual} configurations, we must understand this action.

Let us try to describe the action of $\Lambda_+^n$ intuitively. There is one factor $\Lambda_+$ acting on each of the $n$ points in the configuration. Let $p_1, p_2, \dots, p_n$ be a configuration of $n$ points. $\lambda \in \Lambda_+$ acts on the point $p_1$ by moving $p_1$ to another point $p_1'$ of the building which is distance $\lambda$ away, i.e. such that $$d(p_1, p_1')=\lambda.$$
There are of course many choices for $p_1'$; however, for the most generic choices we are moving $p_1$ a distance $\lambda$ away from all the other points in the configuration. It turns out all these choices are equivalent in the sense that the configurations $p_1', p_2, \dots, p_n$ will be equivalent for all such choices of $p_1'$.

A {\em virtual} positive configuration of points in the affine building is a set of $n$ pairs $(p_i, \lambda_i)$, $i=1, 2, \dots, n$, where $\lambda_i$ are all coweights and the $p_i$ form a positive configuration. Let $(p_i, \lambda_i)$ and $(q_i, \mu_i)$ be two virtual configurations.

Suppose that all the $\lambda_i$ and $\mu_i$ are dominant coweights. Then we may allow $(\lambda_1, \dots, \lambda_n)$ to act on $(p_1, \dots, p_n)$ and $(\mu_1, \dots, \mu_n)$ to act on $(q_1, \dots, q_n)$. Suppose the resulting configurations are $(p_1', \dots, p_n')$ and $(q_1', \dots, q_n')$. We will say that $(p_1', \dots, p_n')$  realizes the virtual configuration $(p_i, \lambda_i)$.

Then we will say that $(p_i, \lambda_i)$ and $(q_i, \mu_i)$ are equivalent virtual positive configurations if and only if $(p_1', p_2', \dots, p_n')$ and $(q_1', q_2', \dots, q_n')$ are equivalent as configurations. More generally, two configurations $(p_i, \lambda_i)$ and $(q_i, \mu_i)$ (where $\lambda_i$ and $\mu_i$ are coweights, but not necessarily dominant) are equivalent if and only if there exists $\nu_i$ such that $(p_i, \lambda_i+\nu_i)$ and $(q_i, \mu_i+\nu_i)$ are equivalent. For large enough $\nu_i$, the sets of coweights $\lambda_i+\nu_i$ and $\mu_i+\nu_i$ will be both dominant and large enough, so we can always verify whether two virtual positive configurations are equivalent.

Note that if the points $p_i$ form a positive configuration of points, there is the corresponding virtual positive configuration $(p_i,0)$. We will call such configurations ``actual'' configurations. To show that our notion of virtual configuration of points is well-defined, we need the following lemma which we shall prove after we have given a proper definition of virtual positive configurations:

\begin{lemma}\label{virtual equivalence} For any dominant coweights $\lambda_i$, the positive configurations $(p_i,0)$ and $(q_i,0)$ are equivalent if and only if $(p_i,\lambda_i)$ and $(q_i, \lambda_i)$ are equivalent.
\end{lemma}

\begin{cor} Two configurations $(p_i, \lambda_i)$ and $(q_i, \mu_i)$ (where $\lambda_i$ and $\mu_i$ are coweights, but not necessarily dominant) are equivalent if and only if for all sets of coweights $\nu_i$, $(p_i, \lambda_i+\nu_i)$ and $(q_i, \mu_i+\nu_i)$ are equivalent.
\end{cor}

We may finally refine our previous constructions. Before, we had a map 

$$\Conf_n^+ G/U((t)) \times \Lambda^n \cdots \rightarrow \Conf_n^+ \Delta(G)$$
defined for large enough coweights. Suppose that under this map we send 

$$(F_1, F_2, \dots, F_n) \times (\lambda_1, \lambda_2, \dots, \lambda_n) \rightarrow (p_1, p_2, \dots, p_n).$$

If it were possible, we would define a map from configurations of flags to configurations in the affine building
$$\Conf_n^+ G/U((t)) \cdots \rightarrow \Conf_n^+ \Delta(G),$$
that sends $(F_1, F_2, \dots, F_n)$ to the configuration in $\Delta(G)$ that comes from choosing the coweights $(0, 0, \dots, 0)$. However, this is not always possible, as $(0, 0, \dots, 0)$ may not be large enough. However, we may define a map from configurations of flags to virtual configurations of points in $\Delta(G)$
$$\Conf_n^+ G/U((t)) \rightarrow \Conf_n^{+,\textrm{vir}} \Delta(G),$$
by mapping 
$$(F_1, F_2, \dots, F_n) \rightarrow ((p_1,-\lambda_1), (p_2,-\lambda_2), \dots, (p_n,-\lambda_n)).$$

Finally, let us note that $n$ marked points on a disc come with the natural cyclic ordering. The properties of positive configurations in $G/U((t))$, $\Fl$, $\Gr$ and $\Delta(G)$ depend subtly upon this cyclic ordering of the $n$ marked points. Virtual positive configurations will attach a virtual point of the affine building to each of these $n$ marked points. These virtual positive configurations of points are precisely our notion of a higher lamination $S$ where $S$ is a disc with marked points. Our notion of higher lamination only depends on the surface $S$ and the group $G$. Rotations of a disc with marked points that permute the marked points on the boundary cyclically are a topological automorphism. Thus we expect that virtual positive configurations will have a natural cyclicity property: $(p_1,\lambda_1), \dots, (p_n,\lambda_n)$ is a positive virtual configuration if and only if every cyclic shift is a positive virtual configuration. We will see this to be the case.

This concludes the outline of the definition/construction of virtual positive configurations. We now make these definitions more precise. 

\subsection{Canonical Coordinates on Configurations in the Flag Variety and Affine Grassmanian}

Our goal in this section is to define some important functions on higher Teichmuller space and their proposed tropicalizations. These functions will be essential to all our later constructions. In all that follows, fix $G=SL_m$. We begin by recalling the canonical coordinates on a triple of principal flags for the group $SL_m$. A principal flag for $SL_m$ consists of a point in $G/U$, where $U$ is the subgroup of lower-triangular unipotent matrices; in concrete terms, we can write this as a set of $m$ vectors $v_1, \dots, v_m$ where we only care about the forms $$v_1 \wedge \dots \wedge v_k$$ for $k=1, 2, \dots, m-1$. We will require that $$v_1 \wedge \dots \wedge v_m$$ is the standard volume form. We are interested in the space of three generic flags up to the left translation action of $G$. Suppose we have three generic flags $F_1, F_2, F_3$ which are represented by $u_1, \dots, u_m$, $v_1, \dots, v_m$ and $w_1, \dots, w_m$ respectively. There is an invariant $f_{ijk}$ of this triple of flags for every triple of integers $i, j, k$ such that $i+j+k=m$ and each of $i, j, k$ is an integer strictly less than $m$. It is defined by $$f_{ijk}(F_1, F_2, F_3)=\det(u_1, u_2, \dots, u_i, v_1, v_2, \dots v_j, w_1, w_2, \dots, w_k),$$ and it is $G$-invariant by definition. Note that when one of $i, j, k$ is $0$, these functions only depend on two of the flags. We can call such functions {\em edge} functions, and the remaining functions {\em face} functions.

Given a cyclic configuration of $n$ flags, imagine the flags sitting at the vertices of an $n$-gon, and triangulate the $n$-gon. Then taking the edge and face functions on the edges and faces of this triangulation, we get a set of functions on a cyclic configuration of flags.

\begin{theorem} For any triangulation, the edge and face functions form a coordinate chart. Different triangulations yield different functions that are related to the original functions by a positive rational transformation (a transformation involving only addition, multiplication and division) \cite{FG1}.
\end{theorem}

We will now analogously define the triple distance functions $f_{ijk}^t$ on configuration of three points in the affine Grassmanian for $SL_m$. The functions $f_{ijk}^t$ are the same as the functions $H_{ijk}$, which were defined in a slightly different way in \cite{K}. Recall that the affine Grassmanian is given by $G(\cK)/G(\cO)$. For $G=SL_m$, a point in the affine Grassmanian can be thought of as a finitely generated, rank $m$ $\cO$-submodule of $\cK^m$ such that if $v_1, \dots, v_m$ are generators for this submodule, then $$v_1 \wedge \dots \wedge v_m=e_1 \wedge \dots \wedge e_m$$ where $e_1, \dots, e_m$ is the standard basis of $\cK^m$. Let $x_1, x_2, x_3$ be three points in the affine Grassmanian, thought of as $\cO$-submodules of $\cK^n$. We will consider the quantity $$-\val(\det(u_1, \dots, u_i, v_1, \dots v_j, w_1, \dots, w_k))$$ as $u_1, \dots, u_i$ range over elements of the $\cO$-submodule $x_1$, $v_1, \dots v_j$ range over elements of the $\cO$-submodule $x_2$, and $w_1, \dots, w_k$ range over elements of the $\cO$-submodule $x_3$. Define $f_{ijk}^t (x_1,x_2,x_3)$ as the maximum value of of this quantity, i.e., the larges value of $$-\val(\det(u_1, \dots, u_i, v_1, \dots v_j, w_1, \dots, w_k))$$ as all the vectors $u_1, \dots, u_i, v_1, \dots v_j, w_1, \dots, w_k$ range over elements of the respective $\cO$-submodules $x_1, x_2, x_3$.

There is a more invariant way to define $f_{ijk}^t$. Lift $x_1, x_2, x_3$ to elements $g_1, g_2, g_3$, of $G(\cK)$, then project to three flags $F_1, F_2, F_3 \in G/U(\cK)$. In some sense, we are lifting from $G(\cK)/G(\cO)$ to $G/U(\cK)$. Then define $f_{ijk}^t$.to be the maximum of $-\val(f_{ijk}(F_1,F_2,F_3))$ over the different possible lifts from $G(\cK)/G(\cO)$ to $G/U(\cK)$.

\begin{note} It is not hard to check that the edge functions recover the distance between two points in the affine Grassmanian (and hence also the affine building). More precisely, $f_{ij0}^t (x_1,x_2,x_3)$ is given by $\omega_j \cdot d(x_1,x_2)=\omega_i \cdot d(x_2,x_1)$.
\end{note}

\subsection{Main Definitions}

We can now give a definition of positive configurations in the affine Grassmanian.

\begin{definition} Let $x_1, x_2, \dots x_n$ be $n$ points of the affine Grassmanian. Then $x_1, x_2, \dots x_n$ will be called a positive configuration of points in the affine Grassmanian if and only if there exist ordered bases for $x_i$, 
$$v_{i1}, v_{i2}, \dots, v_{im}$$ such that for each $1 \leq p < q <r \leq n$, and each triple of non-negative integers $i, j, k$ such that $i+j+k=m$,
\begin{itemize}
\item $f_{ijk}^t (x_p,x_q,x_r) = -\val(\det(v_{p1}, \dots, v_{pi}, v_{q1}, \dots v_{qj}, v_{r1}, \dots, v_{rk}))$
\item the leading coefficient of $\det(v_{p1}, \dots, v_{pi}, v_{q1}, \dots v_{qj}, v_{r1}, \dots, v_{rk})$ is positive.
\end{itemize}
\end{definition}

This also gives us the corresponding notion of a positive configuration of points in the affine building.

\begin{definition} Let $p_1, p_2, \dots p_n$ be $n$ points of the affine building. Then $p_1, p_2, \dots p_n$ will be called a positive configuration of points in the affine building if and only if they are the image in the building of a positive configuration of points $x_1, x_2, \dots x_n$ in the affine Grassmanian.
\end{definition}

We can also define virtual configurations of points in the affine Grassmanian.

\begin{definition} A virtual positive configuration of $n$ points in the affine Grassmanian is a set of $n$ ordered pairs $(x_1,\lambda_1), \dots, (x_n,\lambda_n)$ where $x_1, \dots, x_n$ is a positive configuration of points in the affine Grassmanian, and the $\lambda_i$ are coweights.
\end{definition}

There is also a corresponding notion of a virtual positive configuration of points in the affine building.

\begin{definition} A virtual positive configuration of $n$ points in the affine building is a set of $n$ ordered pairs $(p_1,\lambda_1), \dots, (p_n,\lambda_n)$ where $p_1, \dots, p_n$ is a positive configuration of points in the affine building, and the $\lambda_i$ are coweights.
\end{definition}

Equivalence between positive configurations of $n$ points in the affine building is the smallest equivalence relation generated by isometry and cutting and gluing. Equivalence between virtual positive configurations is slightly more complicated.

If $(p_1,\lambda_1), \dots, (p_n,\lambda_n)$ is a virtual positive configuration of points in the affine building, and the coweights $\lambda_i$ are dominant, then we can associate to the virtual positive configuration $(p_1,\lambda_1), \dots, (p_n,\lambda_n)$ a positive configuration of points $p_1', \dots, p_n'$ as follows: Let $x_1, \dots x_n$ be the positive configuration of points in the affine Grassmanian giving rise to $p_1, \dots, p_n$, and let 

$$v_{i1}, v_{i2}, \dots, v_{im}$$
be a basis for $x_i$ as in the definition above, and let $x_i'$ be the lattice spanned by

$$t^{-\lambda_{i1}}v_{i1}, t^{-\lambda_{i2}}v_{i2}, \dots, t^{-\lambda_{im}}v_{im}.$$
Then $p_1', \dots, p_n'$ is defined to be the image of $x_1', \dots, x_n'$ in the affine building. We will see that $p_1', \dots, p_n'$ is again a positive configuration of points in the affine building. The definition of $p_1', \dots, p_n'$ seems to depend on the choice of the ordered bases of $x_i$, but we will see that different choices give equivalent configurations.

Now let $\lambda_i$ and $\mu_i$ be two sets of dominant coweights. Let two virtual positive configurations $(p_i, \lambda_i)$ and $(q_i, \mu_i)$ correspond to the positive configurations $p_i'$ and $q_i'$. Then we will say that $(p_i, \lambda_i)$ and $(q_i, \mu_i)$ are equivalent if and only if $p_i'$ and $q_i'$ are equivalent.

Finally, for $\lambda_i$ and $\mu_i$ any coweights, we will say that $(p_i, \lambda_i)$ and $(q_i, \mu_i)$ are equivalent if and only if for some large dominant coweights $\nu_i$, $(p_i, \lambda_i+\nu_i)$ and $(q_i, \mu_i+\nu_i)$ are equivalent.

Virtual positive configuration of $n$ points in the affine building up to equivalence are what we will define as laminations on a disc with $n$ marked points.

\begin{definition} An $\A$-lamination on a disc with $n$ marked points is a virtual positive configuration of $n$ points in the affine building up to equivalence.
\end{definition}

\subsection{Positive Configurations}

We now move on to our central construction of positive configurations of $n$ points in the affine Grassmanian, and thus positive configurations of $n$ points in the affine building. The main task involves giving a map

$$\Conf_n^+ G/U((t)) \times \Lambda^n \cdots \rightarrow \Conf_n^+ \Gr$$
which is defined for a set of large enough coweights in $\Lambda^n$. Our task will be to give criterion for ``large enough,'' and to show that when this criterion is satisfied, our map is well-defined.

For this section, let $S$ be the disc with $n$ marked points on the boundary. The associated higher Teichmuller space $\A_{G,S}$ consists of configurations of $n$ principal flags. Recall that $\cK_{>0}$ is the ring of positive Laurent series, or Laurent series with positive leading term. We will consider the set $\A_{G,S}(\cK_{>0})$. This set consists of configurations of $n$ principal flags $F_1, F_2, \dots, F_n$  in $\cK^m$. For each flag $F_i$, choose a lift to $G(\cK)=SL_m(\cK)$. Call this lift $g_i$. The columns of $g_i$ will be $v_{i1}, \dots , v_{im}$, where $v_{i1} \wedge \dots \wedge v_{ik}$ for $k=1, 2, \dots, m-1$ will be the successive subspaces (with volume form) in the flag.

One naive guess would be to associated to the flag $F_i$ the $\cO$-submodule of $\cK^m$ spanned by $v_{i1}, \dots, v_{im}$, which would then give us an element of $\Gr(G)$. This of course is not well-defined, as it would depend on our choice of lift $g_i$. However, there is a way to fix this by choosing a set of large enough coweights.

Let $\lambda$ be any coweight. For $SL_m$, a coweight is an ordered set of $m$ integers that sum to $0$. Let $\lambda=(\lambda_1,\lambda_2, \dots, \lambda_m)$. For our purposes, $\lambda$ will be large exactly when $\lambda_i-\lambda_{i+1}$ is large for $i=1, 2, \dots, m-1$. In other words, $\lambda$ is large when the pairing of $\lambda$ with each of the positive roots of $G$ is large.

For each flag $F_i$ choose a lift $v_{i1}, \dots, v_{im}$. Suppose we have some other lift $v_{i1}', \dots, v_{im}'$.

\begin{lemma}\label{independent of lift} For some large enough coweights $\lambda_i=(\lambda_{i1}, \dots, \lambda_{in})$, the vectors $$t^{-\lambda_{i1}}v_{i1}, t^{-\lambda_{i2}}v_{i2}, \dots, t^{-\lambda_{im}}v_{im}$$ generate the same $\cO$-module as $$t^{-\lambda_{i1}}v_{i1}', t^{-\lambda_{i2}}v_{i2}', \dots, t^{-\lambda_{im}}v_{im}'.$$ (Recall that a coweight for $SL_m$ consists of an $m$-tuple of integers that sum to $0$. A ``large'' coweight is one where the this $m$-tuple is decreasing, and the gaps between the integers are large.)
\end{lemma}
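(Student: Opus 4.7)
The plan is to parameterize the ambiguity in the lift via the action of $U(\cK)$, then choose $\lambda_i$ large enough so that the resulting correction matrix lands in $G(\cO)$ and hence preserves the generated $\cO$-module. Since both lifts $v_{i1},\dots,v_{im}$ and $v_{i1}',\dots,v_{im}'$ come from the same principal flag $F_i \in G/U(\cK)$, they differ by right multiplication by a unique $u \in U(\cK)$. Unipotency plus the triangular structure of $U$ yields relations of the form
$$ v_{ij}' = v_{ij} + \sum_{k} u_{kj}\, v_{ik}, $$
where the summation index $k$ runs only in the triangular direction dictated by $U$ (so, in the paper's convention, only over those $k$ for which $\lambda_{ik} > \lambda_{ij}$ in a ``large'' decreasing tuple), and the coefficients $u_{kj}$ lie in $\cK$.

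Next I rescale. Setting $w_{ij} := t^{-\lambda_{ij}} v_{ij}$ and $w_{ij}' := t^{-\lambda_{ij}} v_{ij}'$, the relation above becomes
$$ w_{ij}' = w_{ij} + \sum_{k} \bigl( u_{kj}\, t^{\lambda_{ik} - \lambda_{ij}} \bigr)\, w_{ik}. $$
Each $u_{kj}$ is a fixed element of $\cK$ with some finite valuation, and there are only finitely many such coefficients (at most $m^2$ per flag). As soon as $\lambda_{ik} - \lambda_{ij} \geq -\val(u_{kj})$ for every occurring pair $(k,j)$, each modified coefficient $u_{kj}\, t^{\lambda_{ik}-\lambda_{ij}}$ lies in $\cO$. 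Hence $w_{ij}'$ lies in the $\cO$-module generated by $w_{i1},\dots,w_{im}$, giving one inclusion.

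For the reverse inclusion, I apply the same argument to $u^{-1} \in U(\cK)$, whose entries are polynomial expressions in the entries of $u$ and therefore also have finite valuations. After enlarging $\lambda_i$ still further if needed — again only finitely many valuations are involved — the symmetric inclusion holds, and the two $\cO$-modules coincide.

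I do not expect any real obstacle. The content is simply that dominance of $\lambda_i$ matched against the triangular direction of $U$ converts the $\cK$-coefficients $u_{kj}$ into $\cO$-coefficients, and the argument is symmetric in the two lifts. The only care required is to ensure the bound on gaps is uniform over the finitely many entries of the transition matrix, which is automatic.
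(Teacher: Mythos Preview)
Your proof is correct and follows essentially the same route as the paper: both identify the transition between lifts as a unipotent triangular element $u \in U(\cK)$, observe that rescaling by $t^{-\lambda_i}$ amounts to conjugating $u$ by the diagonal $t^{\lambda_i}$, and then choose the gaps of $\lambda_i$ larger than the (finitely many) valuations appearing in $u$ and $u^{-1}$ so that the conjugated element lands in $G(\cO)$. You spell out the coordinate relation and the two inclusions more explicitly, but the content is identical to the paper's argument.
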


\begin{proof} This is fairly straightforward. We show this statement separately for each flag. Different lifts from $G/U^-(\cK)$ to $G(\cK)$ differ by some element in $U^-$. In other words, $v_{i1}, \dots, v_{im}$ and $v_{i1}', \dots, v_{im}'$ are related by some lower triangular matrix $u$ with entries in $\cK$. If the entries of $u$ and $u^{-1}$ have entries such that all their valuations are greater than $-C$ (where $C > 0$), then if we choose $\lambda_i$ large enough that its gaps $\lambda_{ij}-\lambda_{ij+1}$ are all greater than $C$, then the vectors $$t^{-\lambda_{i1}}v_{i1}, t^{-\lambda_{i2}}v_{i2}, \dots, t^{-\lambda_{im}}v_{im}$$ and the vectors $$t^{-\lambda_{i1}}v_{i1}', t^{-\lambda_{i2}}v_{i2}', \dots, t^{-\lambda_{im}}v_{im}'$$ generate the same $\cO$-module.

In more invariant terms, for such a $\lambda_i$, conjugating $u$ by $t^{\lambda_i}$ will give us an element of $G(\cO)$. This more invariant argument works for groups other than $SL_m$.

\end{proof}

The motivation for this construction can be explained as follows. A configuration of $n$ principal flags is a configuration of $n$ points of $G/U^-$ up to a diagonal action of $G$. Here by convention $U^-$ is the group of unipotent lower triangular matrices. But on this space there is a right action by $T$ on each principal flag, as $T$ is in the normalizer of $U^-$. On the space of positive configurations $\A_{G,S}(\R_{>0})$, there is an action of $T(\R_{>0})^n$. This action can be thought of as changing the horocycle (for $SL_2$, this is exactly changing the horocycle at a cusp of a hyperbolic Riemann surface). Analogously, on $\A_{G,S}(\cK_{>0})$ there is an action of $T(\cK_{>0})^n$, with one copy of $T(\cK_{>0})$ acting at each point. Inside $T(\cK_{>0})$ we have the elements $t^{\lambda}$ for $\lambda \in \Lambda$, the coweight lattice. We will take the convention that the action of $\lambda_i$ will be by multiplication by $t^{-w_0(\lambda_i)}$ on the right, which takes the flag $F_i$ given by the vectors $$v_{i1}, \dots, v_{im}$$ to the flag $$t^{-\lambda_{i1}}v_{i1}, \dots, t^{-\lambda_{im}}v_{im}.$$ Denote the resulting flag $F_i \cdot \lambda_i$. We will see that this action is the action of $\Lambda^n$ on the space of positive configurations of $n$ points in the building.

Thus although there is no sensible map from configurations in $G/U(\cK)$ to configurations in $G(\cK)/G(\cO)$, we can define a map up to some choice of lifts $v_{i1}, \dots, v_{im}$ and some choice of ``large enough'' coweights $\lambda_i$. Our strategy will then be to assign to $F_i$ the $\cO$-submodule of $\cK^m$ spanned by $$t^{-\lambda_{i1}}v_{i1}, \dots, t^{-\lambda_{im}}v_{im}.$$
Our task then becomes defining a notion of ``large enough.'' The whole construction of using a large enough coweight to associate to a point of the principal flag variety $G/U(\cK)$ a point of the affine Grassmanian $G(\cK)/G(\cO)$ can be thought of as a reverse procedure to the lifting from $G(\cK)/G(\cO)$ to $G/U(\cK)$ that we used to define the functions $f_{ijk}^t$.

Now given the configurations of $n$ principal flags $F_1, \dots, F_n$  in $\cK^m$, choose a lift $v_{i1}, \dots, v_{im}$ of each $F_i$, and choose a triangulation of the $n$-gon. We will say that the lifts $v_{i1}, \dots, v_{im}$ are {\em good} if for any triangle of flags $F_p, F_q, F_r$,

$$f_{ijk}^t (x_p,x_q,x_r)=-\val(\det(v_{p1}, \dots, v_{pi}, v_{q1}, \dots. v_{qj}, v_{r1}, \dots, v_{rk}))$$
where $x_p$ is the $\cO$-module spanned by $v_{p1},v_{p2}, \dots, v_{pm}$ and similarly for $x_q$ and $x_r$. 

In other words, the lifts $v_{i1}, \dots, v_{im}$ are good if $$-\val(f_{ijk}(F_p,F_q,F_r))=f_{ijk}^t (x_p,x_q,x_r)$$
or if they realize the maximum of minus the valuation of all the $f_{ijk}$ simultaneously. Notice that the definition of good lifts is dependent on the family of functions $f_{ijk}$. That we use these particular functions (which come from cluster algebras) will later become important, as they have certain crucial positivity properties under multiplication and addition.

Of course, it is quite unlikely to have good lifts in general. But just as before, we are saved by the the action of $\Lambda$:

\begin{lemma}\label{large enough} We can choose $\lambda_i$ large enough such that for each triangle of flags $F_p, F_q, F_r$ in our triangulation of the $n$-gon and every $i, j, k$ with $i+j+k=m$, 

\[ f_{ijk}^t (x_p,x_q,x_r) = -\val(\det(t^{-\lambda_{p1}}v_{p1},  \dots, t^{-\lambda_{pi}}v_{pi}, t^{-\lambda_{q1}}v_{q1}, \dots, t^{-\lambda_{qj}}v_{qj}, t^{-\lambda_{r1}}v_{r1}, \dots, t^{-\lambda_{rk}}v_{rk})) \] 
where now $x_p$ is the $\cO$-module spanned by $$t^{-\lambda_{p1}}v_{p1}, \dots, t^{-\lambda_{pm}}v_{pm}$$ and similarly for $x_q$ and $x_r$. In other words, we act upon the flags $F_i$, so that the vectors $$t^{-\lambda_{i1}}v_{i1}, \dots, t^{-\lambda_{im}}v_{im}$$ are a good lift of $F_i \cdot \lambda_i$ and span the $\cO$-module $x_i$. Then we will have $$-\val(f_{ijk}(F_p \cdot \lambda_p,F_q \cdot \lambda_q,F_r \cdot \lambda_r))=f_{ijk}^t (x_p,x_q,x_r).$$
\end{lemma}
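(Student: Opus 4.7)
The plan is to derive an explicit tropical formula for $f_{ijk}^t(x_p, x_q, x_r)$ by expanding the determinant, and then to argue that for $\lambda_p, \lambda_q, \lambda_r$ sufficiently dominant with large gaps, the initial-segment term dominates the resulting tropical maximum. Write $w_{pa} = t^{-\lambda_{pa}} v_{pa}$, so that $x_p$ is the $\cO$-module spanned by $w_{p1}, \dots, w_{pm}$. Any $i$-tuple $u_1, \dots, u_i \in x_p$ arises from these generators by an $m \times i$ matrix $C_p$ with entries in $\cO$, and similarly for $x_q, x_r$. Expanding the determinant by multilinearity and using that the entries of $C_p, C_q, C_r$ have non-negative valuation yields the upper bound
\[
f_{ijk}^t(x_p, x_q, x_r) \le \max_{I_p, I_q, I_r}\Bigl[\sum_{b\in I_p}\lambda_{pb} + \sum_{b\in I_q}\lambda_{qb} + \sum_{b\in I_r}\lambda_{rb} - \val\det(v_{pI_p}, v_{qI_q}, v_{rI_r})\Bigr],
\]
where $I_p, I_q, I_r$ range over increasing subsets of $\{1, \dots, m\}$ of sizes $i, j, k$, and $v_{pI_p}$ denotes the corresponding selection of columns from the fixed lift of $F_p$. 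The matching lower bound comes from choosing each $C$ to be the permutation matrix selecting one particular $(I_p, I_q, I_r)$: this isolates a single term of the expansion, and taking the supremum over all such choices recovers the maximum on the right.

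The RHS of the lemma's equation is precisely the term of this maximum corresponding to the initial segments $\{1,\dots,i\}, \{1,\dots,j\}, \{1,\dots,k\}$. For any other $(I_p, I_q, I_r)$, if $\lambda_p$ is strictly dominant then the $\ell$-th element $I_p(\ell)$ of $I_p$ satisfies $I_p(\ell) \ge \ell$, giving
\[
\sum_{\ell=1}^i \lambda_{p\ell} - \sum_{b \in I_p} \lambda_{pb} = \sum_{\ell=1}^i \bigl(\lambda_{p\ell} - \lambda_{p, I_p(\ell)}\bigr) \ge 0,
\]
and this gap can be made arbitrarily large in the successive differences $\lambda_{p,\ell} - \lambda_{p,\ell+1}$ whenever $I_p \ne \{1,\dots,i\}$, with analogous bounds at $q$ and $r$. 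The valuations $\val\det(v_{pI_p}, v_{qI_q}, v_{rI_r})$ range over finitely many fixed values, so taking $\lambda_p, \lambda_q, \lambda_r$ sufficiently dominant with large gaps forces the initial-segment term to strictly exceed all other terms in the max. Since only finitely many triples $(i,j,k)$ with $i+j+k=m$ and finitely many triangles of the triangulation are involved, a single choice of coweights $\lambda_1, \dots, \lambda_n$---one for each vertex of the $n$-gon, each large enough to satisfy all the conditions involving that vertex---achieves the lemma simultaneously everywhere.

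The main technical point is the derivation of the tropical formula for $f_{ijk}^t$. The multilinear expansion of the determinant is a sum over many terms, and leading-term cancellations can occur; however, the supremum in the definition of $f_{ijk}^t$ is taken over all lifts, and by specialising the matrices $C_p, C_q, C_r$ to permutation matrices one isolates a single summand and realises any given tropical term, so the supremum is indeed attained term by term. The bound $\val(\sum) \ge \min\val$ gives the reverse inequality, yielding the formula. A secondary subtlety is that the RHS of the lemma is only finite when the initial-segment minor $\det(v_{p\{1,\dots,i\}}, v_{q\{1,\dots,j\}}, v_{r\{1,\dots,k\}})$ is non-zero; for positive configurations this minor is one of the coordinates in $\cK_{>0}$, hence non-vanishing, so the equation is meaningful.
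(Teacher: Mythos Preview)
Your proof is correct and follows essentially the same approach as the paper's own argument: both identify $f_{ijk}^t(x_p,x_q,x_r)$ with the maximum, over subsets $I_p,I_q,I_r$ of sizes $i,j,k$, of $-\val$ of the corresponding subset determinant, and then observe that for sufficiently dominant $\lambda$'s the initial-segment term dominates. Your version is more detailed---you explicitly justify both inequalities in the tropical formula via the permutation-matrix lower bound and the $\val(\sum)\ge\min\val$ upper bound, and you address the finiteness of triangles and of triples $(i,j,k)$ as well as the nonvanishing of the initial-segment minor from positivity---whereas the paper simply asserts the max-formula and the dominance of the initial term.
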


\begin{proof} The different spans $x_p, x_q, x_r$ vary as we change the $\lambda_i$. Consider all the different possible values for minus the valuation of the determinant of some subset of $i$ vectors among $$t^{-\lambda_{p1}}v_{p1}, \dots, t^{-\lambda_{pm}}v_{pm},$$ some subset of $j$ vectors among $$t^{-\lambda_{q1}}v_{q1}, \dots, t^{-\lambda_{qm}}v_{qm},$$ and some subset of $k$ vectors among $$t^{-\lambda_{r1}}v_{r1}, \dots, t^{-\lambda_{rm}}v_{rm}.$$ Observe that for any choice of $\lambda_i$, $f_{ijk}^t (x_p,x_q,x_r)$ is the maximum of all these values, as the determinant of any $i$ vectors in $x_p$, $j$ vectors in $x_q$, and $k$ vectors in $x_r$ is a linear combination of the determinants considered above.

However, as the $\lambda_i$ get large, $\lambda_{ij}-\lambda_{i {j+1}}$ all get large simultaneously, so that the valuation of 
$$\det(t^{-\lambda_{p1}}v_{p1}, \dots, t^{-\lambda_{pi}}v_{pi}, t^{-\lambda_{q1}}v_{q1}, \dots, t^{-\lambda_{qj}}v_{qj}, t^{-\lambda_{r1}}v_{r1}, \dots, t^{-\lambda_{rk}}v_{rk})$$ gets negative the fastest among determinants of sets of $i$, $j$ and $k$ vectors from $x_p, x_q$ and $x_r$, respectively. Thus for large enough $\lambda_i$, $$-\val(\det(t^{-\lambda_{p1}}v_{p1}, \dots, t^{-\lambda_{pi}}v_{pi}, t^{-\lambda_{q1}}v_{q1}, \dots, t^{-\lambda_{qj}}v_{qj}, t^{-\lambda_{r1}}v_{r1}, \dots, t^{-\lambda_{rk}}v_{rk}))$$ will be the largest value among the different negative valuations of the determinants. Thus this value will indeed be $f_{ijk}^t (x_p,x_q,x_r)$, and we have our claim.

\end{proof}

We observe here that if $F_i$ is a positive configuration of flags in $G/U((t))$, then $F_i \cdot \lambda_i$ will also be a positive configuration of flags for any choice of $\lambda_i$. Now we can define positive configurations in the affine Grassmanian. Given a positive configuration of flags $F_1, F_2, \dots, F_n$ coming from a point in $\A_{G,S}(\cK_{>0})$, we choose some lifts $v_{i1}, \dots, v_{im}$ of the $F_i$ and choose a triangulation of the $n$-gon. Then taking $\lambda_i$ large enough to give us good lifts as in Lemma~\ref{large enough}, we can then obtain the points $x_1, x_2, \dots, x_n \in G(\cK)/G(\cO)$ also as above. We will call configurations of points in the affine Grassmanian that arise in this way {\em positive configurations of points}.

\subsubsection{Dependence on choices} We now must analyze how this construction depended on various choices, for example, the choice of lifts, the choice of triangulations or the choice of $\lambda_i$. We will consider these each in turn.

It turns out that the choice of lifts $v_{i1}, \dots, v_{im}$ only affects the choice of $\lambda_i$. This will be our next lemma. First, note that if $t^{-\lambda_{i1}}v_{i1}, \dots, t^{-\lambda_{im}}v_{im}$ is a good lift of $F_i \cdot \lambda_i$, then replacing the $\lambda_i$ with any set of larger coweights will still give us a good lift.

\begin{lemma}\label{independence of lift} If we have two different sets of lifts $v_{i1},\dots, v_{im}$ and $v_{i1}', \dots, v_{im}'$, and some $\lambda_i$ such that $t^{-\lambda_{i1}}v_{i1}, \dots, t^{-\lambda_{im}}v_{im}$ and $t^{-\lambda_{i1}}v_{i1}', \dots, t^{-\lambda_{im}}v_{im}'$ are both good lifts of $F_i \cdot \lambda_i$, then in fact both sets of vectors span the same $\cO$-modules.
\end{lemma}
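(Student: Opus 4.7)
The plan is to reduce the statement to a valuation bound on the matrix relating the two lifts and then show this bound is forced by the goodness hypothesis. Write $g_i$ and $g_i'$ for the matrices in $G(\cK)$ whose columns are the two sets of lifts. Since they represent the same principal flag $F_i$, we have $g_i' = g_i u_i$ for a unique $u_i \in U(\cK)$ in the unipotent radical. The $\cO$-modules in question are
$$x_i \;=\; g_i t^{-\lambda_i}\cdot \cO^m, \qquad x_i' \;=\; g_i t^{-\lambda_i}\bigl(t^{\lambda_i} u_i t^{-\lambda_i}\bigr)\cdot \cO^m,$$
so the identity $x_i = x_i'$ is equivalent to the conjugate $t^{\lambda_i} u_i t^{-\lambda_i}$ lying in $G(\cO)$, which in turn translates into the entrywise estimate $\val\bigl((u_i)_{jk}\bigr) \geq \lambda_{ik}-\lambda_{ij}$ for every pair $j < k$ (the diagonal entries of $u_i$ are $1$ automatically).

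The key observation is that the right-hand side of the goodness equation is a flag invariant, not a lift invariant. Indeed, for any triangle $(p,q,r)$ in the triangulation containing $i$ and any $(a,b,c)$ with $a+b+c = m$, the wedge form
$$v_{p1} \wedge \cdots \wedge v_{pa} \wedge v_{q1} \wedge \cdots \wedge v_{qb} \wedge v_{r1} \wedge \cdots \wedge v_{rc}$$
depends only on the partial wedge products defining the principal flags, and these are preserved when we swap $v_{ij}$ for $v_{ij}'$. Since both $x_i$ and $x_i'$ are good, the goodness equation gives the same numerical value with either lift, so we deduce
$$f_{abc}^t(x_i, x_q, x_r) \;=\; f_{abc}^t(x_i', x_q, x_r)$$
for every $(a,b,c)$ and every triangle of the triangulation incident to $i$.

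From here the argument proceeds by contradiction. Suppose some entry $(u_i)_{jk}$ fails the bound, so $\val((u_i)_{jk}) < \lambda_{ik} - \lambda_{ij}$, and fix $(j,k)$ with this property minimally placed in the upper-triangular order. The generator $t^{-\lambda_{ij}} v_{ij}'$ then expands as $t^{-\lambda_{ij}} v_{ij}$ plus a term $(u_i)_{jk} t^{\lambda_{ik} - \lambda_{ij}}\cdot (t^{-\lambda_{ik}} v_{ik})$ whose coefficient has strictly negative valuation, together with other contributions controlled by the minimality of $(j,k)$. Substituting this generator into an appropriate determinant $\det(\cdots v_{ij}' \cdots)$ built from generators of $x_q$ and $x_r$, one produces a vector of $x_i'$ whose valuation falls strictly below the supposed maximum $f_{abc}^t(x_i,x_q,x_r)$, contradicting the equality obtained in the previous paragraph. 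The main obstacle is the bookkeeping in this last step: for each potentially bad pair $(j,k)$, one has to exhibit a choice of $(a,b,c)$ and of vectors from $x_q, x_r$ such that the offending determinant survives as a genuine witness rather than being killed by cancellation. This is exactly where positivity of the underlying flag configuration is used, since it ensures that the maxima defining the $f_{abc}^t$ are attained without sign cancellations, so the introduction of a single vector of strictly smaller valuation really does drop the valuation of the determinant.
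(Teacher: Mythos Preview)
Your argument follows the same route as the paper's: reduce to showing that the rescaled change-of-basis matrix $t^{\lambda_i} u_i t^{-\lambda_i}$ lies in $G(\cO)$, and derive a contradiction from a bad entry by substituting a generator into a determinant and beating the value $f^t_{abc}$. The paper's proof is just a terse version of this: it observes that if, after rescaling, some relation $t^{-\lambda_{1k}}v'_{1k}=t^{-\lambda_{1k}}v_{1k}+a\,t^{-\lambda_{1j}}v_{1j}$ holds with $\val(a)<0$, then swapping $t^{-\lambda_{1j}}v'_{1j}$ for $t^{-\lambda_{1k}}v'_{1k}$ in the good-lift determinant lowers the valuation, so the primed system cannot be a good lift. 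Your explicit isolation of the flag-invariance of the wedge (hence the equality $f^t_{abc}(x_i,\dots)=f^t_{abc}(x'_i,\dots)$) is nice and is exactly the fact the paper uses implicitly.

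One correction: your closing claim that positivity is ``exactly where'' one avoids cancellation is unnecessary, and the paper does not invoke it. After the substitution, the determinant expands as $a\cdot(\text{good-lift determinant})$ plus terms that are determinants with non-initial vector choices; the latter all have $-\val$ at most $f^t_{abc}$ by the definition of $f^t_{abc}$ as a maximum, while the former has $-\val$ equal to $-\val(a)+f^t_{abc}>f^t_{abc}$. Since the valuations differ, the term of smallest valuation survives in the sum with no possibility of cancellation at leading order. So the contradiction goes through by pure valuation arithmetic, independent of any positivity of the configuration. (Your ``minimally placed $(j,k)$'' device handles the multi-entry case cleanly for the same reason.) Also, a small convention mismatch: the paper takes $U^-$ lower triangular, so the index pattern in the displayed relation is the transpose of yours, but this is cosmetic.
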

\begin{proof} The two sets of vectors $$t^{-\lambda_{i1}}v_{i1}, \dots, t^{-\lambda_{im}}v_{im}$$ and $$t^{-\lambda_{i1}}v_{i1}', \dots, t^{-\lambda_{im}}v_{im}'$$ differ by lower triangular matrices $U_i$ which take the former to the latter. The entries of these matrices must be in $\cO$, otherwise the sets of vectors could not both be good lifts.

For example, suppose $j < k$ and $t^{-\lambda_{1k}}v_{1k}'=t^{-\lambda_{1k}}v_{1k} + a \cdot t^{-\lambda_{1j}}v_{1j}$, where $a$ has negative valuation. Then if $t^{-\lambda_{i1}}v_{i1},\dots, t^{-\lambda_{im}}v_{im}$ is a good lift, then replacing any occurence of $t^{-\lambda_{1j}}v_{1j}'$ by $t^{-\lambda_{1k}}v_{1k}'$ in some determinant expression will result in a smaller valuation, so that $t^{-\lambda_{i1}}v_{i1}', \dots, t^{-\lambda_{im}}v_{im}'$ won't be a good lift.
\end{proof}

We now analyze the dependence on the triangulation. We have the following:

\begin{lemma}\label{independent of triangulation} If $F_i \cdot \lambda_i$ comes from a positive configuration and has good lifts $$t^{-\lambda_{i1}}v_{i1}, \dots, t^{-\lambda_{im}}v_{im}$$ for some triangulation of the $n$-gon, this lift remains good for any other triangulation. In other words, if we change the triangulation of the $n$-gon, we do not need to change the $\lambda_i$.
\end{lemma}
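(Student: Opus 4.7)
The plan is to reduce to the case of a single flip and then transport the good-lift condition through the positive cluster structure. Any two triangulations of the $n$-gon are connected by a sequence of flips, so it suffices to treat a flip that replaces the diagonal $F_pF_r$ of a sub-quadrilateral $F_pF_qF_rF_s$ by the other diagonal $F_qF_s$. Triangles outside the quadrilateral appear in both triangulations, so the good-lift property is inherited there; only the two new faces $(p,q,s)$ and $(q,r,s)$ need verification, with the same choice of lifts $t^{-\lambda_{ij}}v_{ij}$ and the same coweights $\lambda_i$.

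By \cite{FG1} the face functions of a triangulation are cluster coordinates on $\A_{G,S}$, and a flip produces a subtraction-free rational change of variables between the corresponding coordinate systems. Since $(F_1,\ldots,F_n)\in \A_{G,S}(\cK_{>0})$, all face-function values lie in $\cK_{>0}$, and the tropicalization rules for $-\val$ from Section~3 yield a fixed $\max/+/-$ expression $\Phi$ satisfying
$$-\val\bigl(f_{ijk}(\text{new face})\bigr) \;=\; \Phi\bigl(-\val f_{abc}(\text{old faces})\bigr).$$
By the good-lift hypothesis for $T$, the old values $-\val f_{abc}(\text{old face})$ equal the intrinsic tropical distances $f_{abc}^t(\text{old triple})$. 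Hence the left-hand side equals $\Phi\bigl(f_{abc}^t(\text{old triples})\bigr)$. The lemma therefore reduces to the parallel intrinsic identity
$$f_{ijk}^t(\text{new triple}) \;=\; \Phi\bigl(f_{abc}^t(\text{old triples})\bigr),$$
which, combined with the displayed formula, yields exactly the good-lift condition for $T'$.

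The inequality $\geq$ in this intrinsic identity is immediate: the good lift for $T$ is a single choice of vectors in $x_p, x_q, x_r, x_s$ realizing every old $f^t$-value simultaneously, so its new-face determinant has valuation $\Phi(f^t(\text{old}))$, which is a competitor in the $\max$ defining $f^t$ on the new face. The main obstacle is the reverse inequality $\leq$: ruling out the possibility that some alternate lift of the affine Grassmannian points produces a larger $-\val$ than $\Phi$ predicts. Because $\Phi$ contains subtractions coming from tropicalized divisions in the cluster mutation, naive monotonicity is unavailable. The route I would take is to use that the cluster mutation is an involution on $\A_{G,S}$, so $\Phi$ admits a subtraction-free max-plus inverse $\Phi^{-1}$ of the same form; any lift producing a strictly larger value on a new face would, upon applying $\Phi^{-1}$ to the resulting tropical data, force a correspondingly larger value on some old face function than $f_{abc}^t(\text{old})$, contradicting the definition of $f^t$ on the old triples as the maximum over all lifts.
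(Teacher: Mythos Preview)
Your reduction to a single flip and the argument for $f^t(\text{new}) \geq \Phi\bigl(f^t(\text{old})\bigr)$ are correct and essentially match the paper. The gap is in your proof of the reverse inequality $f^t(\text{new}) \leq \Phi\bigl(f^t(\text{old})\bigr)$.

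The involution argument does not close. Take a lift $L'$ of $x_p,x_q,x_r,x_s$ with $-\val\bigl(f_{ijk}(L')\bigr)$ strictly larger than $\Phi\bigl(f^t(\text{old})\bigr)$ on some new face. This $L'$ is in general not a positive configuration, so passing to $-\val$ through the subtraction-free identity expressing old coordinates in terms of new ones yields only
\[
-\val\bigl(f_{\text{old}}(L')\bigr)\;\leq\;\Phi^{-1}\bigl(-\val(f_{\text{new}}(L'))\bigr),
\]
which points the wrong way to force $-\val\bigl(f_{\text{old}}(L')\bigr)>f^t(\text{old})$. Worse, $\Phi^{-1}$ contains tropical subtractions and is not monotone, so a single enlarged component of $-\val\bigl(f_{\text{new}}(L')\bigr)$ says nothing about $\Phi^{-1}$ of the full vector; and the lifts realizing the maximum for distinct new-face functions $f^t_{ijk}$ are generally different, so there is no single ``resulting tropical data'' to feed into $\Phi^{-1}$ in the first place.

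The paper replaces this step with a genericity argument. For each fixed $(i,j,k,p,q,r)$, the set of real flags $y_a$ lying in the $\R$-span of $v_{a1},\ldots,v_{am}$ that realize $f^t_{ijk}(x_p,x_q,x_r)$ is open; hence one can choose a single tuple $y_1,\ldots,y_4$ achieving \emph{all} the $f^t$-values simultaneously, for both the old triples $(1,2,3),(1,3,4)$ and the new triples $(1,2,4),(2,3,4)$. For this common maximizer $y$ the subtraction-free expression of new coordinates in old gives
\[
f^t(\text{new})\;=\;-\val\bigl(f'(\text{new})\bigr)\;\leq\;\Phi\bigl(-\val(f'(\text{old}))\bigr)\;=\;\Phi\bigl(f^t(\text{old})\bigr),
\]
exactly the missing inequality. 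The existence of a simultaneous maximizer over all triples is the idea your proposal lacks.
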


\begin{proof} The general case is equivalent to the case where $\lambda_i=0$, so we assume this for simplicity. Every change of triangulation comes from a sequence of flips; thus is suffices to consider the case where we have just four flags $F_i$, $i=1, 2, 3, 4$. We will consider this simpler case.

Assume that $v_{i1}, \dots,v_{im}$ are a good lift of $F_i$ in the triangulation with triangles $123, 134$. We want to show that they remain good lifts for the triangles $124, 234$. Let the span of $v_{i1}, \dots, v_{im}$ be $x_i$. 

We already know that 

$$f_{ijk}^t (x_p,x_q,x_r)=-\val(\det(v_{p1}, \dots,v_{pi},v_{q1}, \dots,v_{qj}, v_{r1}, \dots, v_{rk}))$$
for $(p,q,r)=(1,2,3)$ and $(1,3,4)$, and want to conclude this for $(p,q,r)=(1,2,4)$ and $(2,3,4)$. Recall that $f_{ijk}^t (x_p,x_q,x_r)$ is defined by taking the maximum of minus the valuation of the determinant of some subset of $i$ vectors in $x_p$, some subset of $j$ vectors in $x_q$, and some subset of $k$ vectors in $x_r$. Equivalently, it comes from taking the maximum of minus the valuation of the determinant of some $i$-dimensional subspace of the $\R$ span of $v_{p1}, \dots, v_{pm}$, some $j$-dimensional subspace of  $\R$ span of $v_{q1}, \dots,v_{qm}$, and some $k$-dimensional subspace of the $\R$ span of $v_{r1}, \dots, v_{rm}$. Among all the possile choices for these subspaces (the space of choices is parameterized by a product of Grassmanians), the set of them that achieve the maximum of minus the valuation of the determinant is an open set.

Now consider the space of flags in the $\R$ span of $v_{i1}, \dots,v_{im}$. Let $y_i$ for $i=1, 2, 3, 4$ be any flags in the span of $v_{i1}, \dots,v_{im}$, and let them be represented by $w_{i1}, \dots, w_{im}$, where the $r$-dimensional subspace in $y_i$ is the span of $w_{i1}, w_{i2}, \dots, w_{ir}$. Then there is an open subset of choices for the $y_i$ such that 

$$f_{ijk}^t (x_p,x_q,x_r)=-\val(\det(w_{p1}, \dots, w_{pi}, w_{q1}, \dots. w_{qj}, w_{r1}, \dots, w_{rk}))$$
for each triples $(p,q,r)=(1,2,3), (1,3,4), (1,2,4)$ and $(2,3,4)$ simultaneously.

Thus there is some choice of $y_i$ such that the above equality holds for all $i, j, k$ as well as for all triples $(p,q,r)$. Now let $$f'_{ijk}(p,q,r)=\det(w_{p1}, \dots, w_{pi}, w_{q1}, \dots. w_{qj}, w_{r1}, \dots, w_{rk}).$$ Then we have $$f_{ijk}^t (x_p,x_q,x_r)=-\val(f'_{ijk}(p,q,r)).$$

We know that $$-\val(f_{ijk}(F_p,F_q,F_r))=-\val(f'_{ijk}(p,q,r))=f_{ijk}^t (x_p,x_q,x_r)$$ for $(p,q,r)=(1,2,3)$ and $(1,3,4).$ We would like to conclude this also for $(p,q,r)=(1,2,4)$ and $(2,3,4)$.

By \cite{FG1}, we know that all the functions $$f'_{ijk}(1,2,4), \textrm{ }  f'_{ijk}(2,3,4)$$ can be expressed in terms of the functions $$f'_{ijk}(1,2,3), \textrm{ } f'_{ijk}(1,3,4)$$ using only addition, multiplication and division. This gives us that $$-\val(f'_{ijk}(1,2,4)) \textrm{, } -\val(f'_{ijk}(2,3,4))$$ are less than or equal to some tropical expression in $$-\val(f'_{ijk}(1,2,3)) \textrm{, } -\val(f'_{ijk}(1,3,4)).$$

However, by positivity, we know that $$-\val(f_{ijk}(F_1,F_2,F_4)) \textrm{, } -\val(f_{ijk}(F_2,F_3,F_4))$$ are equal to (and are not just less than or equal to) the same tropical expressions in $$-\val(f_{ijk}(F_1,F_2,F_3)) \textrm{, } -\val(f_{ijk}(F_1,F_3,F_4)).$$

Therefore we have that $$-\val(f'_{ijk}(1,2,4)) \leq -\val(f_{ijk}(F_1,F_2,F_4))$$ and $$-\val(f'_{ijk}(2,3,4)) \leq -\val(f_{ijk}(F_2,F_3,F_4)).$$ Thus by the maximality of $$-\val(f'_{ijk}(1,2,4)) \textrm{, } -\val(f'_{ijk}(2,3,4)),$$ we must have $$-\val(f'_{ijk}(1,2,4))=-\val(f_{ijk}(F_1,F_2,F_4))$$ and $$-\val(f'_{ijk}(2,3,4))=-\val(f_{ijk}(F_2,F_3,F_4)).$$ Therefore 

$$f_{ijk}^t (x_p,x_q,x_r)=-\val(f'_{ijk}(p,q,r))=-\val(f_{ijk}(F_p,F_q,F_r))$$ $$=-\val(\det(v_{p1}, \dots,v_{pi},v_{q1}, \dots,v_{qj},v_{r1}, \dots, v_{rk}))$$
for $(p,q,r)=(1,2,4)$ and $(2,3,4)$.

\end{proof}

Thus the map from configurations in $G/U(\cK)$ to configurations in $G(\cK)/G(\cO)$ does not depend on the choice of triangulation. Of course, the map does depend on choosing the $\lambda_i$: we get a different map for each choice of $\lambda_i$, or a map

$$\A_{G,S}(\cK_{>0}) \times \Lambda^n = \Conf_n^+ G/U((t)) \times \Lambda^n \cdots \rightarrow \Conf_n^+ \Gr$$
Thus it is better to think that for every point in $\A_{G,S}(\cK_{>0})$, we have a cone of positive configurations in $G(\cK)/G(\cO)$, parameterized by all choices of $\lambda_i$ that are ``large enough'' for some lifts of the flags. These families of positive configurations in $\Gr$ give a configuration of cones inside $\Gr$.

It turns out that there exist lifts $g_i$ of $F_i$ such that we may choose the $\lambda_i$ minimally. In other words, a set of coweights $\lambda_i$ is large enough for some choice lifts only if they are large enough for the particular lifts $g_i$. A statement equivalent to this, though in somewhat different language, appears in \cite{GS}. This is discussed further in section 5.7.

\subsection{Virtual Positive Configurations}

To give a better framework for thinking about these cones of configurations of points, we will define virtual positive configurations. A virtual positive configuration of $n$ points in the affine Grassmanian consists of $n$ pairs $(x_1,\lambda_1), \dots, (x_n,\lambda_n)$ where $x_1, \dots, x_n$ is a positive configuration of points in the affine Grassmanian, and the $\lambda_i$ are coweights.

Starting with a positive configuration of $n$ flags $F_1, \dots, F_n$, let  $v_{i1}, \dots, v_{im}$ be some lifts of these flags. Then suppose that for a choice of $\lambda_i$, $t^{-\lambda_{i1}}v_{i1}, \dots, t^{-\lambda_{im}}v_{im}$ is a good lift of $F_i \cdot \lambda_i$. Let $x_i$ be the $\cO$-module spanned by $t^{-\lambda_{i1}}v_{i1}, \dots, t^{-\lambda_{im}}v_{im}$. Then we can associate to $F_1, \dots, F_n$ the virtual positive configuration $$(x_1,-\lambda_1), \dots, (x_n,-\lambda_n)$$

Observe that by Lemma~\ref{independence of lift} the $x_i$ only depend on the $\lambda_i$ and not the lifts $v_{i1}, \dots v_{im}$. If instead we had chosen a different set of coweights $\lambda'_i$ instead of $\lambda_i$ (for example, it is clear that we could have taken any set of $\lambda'_i$ such that each was larger than the corresponding $\lambda_i$), we would have ended up with a different virtual positive configuration $(x'_1,-\lambda'_1), \dots, (x'_n,-\lambda'_n)$. Thus we get many different virtual positive configurations $(x_1,-\lambda_1), \dots, (x_n,-\lambda_n)$ as the set of $\lambda_i$ varies over some subset of $\Lambda^n$. All we know about this subset is that it is closed under addition by any element of the monoid $\Lambda_+^n$. (It is shown in \cite{GS} that there is a smallest possible value for the $\lambda_i$ such that all other values can be obtained by adding some element of the moniod $\Lambda_+^n$--see the last paragraph of the previous section.)

We would like to consider these different virtual configurations that arise for different choices of $\lambda_i$ to be equivalent. 

\begin{definition} A {\em family of virtual positive configurations of points in $\Gr(G)$} is a set of virtual positive configurations $(x_1,-\lambda_1)$, $\dots, (x_n,-\lambda_n)$ (the $x_i$ vary with choices of $\lambda_i$), which comes from a single positive configuration of flags $F_i$ in $G/U^-(\cK)$.
\end{definition}

Thus we have a well-defined map from a positive configuration in $G/U(\cK)$ to a family of virtual configurations in $G(\cK)/G(\cO)$. One can check as an exercise (and though we do not logically need this it is useful psychologically), that knowing only such a family of virtual configurations in $G(\cK)/G(\cO)$, we can reconstruct the original positive configuration in $G/U(\cK)$ up to the action of $T(\R)$ on each flag. This is in contrast with the fact that any particular virtual configuration can come from many different positive configurations in $G/U(\cK)$.

We can now define an action of of $\Lambda^n$ on virtual positive configurations of points in $\Gr(G)$. (This is different than how $\Lambda^n$ parameterizes different virtual positive configurations in a family of virtual positive configurations).

Note that if $(x_1,-\lambda_1)$, $\dots, (x_n,-\lambda_n)$ is a virtual positive configuration, then so is $(x_1,-\lambda_1 + \mu_1)$, $\dots$, $(x_n,-\lambda_n + \mu_n)$ for any coweights $\mu_1, \dots, \mu_n$. Whereas $(x_1,-\lambda_1), \dots, (x_n,-\lambda_n)$ came from a family of virtual positive configurations associated to $F_1, \dots, F_n$, the new configuration comes from a family of virtual positive configuration of flags $F_1 \cdot \mu_1, \dots, F_n \cdot \mu_n$: 
$$F_1, \dots, F_n \rightarrow (x_1,-\lambda_1), \dots, (x_n,-\lambda_n)$$
$$F_1 \cdot \mu_1, \dots, F_n \cdot \mu_n \rightarrow (x_1,-\lambda_1 + \mu_1), \dots, (x_n,-\lambda_n + \mu_n).$$

We define the action of $\Lambda^n$ on virtual positive configurations in this way: it takes the virtual configuration of points $(x_1,-\lambda_1)$, $\dots, (x_n,-\lambda_n)$ in $\Gr(G)$ to the virtual configuration $(x_1,-\lambda_1 + \mu_1)$, $\dots, (x_n,-\lambda_n + \mu_n)$. Note that this action takes virtual configurations in one family to virtual configurations in another; in other words, it respects families of virtual configurations of points.

We will say that a virtual positive configuration $(x_1,-\lambda_1), \dots, (x_n,-\lambda_n)$ is realized by an actual configuration $x'_1, \dots, x'_n$ if $(x_1,-\lambda_1), \dots, (x_n,-\lambda_n)$ and $(x'_1,0), \dots, (x'_n,0)$ lie in some family of virtual positive configurations. Similarly, we make the analogous definition for virtual positive configurations on points in the affine building: a virtual positive configuration $(p_1,-\lambda_1), \dots, (p_n,-\lambda_n)$ is realized by an actual configuration $p'_1, \dots, p'_n$ if $(p_1,-\lambda_1), \dots, (p_n,-\lambda_n)$ and $(p'_1,0), \dots, (p'_n,0)$ lie in some family of virtual positive configurations

Let us now make some observations. Suppose we have a positive configuration of $n$ flags $F_1, \dots, F_n$, and let $(x_1,-\lambda_1)$, $\dots,$ $(x_n,-\lambda_n)$ be any of the associated virtual configurations. By definition, the positive configuration $x_1, \dots, x_n$ is defined by good lifts, so that we know that 

$$-\val(f_{ijk}(F_p \cdot \lambda_p,F_q \cdot \lambda_q,F_r \cdot \lambda_r))=f_{ijk}^t (x_p,x_q,x_r)$$
for all $1 \leq p, q, r \leq n$ and $0 \leq i, j, k\leq m-1$ with $i+j+k=m$. It is also clear from direct calculation that $$f_{ijk}(F_p \cdot \lambda_p,F_q \cdot \lambda_q,F_r \cdot \lambda_r) = f_{ijk}(F_p, F_q, F_r) \cdot t^{-\lambda_p \cdot \omega_i -\lambda_q \cdot \omega_j -\lambda_r \cdot \omega_k}$$ where $\omega_i, \omega_j, \omega_k$ are fundamental weights, so that we can conclude that 

$$f_{ijk}^t (x_p,x_q,x_r)=-\val(f_{ijk}(F_p, F_q, F_r)) + \lambda_p \cdot \omega_i + \lambda_q \cdot \omega_j + \lambda_r \cdot \omega_k.$$

We extend the definition of $f_{ijk}^t$ to virtual positive configurations of points in the affine Grassmanian as follows. We define $$f_{ijk}^t ((x_p, \lambda_p),(x_q, \lambda_q),(x_r, \lambda_r)) = f_{ijk}^t (x_p,x_q,x_r) + \lambda_p \cdot \omega_i + \lambda_q \cdot \omega_j +\lambda_r \cdot \omega_k.$$

Then if the positive configuration of flags $F_i$ is associated with the virtual positive configuration $(x_i,\lambda_i)$, then we have $$f_{ijk}^t ((x_p,\lambda_p),(x_q,\lambda_q),(x_r,\lambda_r)) = -\val(f_{ijk} (F_p,F_q,F_r)),$$
and this is true for any virtual positive configuration $(x_i,\lambda_i)$ in the family of virtual positive configurations associated to the positive configuration of flags $F_i$.

\subsection{Definition of laminations for the $\A$ space of a disc with marked points}

We are now very close to getting a complete definition of higher laminations. We have defined the functions $f_{ijk}^t$ on virtual positive configurations in the affine Grassmanian. Moreover, because these virtual positive configurations come from positive configurations of flags we have the following:

\begin{prop}\label{tropical relations} Suppose we have a virtual positive configuration of points in the affine Grassmanian $(x_i,\lambda_i)$. Then all the functions $f_{ijk}^t ((x_p, \lambda_p),(x_q, \lambda_q),(x_r, \lambda_r))$ for different $i$, $j$, $k$, $p$, $q$, $r$ satisfy the tropical relations satisfied by tropical points of $\A_{G,S}$.

\end{prop}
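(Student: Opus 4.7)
The plan is to reduce the proposition to the general tropicalization principle of Section~3 by exploiting the identity
$$f_{ijk}^t\bigl((x_p,\lambda_p),(x_q,\lambda_q),(x_r,\lambda_r)\bigr) \;=\; -\val\bigl(f_{ijk}(F_p,F_q,F_r)\bigr)$$
established in the paragraph immediately preceding the statement, where $F_1,\dots,F_n$ is any positive configuration of flags in $G/U(\cK)$ to whose associated family of virtual positive configurations $(x_i,\lambda_i)$ belongs.

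First, I would recall that $(F_1,\dots,F_n)$ determines a $\cK_{>0}$-point of $\A_{G,S}$ (with $S$ the disc with $n$ marked points), and that the functions $f_{ijk}$ attached to the triangles and edges of any triangulation $T$ of the $n$-gon form a positive coordinate chart on $\A_{G,S}$, with transition maps between different triangulations given by subtraction-free rational expressions (the positive atlas structure of Fock--Goncharov recalled in Section~2). By the argument of Section~3, the tuple of negative valuations $-\val(f_{ijk}(F_p,F_q,F_r))$ in any fixed chart $T$ constitutes the coordinate tuple of a well-defined tropical point of $\A_{G,S}(\Zt)$; equivalently, these numbers transform tropically across charts, so they satisfy precisely the tropical relations defining $\A_{G,S}(\Zt)$.

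Second, the identity above identifies $f_{ijk}^t((x_p,\lambda_p),(x_q,\lambda_q),(x_r,\lambda_r))$ with the tropical coordinate $-\val(f_{ijk}(F_p,F_q,F_r))$, so the $f_{ijk}^t$-values inherit the same tropical relations. This is essentially the whole content of the proof.

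The one subtlety to address, and the only real obstacle, is ensuring the identity holds uniformly across triangulations rather than just for the single triangulation used to select good lifts when constructing the $x_i$. This is exactly what Lemma~\ref{independent of triangulation} provides: a good lift of $F_i\cdot\lambda_i$ for one triangulation remains good under a flip, and hence under any sequence of flips, so the identity $f_{ijk}^t = -\val(f_{ijk})$ is valid for every face in every triangulation (the analogous statement for edge functions is the case where one of $i,j,k$ is zero, and follows from the same lemma or directly from the definition of the distance function). With this uniformity in hand, the tropical relations among coordinates across different charts on $\A_{G,S}$ translate directly into the asserted tropical relations among the $f_{ijk}^t$-values on the virtual positive configuration, completing the proof.
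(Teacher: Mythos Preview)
Your proof is correct and follows essentially the same argument as the paper: use the identity $f_{ijk}^t = -\val(f_{ijk})$ established just before the proposition, invoke positivity so that all $f_{ijk}(F_p,F_q,F_r)\in\cK_{>0}$, and then apply the tropicalization principle of Section~3. Your explicit remark that Lemma~\ref{independent of triangulation} is what guarantees the identity holds for \emph{all} triples $(p,q,r)$ (not just those in one triangulation) is a point the paper leaves implicit in the sentence ``for all $1\le p,q,r\le n$'' preceding the proposition, but it is the same ingredient.
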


\begin{proof} The functions $f_{ijk} (F_p,F_q,F_r)$ satisfy some relations defining $\A_{G,S}$. These relations involve only addition, multiplication, and division. Moreover, because we have a positive configuration, $$f_{ijk} (F_p,F_q,F_r) \in \cK_{>0}.$$ Therefore the negative valuations of these functions
$$-\val(f_{ijk} (F_p,F_q,F_r))$$
must satisfy the tropicalizations of the corresponding relations. Because $$f_{ijk}^t ((x_p,\lambda_p),(x_q\lambda_q),(x_r\lambda_r)) = -\val(f_{ijk} (F_p,F_q,F_r)),$$ the functions $f_{ijk}^t ((x_p,\lambda_p),(x_q,\lambda_q),(x_r,\lambda_r))$ satisfy the tropical relations defining $\A_{G,S}(\Z^t)$ as well.

\end{proof}

We can thus associate to any tropical point of $\A_{G,S}$ some virtual positive configuration of points in the affine Grassmanian. The problem is that there are many associated virtual positive configurations: not only do they come in families, but there are many points in $\A_{G,S}(\cK)$ which have identical valuations.

This is where it becomes useful to look at virtual positive configurations of points in the affine building. Let $(x_1,-\lambda_1)$, $\dots,$ $(x_n,-\lambda_n)$ be a virtual positive configurations in the affine Grassmanian, and suppose the points $x_1, \dots, x_n$ map to points $p_1, \dots, p_n$ in the affine building. In this situation, we will say that $$(p_1,-\lambda_1), \dots, (p_n,-\lambda_n)$$ is the corresponding virtual positive configuration of points in the affine building. We can similarly define families of virtual positive configurations of points in $\Delta(G)$ coming from a positive configuration of flags $F_i \in G/U((t))$. We extend the tropical functions to the virtual positive configurations of points in the affine building as follows:

$$f_{ijk}^t ((p_p,\lambda_p),(p_q,\lambda_q),(p_r,\lambda_r))=f_{ijk}^t ((x_p,\lambda_p),(x_q,\lambda_q),(x_r,\lambda_r)).$$

The reason we want to view these configurations inside the affine building rather than the affine Grassmanian is that the affine building is where it is most natural to define equivalence between configurations of points. The finer algebro-geometric properties of the affine Grassmanian are not reflected in the tropical functions; it is precisely the metric structure of the building which is captured in the tropical functions.

Our goal will be to show that tropical points $\A_{G,S}(\Zt)$ correspond precisely to virtual positive configurations of points in the affine building up to equivalence. Thus we seek some notion of equivalence such that all virtual positive configurations of points in a given family will be equivalent. We now give such a definition.

We will repeat here some of the treatment from section 5.1 defining virtual positive configurations of points in the affine building. Recall that equivalence of two positive configurations of points in the affine building is the minimal equivalence relation generated by isometry and cutting and gluing.

Let $p_1, \dots, p_n$ and $p'_1, \dots, p'_n$ be two positive configurations of points of the affine building. The first requirement for equivalence of these positive configurations is that the pairwise distances between corresponding points are equal:

$$d(p_i,p_j)=d(p'_i,p'_j)$$

The {\em perimeter} of a configuration $p_1, \dots, p_n$ is a union of some choice of geodesics between each $p_i$ and $p_{i+1}$, where indices are taken cyclically. Then, because
$$d(p_i,p_{i+1})=d(p'_i,p'_{i+1}),$$
the set of geodesics between $p_i$ and $p_{i+1}$ and the set of geodesics between $p_i$ and $p_{i+1}$ can be canonically identified. Thus we may choose a perimeter for $p_1, \dots, p_n$ and the corresponding perimeter for $p'_1, \dots, p'_n$.

Now let $a$ and $b$ be two points in the perimeter of the first configuration. Suppose that $a$ is on the between $p_i$ and $p_{i+1}$ and $b$ is on the geodesic between $p_j$ and $p_{j+1}$. We may choose some geodesic between $a$ and $b$. Take the corresponding points $a'$ and $b'$ on the perimeter of $p'_1, \dots, p'_n$. Then we make a ``cut'' to form the configurations $$a, p_{i+1}, \dots, p_j, b$$ and $$b, p_{j+1}, \dots, p_i, a$$ and the corresponding configurations $$a', p'_{i+1}, \dots, p'_j, b'$$ and $$b', p'_{j+1}, \dots, p'_i, a'.$$ By Lemma~\ref{adding perimeter points} below, we will see that all these resulting configurations are still positive configurations of points in the affine building.

Then we will say that $p_1, \dots, p_n$ and $p'_1, \dots, p'_n$ are equivalent if and only if $a, p_{i+1}, \dots, p_j, b$ is equivalent to $a', p'_{i+1}, \dots, p'_j, b'$ and $b, p_{j+1}, \dots, p_i, a$ is equivalent to $b', p'_{j+1}, \dots, p'_i, a'$. Using cuts, we can reduce to the case of triangles with miniscule side lengths. Finally, we say that two such triangles are equivalent if their side lengths coincide, which in turn implies that they are isometric. Note that for larger triangles, with not necessarily miniscule side lengths, it is not generally the case the the side lengths determine the triangle up to isometry. This is an unusual property of distances in affine buildings. Our definition of equivalence of equivalence does not depend on the sequence of cuts, as we will see in the main theorem below Theorem~\ref{tropical points}.

Let us now extend this equivalence relation to {\em virtual} positive configurations of points in the affine building.

Let $(p_i, \lambda_i)$ and $(q_i, \mu_i)$ be two virtual configurations. Suppose that all the $\lambda_i$ and $\mu_i$ are positive coweights. Then both $(p_i, \lambda_i)$ and $(q_i, \mu_i)$ come in a family containing the actual configurations $(p'_i, 0)$ and $(q'_i, 0)$. We will say that the positive configuration $p_1', \dots , p_n'$ realizes the virtual configuration $(p_i, \lambda_i)$. Then we will say that $(p_i, \lambda_i)$ and $(q_i, \mu_i)$ are equivalent virtual configurations if and only if $p'_1, \dots, p'_n$ and $q'_1, \dots, q'_n$ are equivalent as configurations. 

More generally, two configurations $(p_i, \lambda_i)$ and $(q_i, \mu_i)$ are equivalent if there exists $\nu_i$ such that $(p_i, \lambda_i+\nu_i)$ and $(q_i, \mu_i+\nu_i)$ are equivalent. For large enough $\nu_i$, $\lambda_i+\nu_i$ and $\mu_i+\nu_i$ will be dominant, so that $(p_i, \lambda_i+\nu_i)$ and $(q_i, \mu_i+\nu_i)$ can be realized by some configurations $p'_1, \dots, p'_n$ and $q'_1, \dots, q'_n$. In other words, if $(p_i, \lambda_i)$ and $(q_i, \mu_i)$ are equivalent, they are in the same families as $(p'_i, -\nu_i)$ and $(q'_i, -\nu_i)$ for some $\nu_i$, respectively, and where $p'_1, \dots, p'_n$ and $q'_1, \dots, q'_n$ are equivalent positive configurations.

From this definition it is clear that virtual positive configurations coming from the same family are equivalent. Thus every family of virtual positive configurations of points in the affine Grassmanian
$$\Conf_n^{+,\textrm{vir}} \Gr$$
gives, up to equivalence, a single virtual configuration of points in the affine building, which we will denote
$$\Conf_n^{+,\textrm{vir}} \Delta(G).$$

Given a virtual positive configuration $(p_i, \lambda_i)$ of points in the affine building and a trianglulation of the $n$-gon, we associate to each triangle $(p_a,\lambda_a), (p_b\lambda_b), (p_c\lambda_c)$ the tropical functions

$$f_{ijk}^t ((p_a,\lambda_a),(p_b\lambda_b),(p_c\lambda_c)) :=f_{ijk}^t ((x_a,\lambda_a),(x_b\lambda_b),(x_c\lambda_c)) = \val(f_{ijk} (F_a,F_b,F_c)).$$

Here is the main theorem of this paper

\begin{theorem}\label{tropical points} There is a bijection between tropical points of $\A_{G,S}(\Zt)$ and virtual positive configurations of points in the affine building up to equivalence. Given a virtual positive configuration $(p_i, \lambda_i)$ of points in the affine building, it comes from a virtual positive configuration of points $(x_i, \lambda_i)$ in the affine Grassmanian, which in turn comes from a positive configuration of flags $F_i \in G/U^-(\cK)$. Given a trianglulation of the $n$-gon, we associate to each triangle $(p_a,\lambda_a), (p_b\lambda_b), (p_c\lambda_c)$ the functions

$$f_{ijk}^t ((p_a,\lambda_a),(p_b\lambda_b),(p_c\lambda_c)) :=f_{ijk}^t ((x_a,\lambda_a),(x_b\lambda_b),(x_c\lambda_c)) = -\val(f_{ijk} (F_a,F_b,F_c)).$$

These functions satisfy the tropical relations, and therefore give a well-defined point of 
$\A_{G,S}(\Zt)$. The values of these functions completely determine the virtual positive configurations of points in the affine building up to equivalence, and vice versa.
\end{theorem}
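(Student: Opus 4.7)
My plan is to assemble the bijection in both directions from the machinery developed in Sections 5.2--5.4 and then verify that each direction descends to equivalence classes.

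For the map from virtual positive configurations in the building to tropical points, I start with $(p_i,\lambda_i)$, lift to a virtual positive configuration $(x_i,\lambda_i)$ in the affine Grassmannian, and choose a representative positive configuration of flags $F_1,\dots,F_n$ in $G/U^-(\cK_{>0})$ as in Section 5.3. Given any triangulation of the $n$-gon, the functions
\[ f_{ijk}^t\bigl((x_a,\lambda_a),(x_b,\lambda_b),(x_c,\lambda_c)\bigr) = -\val f_{ijk}(F_a,F_b,F_c) + \lambda_a\!\cdot\!\omega_i + \lambda_b\!\cdot\!\omega_j + \lambda_c\!\cdot\!\omega_k \]
are the candidate tropical coordinates. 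Proposition~\ref{tropical relations} says they satisfy the defining tropical relations of $\A_{G,S}(\Zt)$, so they determine a genuine tropical point. I then need to check that this point is independent of the auxiliary choices: the choice of triangulation is handled by Lemma~\ref{independent of triangulation} (the same good lifts work for any triangulation, so the $f^t_{ijk}$ values on new triangles are expressed by the tropical flip formulas, which agree with mutations on the $\A_{G,S}(\Zt)$ side); the choice of lifts and of the coweights $\lambda_i$ within the family is handled by Lemmas~\ref{independent of lift} and~\ref{independence of lift}, combined with the explicit $\lambda\!\cdot\!\omega$ correction built into the extended definition of $f^t_{ijk}$; and the choice of $F_i$ within the $T(\cK_{>0})^n$-orbit produced by the same virtual configuration cancels by the same correction.

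For the reverse direction, I use the surjectivity of $-\val\colon \A_{G,S}(\cK_{>0})\to \A_{G,S}(\Zt)$ from Section~3 to lift any $\tau$ to a positive configuration $F_1,\dots,F_n$, then apply Lemma~\ref{large enough} to obtain good lifts and a family of virtual positive configurations $(x_i,-\lambda_i)$ in the Grassmannian; projecting to the building gives the desired $(p_i,-\lambda_i)$. Because $f^t_{ijk}$ of this configuration equals the tropical coordinate of $\tau$, applying the forward map recovers $\tau$.

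The main obstacle is showing that the $f^t_{ijk}$ values determine the virtual configuration up to the equivalence relation defined by cutting. Concretely, I must show that if two virtual positive configurations have identical $f^t_{ijk}$, one can run the recursive cutting procedure on both in a compatible way and arrive at the same minuscule triangles. The strategy is: (a) express the cut points (lying on geodesics between $p_i$ and $p_{i+1}$) and the invariants of the resulting sub-triangles as tropicalizations of positive rational expressions in the original $f_{ijk}(F_\bullet)$, so that equality of $f^t_{ijk}$ for the originals forces equality of $f^t_{ijk}$ for the pieces; (b) iterate until all pieces are minuscule triangles, whose side lengths are precisely the edge functions $f^t_{ij0}$, which by the base case of the equivalence relation determine them up to isometry; (c) prove independence of the sequence of cuts, which follows from the fact that on positive configurations the flip formulas are the same tropical cluster mutations that govern coordinate changes on $\A_{G,S}(\Zt)$, so any two cutting orders produce minuscule triangle data related by a bijective sequence of mutations and hence carry the same information. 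The subtlety in (c)---that positivity rules out the slack inequality in the tropical bound between different triangulations---is exactly what was leveraged in the proof of Lemma~\ref{independent of triangulation}, and the same positivity argument applied at the level of the building gives the required equality. Once these pieces are in place, well-definedness of both maps on equivalence classes and the inverseness of the correspondence follow immediately from the identity $f^t_{ijk}=-\val f_{ijk}$.
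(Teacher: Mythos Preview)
Your overall architecture matches the paper's: the maps in both directions and the tropical relations are already supplied by Sections~5.2--5.4, and the core of the argument is indeed that equal $f^t_{ijk}$ forces equivalence. But your step~(a) is where the real content lies, and you have not said how to carry it out. The paper makes this concrete with a lemma: given a positive configuration $p_1,\dots,p_n$ and a point $y$ on the geodesic from $p_a$ to $p_{a+1}$, one explicitly constructs a flag $F$ (by prescribing $f_{ijk}(F_a,F,F_{a+1})$ in terms of $d(p_a,y)$ and the edge functions $f_{i,m-i}(F_a,F_{a+1})$) and then checks that $F_1,\dots,F_a,F,F_{a+1},\dots,F_n$ is again positive and that it maps to an \emph{actual} (not merely virtual) configuration $p_1,\dots,p_a,y,p_{a+1},\dots,p_n$ in the building. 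This lemma is what justifies your claim that the invariants of sub-configurations after a cut are tropicalizations of positive rational expressions in the original $f_{ijk}(F_\bullet)$; without it, that claim is unsupported. The construction of $F$ and the verification that the enlarged configuration is positive and realized is the non-trivial technical step your outline skips.

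A second, smaller point: your step~(c) proposes to derive independence of the cutting sequence from flip relations, but cuts in the equivalence relation run between arbitrary perimeter points (including newly inserted geodesic points), not just between original vertices, so diagonal flips alone do not obviously suffice. The paper sidesteps this by proving both implications directly: for the converse (different $f^t_{ijk}$ $\Rightarrow$ not equivalent) it argues that if two sub-configurations each had the same coordinates then gluing would force the original coordinates to agree, so some cut must expose a discrepancy, and one iterates down to non-isometric minuscule triangles. Proving both directions of the iff is what simultaneously establishes the bijection and the cut-independence of the equivalence relation.
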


\begin{proof} Most of this theorem was already proved in Proposition~\ref{tropical relations}. We need only to show that two virtual positive configurations of points in the affine building have the same tropical coordinates $f_{ijk}^t$ if and only if they are equivalent.

Recall that any two virtual positive configurations that lie in the same family have the same tropical coordinates $f_{ijk}^t$. Moreover, it is clear from the definitions that virtual positive configurations in the same family are equivalent. Also note that the properties of equivalence and having the same coordinates are stable under the action of $\Lambda$: two virtual positive configurations of points in the affine building $(p_i, \lambda_i)$ and $(q_i, \mu_i)$ have the same coordinates if and only if for any choice of coweights $\lambda_i$, $(p_i, \lambda_i+\nu_i)$ and $(q_i, \mu_i+\nu_i)$ have the same coordinates. Thus it suffices to work with positive configurations $p_i$ and $q_i$ and show that they are equivalent if and only if they have the same (tropical) coordinates.

Let $p_i$ and $q_i$ be two positive configurations of points in the affine building. Let them come from positive configurations $x_i$ and $x'_i$ in the affine Grassmanian which in turn come from positive configurations $F_i$, $F'_i$ of flags in $G/U^-(\cK)$. The following technical lemma will be the crux of our proof, and we will make repeated use of it:

\begin{lemma}\label{adding perimeter points} Given any positive configuration $p_1, \dots, p_n$ in the affine building, let $y$ be a point on some geodesic between $p_a$ and $p_{a+1}$. Then $p_1, \dots, p_a, y, p_{a+1}, \dots, p_n$ is a positive configuration of $n+1$ points in the affine building.
\end{lemma}

\begin{proof} We can actually do this on the level of flags. The idea is to construct a flag $F$ such that $F_1$, $\dots,$ $F_a$, $F$, $F_{a+1}, \dots,$ $F_n$ is a positive configuration of flags that maps down to $p_1$, $\dots,$ $p_a$, $y$, $p_{a+1}, \dots,$ $p_n$ in the affine building.

Let us denote by $$f_{i,m-i}(F_a,F_{a+1})$$ the edge function corresponding to $$\det(v_1, \dots, v_i, w_1, \dots, w_{m-i})$$ where the sequence of vectors $v_1, \dots, v_m$ gives the flag $F_a$ and the sequence of vectors $w_1, \dots, w_m$ gives the flag $F_{a+1}$. Let $f_{i,m-i}^t$ be the corresponding tropical function given by the negative valuation of $f_{i,m-i}$. Recall that $$f_{i,m-i}^t (p_a,p_{a+1})=d(p_a,p_{a+1}) \cdot \omega_{m-i}.$$

Now we want to construct a flag $F$ that will map down to the point $y$ in the affine building. Note that $$d(p_a,y)+d(y,p_{a+1})=d(p_a,p_{a+1}).$$ Let $$d_i = d(p_a,y) \cdot \omega_{m-i}.$$ We will construct $F$ by stipulating that $$f_{ij0}(F_a, F, F_{a+1})=t^{d_i},$$ $$f_{0jk}(F_a, F, F_{a+1})=f_{j,k}(F_a,F_{a+1})t^{-d_j},$$ $$f_{ijk}(F_a, F, F_{a+1})=f_{i+j,k}(F_a,F_{a+1})t^{-d_{i+j}+d_i}.$$

One sees that by construction, $F_a,  F, F_{a+1}$ is a postive configuration in $G/U^-(\cK)$. Moreover, it maps to an actual configuration of points in the affine Grassmanian, not merely a virtual one. Consider the triple of flags 
$$F_a \cdot -d(y,p_a), F, F_{a+1} \cdot -d(y,p_{a+1}).$$ If we show that this gives an actual configuration in the building, then because $d(y,p_a)$ and $d(y,p_{a+1})$ are positive coweights, $p_a,  y, p_{a+1}$ will also be an actual configuration.

But one can calculate that the functions 
$$f_{ijk}(F_a \cdot -d(y,p_a), F, F_{a+1} \cdot -d(y,p_{a+1}))$$ all have valuation $0$; $F$ was essentially constructed so that this would be the case. This means that that we may choose the flags $F_a \cdot -d(y,p_a), F, F_{a+1} \cdot -d(y,p_{a+1})$ to be generated by vectors of valuation $0$: an explicit formula is given in section 9 of \cite{FG1} for the sets of vectors generating three flags in terms of the functions $f_{ijk}$. These three sets of vectors will span the standard $\cO$-module consisting of all vectors with valuation greater than or equal to $0$. Thus, the flags $F_a \cdot -d(y,p_a), F, F_{a+1} \cdot -d(y,p_{a+1})$ are associated to points $\tilde{x}_a, x, \tilde{x}_{a+1}$ in the affine Grassmanian which all coincide.

Then $f_{ijk}^t(x_a, x, x_{a+1})$ is exactly $0$, so 
$$-\val(f_{ijk}(F_a \cdot -d(y,p_a), F, F_{a+1} \cdot -d(y,p_{a+1})))=f_{ijk}^t(\tilde{x}_a, x, \tilde{x}_{a+1}),$$ and therefore the vectors that generated $\tilde{x}_a, x, \tilde{x}_{a+1}$ gave a good lift of
$$F_a \cdot -d(y,p_a), F, F_{a+1} \cdot -d(y,p_{a+1}).$$ Thus $\tilde{x}_a, x, \tilde{x}_{a+1}$ is a positive configuration of points in the affine Grassmanian. Therefore, because $d(y,p_a)$ and $d(y,p_{a+1})$ are positive coweights, $$F_a, F, F_{a+1}$$ will give a positive configuration of points $x_a, x, x_{a+1}$ in the affine Grassmanian, which maps down to a positive configuration of points $p_a, y, p_{a+1}$ in the affine building. By Lemma~\ref{independent of triangulation} we may glue to see that $$p_1, \dots, p_a, y, p_{a+1}, \dots, p_n$$ is a positive configuration of points in the building.

\end{proof}

We now return to a proof the theorem. Suppose we have two positive configurations of points $p_1, p_2, \dots, p_n$ and $q_1, q_2, \dots, q_n$ that under some (and hence any) triangulation of an $n$-gon have the same coordinates. We wish to show that they are equivalent.

First observe that the distance between any two points $p_i$ and $p_j$ is the same as the distance between the corresponding points $q_i$ and $q_j$ by virtue of the coordinates of the configurations being the same--recall that the edge functions give the distances between points in a configuration. In particular, the distance between $p_i$ and $p_{i+1}$ is the same as the distance between $q_i$ and $q_{i+1}$. Then choosing any geodesic between $p_i$ and $p_{i+1}$, we can choose the corresponding geodesic between $q_i$ and $q_{i+1}$, and if we like, we may extend the positive configurations $p_1, p_2, \dots, p_n$ and $q_1, q_2, \dots, q_n$ by adding points along the geodesics to form a perimeter. Assume we had done this to begin with, and that the result was that we had two configurations of points in the affine building $p_1, p_2, \dots, p_n$ and $q_1, q_2, \dots, q_n$ that were both perimeters.

Now take any two points $p_i$ and $p_j$. Then take any geodesic between them, and take the corresponding geodesic between $q_i$ and $q_j$. Then $p_i, p_{i+1}, \dots, p_j$ and $p_j, p_{j+1}, \dots, p_i$ are both positive configurations of points. Moreover, their coordinates are completely determined by the configuration $p_1, p_2, \dots, p_n$. Therefore, the corresponding configurations $q_i, q_{i+1}, \dots, q_j$ and $q_j, q_{j+1}, \dots, q_i$ are both positive and have the same respective coordinates as the configurations $p_i, p_{i+1}, \dots, p_j$ and $p_j, p_{j+1}, \dots, p_i$.

Thus the property of having the same coordinates is stable under the operation of cutting. Continuing this process, we may repeatedly cut the configurations $p_1, p_2, \dots, p_n$ and $q_1, q_2, \dots, q_n$ until we get miniscule triangles which are determined up to isometry by their side lengths, and therefore by the functions $f_{ijk}^t$.

Now let us show the inverse statement. Suppose that $p_1, p_2, \dots, p_n$ and $q_1, q_2, \dots, q_n$ don't have the same coordinates. If the pairwise distances between each $p_i$ and $p_j$ and the corresponding $q_i$ and $q_j$ aren't the same, then clearly they can't be equivalent. If they are the same, then let us make some ``cut''� of the configuration choosing any $p_i$ and $p_j$ and cutting to create two configurations $p_i, p_{i+1}, \dots, p_j$ and $p_j, p_{j+1}, \dots, p_i$. Then either $p_i, p_{i+1}, \dots, p_j$ and $q_i, q_{i+1}, \dots, q_j$ have different coordinates or $p_j, p_{j+1}, \dots, p_i$ and $q_j, q_{j+1}, \dots, q_i$ have different coordinates. This is because if both pairs had the same coordinates, then gluing together would give that the larger configurations had the same coordinates. Then with the smaller configurations, we may again add in points on the perimeter if necessary and make more cuts. Eventually, we will be left with non-equivalent miniscule triangles, so that we would find that the configurations were not equivalent.

\end{proof}

Finally, the following lemma shows that in our definition of equivalence of virtual configurations--two virtual configurations $(p_i, \lambda_i)$ and $(q_i, \mu_i)$ are equivalent if there exists $\nu_i$ such that the actual positive configurations realizing $(p_i, \lambda_i+\nu_i)$ and $(q_i, \mu_i+\nu_i)$ are equivalent--any choice of $\nu_i$ such that $\lambda_i+\nu_i > 0$ and $\mu_i+\nu_i > 0$ will suffice. Thus have the alternative definition:

\begin{definition}
Two virtual configurations $(p_i, \lambda_i)$ and $(q_i, \mu_i)$ are equivalent if and only if for all $\nu_i$ such that $\lambda_i+\nu_i > 0$ and $\mu_i+\nu_i > 0$, the actual positive configurations realizing $(p_i, \lambda_i+\nu_i)$ and $(q_i, \mu_i+\nu_i)$ are equivalent.
\end{definition}

The lemma is now quite simple:

\begin{lemma} If $p_i$ and $q_i$ are equivalent positive configurations of $n$ points in the affine building, then for any $n$ dominant coweights $\lambda_1, \lambda_2, \dots, \lambda_n$, if we allow the $\lambda_i$ to act on $p_i$ and $q_i$, the resulting positive configurations will still be equivalent.
\end{lemma}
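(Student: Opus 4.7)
The plan is to reduce the claim to Theorem~\ref{tropical points}, which characterizes equivalence of virtual positive configurations purely in terms of the tropical coordinates $f_{ijk}^t$. Since equivalence is detected by these coordinates, and since the $\Lambda^n$-action transforms coordinates by an additive shift that depends only on the weights $\lambda_i$ and on the indices $i,j,k$, equivalence is automatically preserved.

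More concretely, first I would fix a triangulation of the $n$-gon and, using Theorem~\ref{tropical points}, reinterpret the hypothesis ``$p_i$ and $q_i$ are equivalent'' as the equality
\[
f_{ijk}^t((p_a,0),(p_b,0),(p_c,0)) = f_{ijk}^t((q_a,0),(q_b,0),(q_c,0))
\]
for every triangle $(a,b,c)$ of the triangulation and every admissible triple $(i,j,k)$. Next I would invoke the extension formula
\[
f_{ijk}^t((x_a,\lambda_a),(x_b,\lambda_b),(x_c,\lambda_c)) = f_{ijk}^t(x_a,x_b,x_c) + \lambda_a\cdot\omega_i + \lambda_b\cdot\omega_j + \lambda_c\cdot\omega_k
\]
established earlier in the section. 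Applying this with the dominant coweights $\lambda_a,\lambda_b,\lambda_c$ to both configurations shows that the tropical coordinates of $(p_i,\lambda_i)$ and of $(q_i,\lambda_i)$ differ from the originals by the \emph{same} additive shift, so they remain equal to each other.

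Finally, since $\lambda_i$ are dominant, the virtual positive configurations $(p_i,\lambda_i)$ and $(q_i,\lambda_i)$ are realized by honest positive configurations $p_i'$ and $q_i'$ in the affine building (the configurations obtained by letting $\lambda_i$ act on $p_i$ and $q_i$ respectively). By Theorem~\ref{tropical points} again, equality of all the $f_{ijk}^t$ on these realizations is equivalent to equivalence of the resulting positive configurations $p_i'$ and $q_i'$, which is precisely the conclusion of the lemma.

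The only non-routine point is verifying that the realization of the virtual configuration $(p_i,\lambda_i)$ by acting by $\lambda_i$ on $p_i$ agrees, up to equivalence, with the construction given via good lifts and families of virtual configurations; but this follows immediately from the definition of the $\Lambda^n$-action on families and the observation that acting by a dominant $\lambda_i$ can only increase the ``large enough'' parameters, hence stays inside the same family. There is no genuine obstacle here: the lemma is essentially a bookkeeping consequence of Theorem~\ref{tropical points} together with the transformation law for $f_{ijk}^t$ under the $\Lambda^n$-action.
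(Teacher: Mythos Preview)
Your proposal is correct and follows essentially the same approach as the paper: the paper's proof is a two-sentence version of yours, observing that the $\Lambda^n$-action shifts the coordinates $f_{ijk}^t$ in an explicit way (the additive formula from the end of section~5.4) and then invoking Theorem~\ref{tropical points} to conclude that equal coordinates means equivalent configurations. Your added remarks about realizing the virtual configurations by acting with dominant $\lambda_i$ are more careful than the paper but not logically necessary, since Theorem~\ref{tropical points} is already stated for virtual positive configurations.
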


\begin{proof}
The action of the $\lambda_i$ on the configurations $p_i$ and $q_i$ changes the co-ordinates in an explicit way (see the end of section 5.4). We have just shown that configurations are equivalent if and only if they have the same coordinates. The configurations $p_i$ and $q_i$ have the same coordinates, so acting on them by the coweights $\lambda_i$ will give us configurations that still have the same coordinates. The resulting positive configurations will then be equivalent.

\end{proof}

Let us point out an interesting byproduct of Theorem~\ref{tropical points}. Note that the notion of equivalence between configurations of points in the affine building is weaker than (though possibly equivalent to) the notion of isometry. Thus different laminations can be distinguished using purely metric properties. Thus,

\begin{cor}\label{tropical distance functions} For a positive configuration of points $p_1, \dots p_n$ in the affine building, the tropical functions $f_{ijk}^t (p_a,p_b,p_c)$ depend only on the metric properties of the affine building. 
\end{cor}

We will investigate in a future paper how to construct the functions $f_{ijk}^t (p_a,p_b,p_c)$ explicitly in terms of the geometry of the affine building. This is the beginnings of the study of intersection pairings between laminations. One can further conjecture:

\begin{conj} For any configuration of points $p_1, \dots p_n$ in the affine building, the tropical functions $f_{ijk}^t (p_a,p_b,p_c)$ depend only on the metric properties of the configuration in the affine building. 
\end{conj}

\subsection{Relationship with the work of Goncharov-Shen}

After writing this paper, we learned of the very interesting and beautiful related work of Goncharov and Shen \cite{GS}. In this section, we would like to explain the relationship between our work and theirs.

In \cite{GS}, the authors define a distinguished cone of tropical points $\A^+_{G,S}(\Zt) \subset \A_{G,S}(\Zt)$ inside the set of all tropical points. This cone is cut out by a \emph{potential} $\mathcal{W}$ on the space $\A_{G,S}(\Zt)$. They give a bijection between the points in the cone and components of an object that they call the \emph{surface affine Grassmanian}. In particular, in the case where $S$ is a disc with marked points, they associate to tropical points of $\A^+_{G,S}(\Zt)$ top components of fibres of the convolution morphism, an object that arises naturally from Geometric Satake.

One of our motivations in our work was to attach an object to \emph{all} tropical points in $\A_{G,S}(\Zt)$. This was our reason for introducing the notion of ``virtual'' configurations. Another reason to work with ``virtual'' configurations is that it allows us to also tropicalize the space $\X_{G,S}$. To clarify the situation, we will show below that while $\A_{G,S}(\Zt)$ parameterizes virtual positive configurations of points in the affine building, $\A^+_{G,S}(\Zt)$ parameterizes positive configurations of points in the affine building. Thus the cone defined in \cite{GS} picks out the virtual configurations that are actual configurations. The duality conjectures of Fock and Goncharov imply that actual configurations on a disc with $n$ points parameterize invariants of $n$-fold tensor products of representations of $G$. This result follows from \cite{GS} (building on the work of Kamnitzer \cite{K}). We will give an alternative proof of the theorem below.

On the one hand, \cite{GS} relates tropical points to the geometry of the affine Grassmanian. Their approach has the advantage that it works for all semi-simple groups $G$. Their constructions are elegant, and their method is robust enough to cover many variations on the theme. Their work is powerful, but it depends crucially on the work of Kamnitzer \cite{K}.

On the other hand, our work focuses on the affine building rather than the affine Grassmanian. This has the advantage of clarifying the source of the piecewise-linear nature of the tropical functions: this tropical geometry comes from the geometry of the affine building (compare with \cite{JSY}). Our definitions are somewhat less abstract, while also having the flavor of geometric topology, and are more closely related to the classical theory of laminations. They also are related to spectral networks \cite{KNPS}. Finally, our notion of laminations is also concrete and simple: see the definitions in section 5.3. Our proofs, though the arguments are complicated in places, are in the end elementary, hands-on and self-contained.

Positive configurations of points in the affine building up to equivalence come from positive configurations in the affine Grassmanian. These positive configurations in the affine Grassmanian lie inside the top components of the convolution variety considered in \cite{GS}. One can think of our work as giving a positive structure on these convolution varieties (Note~\ref{positive structure}). In \cite{GS}, instead of using positive points, they use transcendental points, which have many formal similarities. One reason using positive points is useful is that we can then view laminations as coming from formal paths in the original space that limit to the tropical boundary. A more important reason is that the positive structure reflects some kind of planar structure on higher laminations, for example see Lemma~\ref{adding perimeter points}. In particular, for $G=SL_2$, we see that positivity is reflected in the fact that the trees we obtain are \emph{planar} trees.

Let us now relate the two approaches. In the course of the proof, we will give an alternative derivation of the hive inequalities independent of the approaches in \cite{K} and \cite{GS}.

\begin{theorem} Let $G=SL_m$. Then the points of $\A^+_{G,S}(\Zt)$ correspond to positive configurations of points in the affine building.
\end{theorem}

For $G=SL_m$, the points of $\A^+_{G,S}(\Zt)$ are precisely those (\cite{GS}) such that on each triangle we have that 
$$f_{i+1,j,k}^t+f_{i-1,j+1,k+1} \leq f_{i,j+1,k}^t+f_{i,j,k+1},$$
and also the similar the inequalities obtained by permutation of indices,
$$f_{i,j+1,k}^t+f_{i,j-1,k} \leq f_{i+1,j,k}^t+f_{i,j,k+1},$$
$$f_{i,j,k+1}^t+f_{i,j,k-1} \leq f_{i,j+1,k}^t+f_{i+1,j,k}.$$
These are the famous \emph{hive} or \emph{rhombus} inequalities. Here we use the convention that 
$$f_{m00}^t=f_{0m0}^t=f_{00m}^t=0.$$

We will treat the case of a triangle. Because of Lemma~\ref{independent of triangulation}, we see that if on each triangle we have a actual positive configuration of points in the affine building, then the entire configuration is an actual positive configuration of points in the affine building. Thus the case of the triangle implies the general case.

Suppose we have some virtual configuration $(x_1,\lambda_1), (x_2,\lambda_2), (x_3,\lambda_3)$ with coordinates $f_{ijk}^t$. We would like to see under what circumstances $(x_1,\lambda_1), (x_2,\lambda_2), (x_3,\lambda_3)$ is equivalent to an actual configuration $x'_1, x'_2, x'_3$. We can in fact analyze each of $(x_1,\lambda_1), (x_2,\lambda_2), (x_3,\lambda_3)$ individually to see whether each is a virtual point or an actual point. In other words, we can say something more refined:

\begin{prop} Let $(x_1,\lambda_1), (x_2,\lambda_2), (x_3,\lambda_3)$ be a virtual positive configuration of points in the affine building. Then it is equivalent to the configuration $(x_1,\lambda_1), (x_2,\lambda_2), (x_3',0)$ if and only if
\begin{multline}
f_{i,j,k+1}^t((x_1,\lambda_1), (x_2,\lambda_2), (x_3,\lambda_3))+f_{i,j,k-1}((x_1,\lambda_1), (x_2,\lambda_2), (x_3,\lambda_3)) \leq \\ f_{i,j+1,k}^t((x_1,\lambda_1), (x_2,\lambda_2), (x_3,\lambda_3))+f_{i+1,j,k}((x_1,\lambda_1), (x_2,\lambda_2), (x_3,\lambda_3)).
\end{multline}
In other words, we may take $x_3$ to be an actual point if and only if the above inequalities hold. Similar statements hold for $(x_1,\lambda_1), (x_2,\lambda_2)$ and the other sets of inequalities. 
\end{prop}

\begin{proof}

It is easy to see that replacing $(x_1,\lambda_1), (x_2,\lambda_2)$ with $x_1, x_2$ does not affect the inequalities in the proposition (in fact, it changes both sides of each inequality by the same amount, as can be seen by direct calculation). Thus it is sufficient to prove this in the case that we have a virtual configuration $x_1, x_2, (x_3,\lambda_3)$.

Suppose that $$f^t_{ijk}(x_1,, x_2, (x_3,\lambda_3))=a_{ijk}.$$
Now let $\lambda$ be any dominant coweight, and let $$b_i=\omega_i \cdot \lambda.$$

Let us construct an auxiliary point $x_4$ in the building with the property that 
$$f^t_{ijk}(x_1, (x_3, \lambda_3), x_4)=a_{i, 0, m-i}+b_k.$$
We choose $x_4$ so that $x_1, x_2, (x_3,\lambda_3), x_4$ is a virtual positive configuration of points in the affine building. We can see that $x_4$ is an actual point in the building by using the fact that $\lambda$ is dominant and Lemma~\ref{independent of triangulation}. The values of the coordinates $f^t_{ijk}(x_1, (x_3, \lambda_3), x_4)$ are chosen so that when $(x_3, \lambda_3)$ can be represented by an actual point $x_3'$, we have that $x_1, x_3'$ and $x_4$ lie on a geodesic.

The proof then comes down to two lemmas:

\begin{lemma} The virtual configuration $x_1, x_2, (x_3, \lambda_3), x_4$ is equivalent to the actual configuration $x_1, x_2, x_3', x_4$ if and only if

$$f^t_{ijk}(x_1, x_2, x_4)=a_{ijk}+b_k,$$
$$f^t_{ijk}(x_2, (x_3, \lambda_3), x_4)=a_{0, i, m-i}+b_k.$$

\end{lemma}

\begin{lemma} Given that
$$f^t_{ijk}(x_1,, x_2, (x_3,\lambda_3))=a_{ijk}$$
and
$$f^t_{ijk}(x_1, (x_3, \lambda_3), x_4)=a_{i, m-i, 0}+b_k,$$
then we have
$$f^t_{ijk}(x_1, ,x_2, x_4)=a_{ijk}+b_k,$$
$$f^t_{ijk}(x_2, (x_3, \lambda_3), x_4)=a_{0, i, m-i}+b_k$$
if and only if 
\begin{multline}
f_{i,j,k+1}^t(x_1, x_2 (x_3,\lambda_3))+f_{i,j,k-1}(x_1, x_2 (x_3,\lambda_3)) \leq \\ f_{i,j+1,k}^t(x_1, x_2 (x_3,\lambda_3))+f_{i+1,j,k}(x_1, x_2 (x_3,\lambda_3)).
\end{multline}
\end{lemma}

The second lemma is a straightforward computation using the tropicalized mutation relations, which relate the co-ordinates in one triangulation to the co-ordinates in another.

Let us prove the first lemma. First suppose that $(x_3,\lambda_3)$ is represented by the actual point $x_3'$. Then by the construction of $x_4$, we have that $x_1, x_3', x_4$ lie on a geodesic in that order, with $d(x_4, x_3')=\lambda$. Then it is clear that $f^t_{ijk}(x_1, x_2, x_4)$ and $f^t_{ijk}(x_2, (x_3, \lambda_3), x_4)$ have co-ordinates as above.

On the other hand, if $f^t_{ijk}(x_1, x_2, x_4)$ and $f^t_{ijk}(x_2, (x_3, \lambda_3), x_4)$ have co-ordinates as above, then if we let $x_3'$ be the point along the geodesic between $x_1$ and $x_4$ such that $d(x_4, x_3')=\lambda$, then 
$$f^t_{ijk}(x_2, x_3', x_4)=a_{0, i, m-i}+b_k=f^t_{ijk}(x_2, (x_3, \lambda_3), x_4).$$
Gluing to the triangle formed by $x_1, x_2, x_4$, we find that the configuration $x_1, x_2, (x_3, \lambda_3), x_4$ is equivalent to the actual configuration $x_1, x_2, x_3', x_4$.

\end{proof}

\subsection{Laminations for the $\A$-space of a general surface}

Our goal now is to define laminations on a general surface $S$, possibly with marked points. The basic idea is that any surface can be glued together from triangles. Once we know laminations on a triangle and how to glue together laminations along an edge, we know how to construct laminations on a general surface. There is one subtlety, which we will discuss below.

We have defined laminations on $S$ when $S$ is a disc with marked points. We will sometimes refer to such $S$ as a ``polygon.'' Proposition~\ref{tropical relations} shows that coordinates on laminations on a polygon satisfy the tropical relations, while Theorem~\ref{tropical points} shows that these coordinates determine the lamination completely. Thus for $S$ a disc with marked points, we have an identification between tropical points of $\A_{G,S}$ and $G$-laminations on $S$.

It is clear that our construction is compatible with cutting and gluing of polygons; laminations are completely determined by their coordinates, and the coordinates are constructed locally with respect to a triangulation, with coordinates associated to the edges and triangles of a triangulation. We now wish to extend this to surfaces. We will first analyze the case of polygons more closely.

Suppose we have two laminations $$(p_1,\lambda_1), (p_2,\lambda_2), \dots, (p_n,\lambda_n)$$ and $$(q_1,\mu_1), (q_2,\mu_2), \dots, (q_l,\mu_l)$$ on an $n$-gon and an $l$-gon. We may glue laminations on these polygons (in a unique way) by gluing the edge $(p_1, \lambda_1)$, $(p_2,\lambda_2)$ to the edge $(q_2, \mu_2)$, $(q_1,\mu_1)$ if and only if the two virtual configurations of two points  $(p_1, \lambda_1)$, $(p_2,\lambda_2)$ and $(q_2, \mu_2)$, $(q_1,\mu_1)$ are equivalent. If they are, by choosing  $\lambda'_1$ to be a coweight larger than $\lambda_1$ and $\mu_2$ and by choosing $\lambda'_2$ to be a coweight larger than $\lambda_1$ and  $\mu_1$, we get a virtual configuration of two points $(p'_1,\lambda'_1)$, $(p'_2,\lambda'_2)$ such that $$(p'_1, \lambda'_1), (p'_2,\lambda'_2), (p_3, \lambda_3), \dots, (p_n,\lambda_n)$$ is equivalent to $$(p_1,\lambda_1), (p_2,\lambda_2), \dots, (p_n,\lambda_n).$$ Similarly, we have that $$(q_1,\mu_1), (q_2,\mu_2), \dots, (q_l,\mu_l)$$ is equivalent to $$(q_1,\lambda'_2), (q_2,\lambda'_1), \dots, (q_l,\mu_l).$$
Additionally, we may translate this configuration so that $(q_1,\lambda'_2), (q_2,\lambda'_1)$ and $(p'_1,\lambda'_1)$, $(p'_2,\lambda'_2)$ coincide by moving $q_i$, $i>2$ to obtain a virtual configuration $$(p'_2, \lambda'_2), (p'_1, \lambda'_1), (q'_3, \mu_3), \dots, (q'_l,\mu_l)$$ which is equivalent to $$(q_1, \mu_1), (q_2,\mu_2), \dots, (q_l,\mu_l).$$ Then the we can glue to get a lamination $$(p'_2,\lambda'_2), (p_3, \lambda_3), \dots, (p_n,\lambda_n), (p'_1,\lambda'_1), (q'_3, \mu_3), \dots, (q'_l,\mu_l)$$ on an $n+l-2$-gon.

The result of this discussion is that if we are gluing polygons, we may have to replace one of the virtual points of our configuration, $(p, \lambda)$, with another virtual point, $(p', \lambda')$, that gives an equivalent configuration. The reason is that if a vertex belongs to several different triangles, the value of $\lambda$ at this vertex may be different in each triangle, and we have to choose a $\lambda$ large enough for all the triangles containing this vertex simultaneously. 

Lemma~\ref{independent of triangulation} gave us a way to explicitly find such a $\lambda$: Start with a configuration of $n$ flags in $G(\cK)/U(\cK)$, and choose some triangulation of the $n$-gon. Then if a vertex belongs to $r$ triangles, the virtual point at the vertex in each of these triangles might be $(p,\lambda_i)$, $i=1, 2, \dots, r$. By choosing $\lambda$ larger than all the $\lambda_i$, we may replace all these $(p,\lambda_i)$ by a single virtual point $(p,\lambda)$.

By Lemma~\ref{independent of triangulation}, this choice of $\lambda$ then works to give us a virtual configuration of $n$ points regardless of the triangulation. Thus we know that we just have to choose $\lambda$ larger than the value which is necessary for each of the triangles to which a vertex belongs. It turns out that in a general surface the same holds, though this is not obvious a priori, as we will be defining laminations on a surface via {\em infinite} virtual positive configurations of points in the affine building, and therefore each vertex belongs to infinitely many triangles. We discuss this below.

For a general surface $S$ with marked points, we consider the cyclic set at  $\infty$ formed by all the lifts of these marked points to the universal cover of $S$. This set $C$ is infinite, and comes with a free action of $\pi_1(S)$. Then we may define

\begin{definition} A $G$-lamination on a surface $S$ with marked points is a virtual positive cyclic configuration of points in the affine building parameterized by the set $C$, equipped with an equivariant action of $\pi_1(S)$.
\end{definition}

For more on the definition of this cyclic set, see section 2 of this paper or the introduction of \cite{FG1}. An equivariant action of $\pi_1(S)$ means that for any $\gamma \in \pi_1(S)$, pulling back the configuration by the map $\gamma$ gives an equivalent configuration.

Almost everything that holds for a finite virtual positive configuration carries over to the infinite case. We shall say that for an infinite set of points, two positive configurations parameterized by this cyclic infinite set are equivalent if for every finite subset, the configurations are equivalent.

One might worry that because we have an infinite number of points, there is not suitable choice of a large enough $\lambda$ at a given vertex. But if we start with a triangulation of a surface and lift this to the universal cover, although there are an infinite number of triangles at each vertex, there are only a finite number triangles up to the action of of the stabilizer of this vertex inside $\pi_1(S)$. Thus, there are only a finite number of values $\lambda_i$ that constrict our choice of $\lambda$ at this vertex. Thus we have

\begin{theorem} Let $S$ be a (hyperbolic) surface with marked points, and let $C$ be its cyclic set at $\infty$. Associated to any tropical point of $\A_{G,S}(\Zt)$ there is a $\pi_1(S)$-equivariant virtual positive configuration of points in the affine building of $G$ parameterized by $C$. This configuration is unique up to equivalence.
\end{theorem}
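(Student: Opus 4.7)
The plan is to reduce to the polygon case (Theorem~\ref{tropical points}) by lifting the tropical point to a $\cK_{>0}$-point and then applying the local construction of section~5 equivariantly on the universal cover.

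First, I would use surjectivity of $-\val \colon \A_{G,S}(\cK_{>0}) \to \A_{G,S}(\Zt)$ to lift the tropical point to some $x \in \A_{G,S}(\cK_{>0})$. Via the positive atlas identifying $\A_{G,S}$ with $\pi_1(S)$-equivariant twisted positive cyclic configurations of principal affine flags parameterized by $C$, the point $x$ yields a collection of principal flags $\{F_c\}_{c \in C}$ in $G/U^-(\cK)$ together with a homomorphism $\rho \colon \pi_1(S) \to G(\cK_{>0})$ satisfying $F_{\gamma c} = \rho(\gamma) F_c$. Choose lifts $v_{c,1}, \dots, v_{c,m}$ of each $F_c$, taken $\pi_1(S)$-equivariantly (pick lifts on a set of orbit representatives and extend by $\rho$).

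Second, I would build the virtual configuration. Fix an ideal triangulation $\tau$ of $S$ and lift to a $\pi_1(S)$-invariant triangulation $\tilde{\tau}$ of the universal cover, whose vertex set is $C$. Crucially, the triangles of $\tilde\tau$ form only finitely many $\pi_1(S)$-orbits, and each vertex belongs to finitely many triangles modulo its stabilizer. By Lemma~\ref{large enough}, for each orbit of triangles there is a bound on the $\lambda_c$ at its vertices beyond which the chosen lifts become good. Taking a maximum over these finitely many orbit constraints, and then transporting by $\rho$, I obtain a $\pi_1(S)$-equivariant assignment $c \mapsto \lambda_c$ that is simultaneously good for every triangle of $\tilde{\tau}$. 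By Lemma~\ref{independent of triangulation} (applied locally to any finite subpolygon spanned by a finite subset of $C$), these lifts remain good under any retriangulation, so the resulting $\cO$-modules $x_c$ (and their images $p_c$ in the affine building) give a well-defined virtual positive configuration $(p_c, -\lambda_c)_{c \in C}$; Lemma~\ref{independence of lift} ensures independence of the lifts. Equivariance of the construction combined with the action of $\Lambda^n$ on families of virtual positive configurations ensures $\pi_1(S)$-equivariance up to the defined equivalence.

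For uniqueness, suppose two such $\pi_1(S)$-equivariant virtual positive configurations have the same tropical coordinates $f_{ijk}^t$ on every triangle of $\tilde\tau$. Restricting to any finite subpolygon cut out of the universal cover by a finite subset of $C$, the two restrictions have identical tropical coordinates, so Theorem~\ref{tropical points} gives equivalence on every such finite subpolygon. By the adopted definition of equivalence of configurations parameterized by an infinite cyclic set (equivalence on every finite subset), the two configurations are globally equivalent.

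The main obstacle, and the only genuinely new point beyond the polygon case, is the second step: producing a single equivariant choice of $\lambda_c$ that is large enough at every vertex simultaneously, despite $C$ being infinite. The $\pi_1(S)$-equivariance of $\tilde\tau$, together with cocompactness of the triangulation on $S$, reduces this to a finite compatibility problem, which is precisely what makes the gluing construction sketched in the preceding subsection work globally rather than only locally on each triangle.
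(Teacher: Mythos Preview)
Your proposal is correct and follows essentially the same approach as the paper: the argument there is the informal discussion immediately preceding the theorem statement, and its crux is precisely your ``main obstacle''---that although infinitely many triangles meet each vertex of $\tilde\tau$, there are only finitely many up to the action of the stabilizer of that vertex in $\pi_1(S)$, so a single equivariant choice of $\lambda_c$ dominating those finitely many constraints suffices. Your treatment is in fact more explicit than the paper's (you spell out the lift to $\A_{G,S}(\cK_{>0})$, the equivariant choice of flag lifts, and the uniqueness argument via restriction to finite subpolygons and Theorem~\ref{tropical points}), but the strategy and the key finiteness observation are the same.
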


The tropical coordinates on this lamination come from a triangulation of the surface $S$. We lift this triangulation to a triangulation of the disc with the cyclic set $C$ at the boundary. On each triangle, we have a virtual positive configuration $(p_i, \lambda_i)$ of points in the affine building, which comes from a virtual positive configuration of points $(x_i, \lambda_i)$ in the affine Grassmanian, which in turn comes from a positive configuration of flags $F_i \in G/U^-(\cK)$. The tropical functions are

$$f_{ijk}^t ((p_a,\lambda_a),(p_b\lambda_b),(p_c\lambda_c)) :=f_{ijk}^t ((x_a,\lambda_a),(x_b\lambda_b),(x_c\lambda_c)) = \val(f_{ijk} (F_a,F_b,F_c)).$$

These functions satisfy the tropical relations, and therefore completely determine lamination.

\section{Laminations for the $\X$ space}

\subsection{Laminations for the $\X$-space of a disc}

\subsubsection{Configurations of cones}
We will now treat the dual case of laminations for the $\X$ space. For the $\X$ space, we now work with the group $G=PGL_n$. Let $S$ be a polygon. Recall that any positive configuration $p_1, p_2, \dots, p_n$ of points in the affine building comes in a family of equivalent virtual configurations  $(q_1,\lambda_1), (q_2,\lambda_2), \dots, (q_n,\lambda_n)$, where the $q_i$ vary with the $\lambda_i$, and $(p_1,0), (p_2,0), \dots, (p_n,0)$ is among these equivalent virtual configurations. We will now define a related concept which has a slightly different flavor.

Start with a positive configuration of $n$ (principal) flags $F_1, \dots, F_n$, $F_i \in GL_n/U(\cK)$. Note that here we no longer require that the flags have determinant $1$. As before, we can choose  $v_{i1}, \dots, v_{im}$ $1 \leq i \leq n$ to be some lifts of these flags to $GL_n(\cK)$, then choose $\lambda_i$, such that $$t^{-\lambda_{i1}}v_{i1}, \dots, t^{-\lambda_{im}}v_{im}$$ is a good lift of $F_i \cdot \lambda_i$. Here the $\lambda_i$ are coweights for $GL_n$.

Let $x_i$ be the $\cO$-module spanned by $t^{-\lambda_{i1}}v_{i1}, \dots, t^{\lambda_{im}}v_{im}$. We view $x_i$ inside the affine Grassmanian for $PGL_n$ by considering this $\cO$-module up to scaling by any element of $\cK$. We will only care about the $\lambda_i$ up to their image in the coweight lattice for $PGL_n$ (as opposed to the coweight lattice for $GL_n$) and we retain the notation $\lambda_i$ for the image in the coweight lattice of $PGL_n$. Then we can associate to $F_1, \dots, F_n$ the virtual positive configuration $(x_1,-\lambda_1), \dots, (x_n,-\lambda_n)$, where the $x_i$ form a positive configuration inside the affine Grassmanian for $PGL_n$. We can then map to the affine building to get a virtual positive configuration $$(q_1,-\lambda_1), (q_2,-\lambda_2), \dots, (q_n,-\lambda_n).$$ Different choices of $\lambda_i$ give different configurations $q_i$ which trace out a cone in the affine building. We would like to forget the data of parameterization by $\lambda_i$ and just think about the asymptotics of these cones. In other words, we are most interested in the positive configurations $q_1, \dots, q_n$ that arise as the $\lambda_i$ get large. This motivates the following definition.

Let us choose a base virtual configuration $$(q_1^0,-\lambda_1^0), (q_2^0,-\lambda_2^0), \dots, (q_n^0,-\lambda_n^0)$$ coming from $F_1, \dots, F_n$. Then as the $\lambda_i$ varies over the range $\lambda_i \geq \lambda_i^0$, $q_i$ varies over a subset $Q_i$ of the building which we call a {\emph cone}. We will say that these cones are {\emph based) at $q_i^0$.

\begin{definition} A {\it positive configuration of cones} in the affine building consists of $n$ cones of points $Q_1, Q_2, \dots, Q_n$, where each $Q_i$ is based at $q_i^0$. If we have a set of representative elements $q_i \in Q_i$, we will say that the positive configuration of points $q_1, \dots, q_n$ is contained in the positive configuration of cones $Q_1, \dots, Q_n$, or that $q_1, \dots, q_n$ is a representative positive configuration of points inside the positive configuration of cones $Q_1, \dots, Q_n$.
\end{definition}

\begin{rmk} $Q_i$ is exactly the set of points that comes from the action of the moniod $\Lambda_+$ on $q_i^0$.
\end{rmk}

Whereas before we were interested in virtual positive configurations, we now are interested in these positive configurations of cones. 

We will consider these positive configurations of cones up to equivalence. We define two configurations of cones $P_i$ and $Q_i$ to be equivalent if they contain {\it some} representative configurations $p_1, p_2, \dots, p_n$ and $q_1, q_2, \dots, q_n$ which are equivalent in the previous sense (which we defined when discussing laminations for the $\A$ space). Let us elaborate on this. Consider the configurations of sub-cones of of the configurations of cones $P_i$ and $Q_i$ based at $p_i$ and $q_i$ respectively. Then the corresponding representative configurations of points within these sub-cones are all equivalent, by our previous analysis of actions of $\Lambda^+$ on positive configurations of points in the affine building. Thus we are really studying the {\em asymptotics} of these cones.

Note that we lose some degrees of freedom in passing from virtual configurations to configurations of cones because we forget the data of the $\lambda_i$. It is as if we were considering equivalence classes of virtual positive configurations of $n$ points modulo the action of $\Lambda^n$.

\subsubsection{Functions on $\X_{G,S}$} We now recall the functions on the space $\X_{G,S}$ for $S$ disc with marked points, and also describe their tropicalization. $\X_{G,S}$ is the space of configurations of points in $G/B$. The functions on $\X_{G,S}$ come from a triangulation, and there are functions attached to each face and each interior (non-boundary) edge of the triangulation. For each face of a triangulation of the $n$-gon, we have a set of functions $g_{ijk}$, where $i+j+k=m$ and $i, j, k > 0$. Suppose that some triangle has flags $B_1, B_2, B_3$ at its vertices, where $B_i \in G/B$. Then

$$g_{ijk} (B_1,B_2,B_3) = \frac{f_{i-1,j+1,k}(F_1,F_2,F_3) \cdot f_{i,j-1,k+1}(F_1,F_2,F_3) \cdot f_{i+1,j,k-1}(F_1,F_2,F_3)}{f_{i+1,j-1,k}(F_1,F_2,F_3) \cdot f_{i,j+1,k-1}(F_1,F_2,F_3) \cdot f_{i-1,j,k+1}(F_1,F_2,F_3)}$$
where $F_1, F_2, F_3$ are any lifts of $B_1, B_2, B_3$ from $G/B$ to $GL_n/U^{-}$.

Moreover, for any edge of the triangulation, we can look at the two triangles it belongs to, and construct a set of functions $g_{ij}$, for $i+j=m$ and $i, j> 0$. If the four flags $B_1, B_2, B_3, B_4$ form two triangles $B_1, B_2, B_3$ and $B_3, B_4, B_1$ which share the edge $B_1, B_3$, we have the functions 

$$g_{ij} (B_1,B_2,B_3,B_4) = \frac{f_{i-1,1,j}(F_1,F_2,F_3) \cdot f_{j-1,1,i}(F_3,F_4,F_1)}{f_{i,1,j-1}(F_1,F_2,F_3) \cdot f_{j,1,i-1}(F_3,F_4,F_1)}$$
where $F_1, F_2, F_3, F_4$ are any lifts of $B_1, B_2, B_3, B_4$ from $G/B$ to $GL_n/U^{-}$.

It is shown in \cite{FG1} that these functions are independent of the choice of these lifts to $GL_n/U^{-}$ for both the face and edge functions. In particular, replacing $F_i$ with $F_i \cdot \lambda_i$ does not change the value of these functions.

We will now define the tropicalization of these functions. For a positive configuration of cones $P_1, P_2, \dots, P_n$ in the affine building, choose some representative configuration $p_1, p_2, \dots, p_n$. Suppose that this came from taking a positive configuration of flags $F_i$, choosing large enough $\lambda_i$, forming the corresponding virtual configuration $(x_i,\lambda_i)$ of points in the affine Grassmanian, and then taking the corresponding virtual positive configuration $(p_i,\lambda_i)$ of points in the affine building. Then if a triangle has $p_a, p_b, p_c$ at its vertices, where the $p_i$ are points of the building, then we define
$$g_{ijk}^t (p_a,p_b,p_c) = (f_{i-1,j+1,k}^t + f_{i,j-1,k+1}^t + f_{i+1,j,k-1}^t - f_{i+1,j-1,k}^t - f_{i,j+1,k-1}^t - f_{i-1,j,k+1}^t) (p_a,p_b,p_c)$$
for $i+j+k=m$ and $i, j, k > 0$. One can make a similar definition for $g_{ijk}^t (x_a,x_b,x_c)$.

Let us try analyze the lifting involved in defining this function. Recall that
$$f_{ijk}^t ((p_a,\lambda_a),(p_b,\lambda_b),(p_c,\lambda_c)) = -\val(f_{ijk} (F_a,F_b,F_c))$$
so that
$$f_{ijk}^t (p_a,p_b,p_c)=-\val(f_{ijk} (F_a \cdot -\lambda_a,F_b \cdot -\lambda_,F_c \cdot -\lambda_c)).$$
Then we have by an easy calculation that
$$g_{ijk}^t (p_a,p_b,p_c) = -\val(g_{ijk} (F_a \cdot -\lambda_a,F_b \cdot -\lambda_,F_c \cdot -\lambda_c)) = -\val(g_{ijk} (F_a,F_b,F_c)),$$
so it is clear that $g_{ijk}^t$ does not depend on the choice of representative of the configuration of cones.

Similarly, for a positive configuraiton of four points in the building $p_a, p_b, p_c, p_d$, we define
$$g_{ij}^t (p_a,p_b,p_c,p_d) := f_{i-1,1,j}^t (p_a,p_b,p_c) + f_{j-1,1,i}^t (p_c,p_d,p_a) - f_{i,1,j-1}^t (p_a,p_b,p_c) - f_{j,1,i-1}^t (p_c,p_d,p_a)$$
for $i+j=m$ and $i, j> 0$. One can similarly define $g_{ij}^t (x_a,x_b,x_c,x_d)$ 

One again shows that
$$g_{ij}^t (p_a,p_b,p_c,p_d) = -\val(g_{ij}(F_a,F_b,F_c,F_d))$$
and that therefore the choice of configuration $p_i$ within the configuration of cones does not affect the values of these functions.

The following theorem is similar to the $\A$ case:

\begin{theorem} Let $S$ be a disk with $n$ marked points. There is a bijection between tropical points of $\X_{G,S}(\Zt)$ and positive configurations of cones in the affine building up to equivalence.

Given a positive configuration of cones, take one representative configuration of points $p_1, p_2, \dots, p_n$ in the affine building. It comes from positive configuration of points $x_i$ in the affine Grassmanian, which in turn comes from a positive configuration of flags $F_i \in GL_n/U^-(\cK)$ and a choice of large enough coweights $\lambda_i$. Given a trianglulation of the $n$-gon, we associate to each triangle $p_a, p_b, p_c$ the functions $g_{ijk}^t (p_a,p_b,p_c)$, and to each edge $p_a, p_c$ bordering two triangles $p_a, p_b, p_c$ and $p_b, p_c, p_d$, we associate the functions $g_{ij}^t (p_a,p_b,p_c,p_d)$.

These functions satisfy the tropical relations, and therefore give a well-defined point of  $\X_{G,S}(\Zt)$. The values of these functions completely determine the family of positive configurations of points in the affine building up to equivalence, and vice versa.

\end{theorem}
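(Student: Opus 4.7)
The plan mirrors the proof of Theorem~\ref{tropical points} for the $\A$ space, with the additional ingredient that we must pass from virtual positive configurations (where the coweights $\lambda_i$ are recorded) to families of positive configurations (where the $\lambda_i$ have been forgotten).

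First I would establish well-definedness of $g_{ijk}^t$ and $g_{ij}^t$ on families. A family is the $\Lambda^n$-orbit of some virtual positive configuration $(x_i,\lambda_i)$, and shifting $\lambda_i \mapsto \lambda_i + \mu_i$ changes each $f_{ijk}^t$ by the additive term $\mu_p \cdot \omega_i + \mu_q \cdot \omega_j + \mu_r \cdot \omega_k$ recorded at the end of Section 5.4. Plugging this shift into the explicit ratios defining $g_{ijk}$ and $g_{ij}$, the linear corrections at each vertex telescope to zero, so $g_{ijk}^t$ and $g_{ij}^t$ descend to honest functions on families. The same cancellation yields independence of the auxiliary lifts $F_i \in GL_n/U^-$ of the underlying $B_i \in G/B$, recovering at the tropical level the classical invariance statement of \cite{FG1}.

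Next I would verify that these functions satisfy the defining tropical relations of $\X_{G,S}(\Zt)$, in precise analogy with Proposition~\ref{tropical relations}: the $\cK_{>0}$-valued $g_{ijk}$ and $g_{ij}$ satisfy the subtraction-free cluster relations on $\X_{G,S}$, and applying $-\val$ (which converts $\cdot, /, +$ into $+, -, \max$) produces the tropicalized relations. For the bijection, surjectivity is immediate: any tropical point of $\X_{G,S}(\Zt)$ is $-\val$ of some $\cK_{>0}$-point, and the construction of Section 5.3 applied to any lift of the associated configuration of flags in $G/B(\cK)$ to $GL_n/U^-(\cK)$ produces a family of positive configurations with the desired coordinates.

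The substantive step is injectivity: two families with identical $g^t$-coordinates must be equivalent. My strategy is to reduce this to the already-proved $\A$-case by arguing that the $g^t$-invariants, taken together with the pairwise building distances $d(p_a,p_b)$, encode exactly the quotient of the $f^t$-invariants by the simultaneous $\Lambda^n$-action at the vertices. Given two families with matching $g^t$-data, I would pick representatives and choose coweights $\nu_i$ so that the resulting virtual positive configurations have identical $f^t$-coordinates; Theorem~\ref{tropical points} then declares the two adjusted configurations equivalent, and this equivalence is preserved under the $\Lambda^n$-action, hence descends to equivalence of the original families. Finally, one extends from the disc to a general surface by the cut-and-glue procedure of Section 5.5, noting that $g$-functions on the two sides of an interior edge glue consistently because they are defined from any pair of triangles sharing that edge.

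The main obstacle is the linear-algebraic content of that last step: showing that the map from $f^t$-coordinates to $(g^t, \text{distances})$-coordinates has kernel exactly the diagonal $\Lambda^n$-shifts, so that no extra ambiguity is introduced. Concretely, one must invert the explicit ratio formulas defining $g_{ijk}$ and $g_{ij}$ to recover all $f_{ijk}$ up to a torus scaling at each vertex; the edge functions $f_{ij0}^t$ are controlled by the building distances, and an induction on $i+j+k$ inside each triangle recovers the face values $f_{ijk}^t$ from the $g_{ijk}^t$ once one chooses a normalization at each vertex. This is the place where the formal structure of the cluster $\X$-to-$\A$ map enters, and verifying it rigorously is the technical heart of the proof.
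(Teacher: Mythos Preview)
Your proposal is essentially correct and follows the same architecture as the paper's proof: well-definedness of $g^t$ on families via torus invariance, tropical relations via $-\val$ of the positive $\cK_{>0}$-relations, surjectivity by lifting a tropical point to $\X_{G,S}(\cK_{>0})$, and injectivity by reducing to Theorem~\ref{tropical points}.

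Two small remarks. First, your sentence that the $g^t$-invariants ``together with the pairwise building distances $d(p_a,p_b)$'' encode the quotient by $\Lambda^n$ is phrased backwards: the distances are \emph{not} invariants of the family (they change under the $\Lambda^n$-action), so they cannot be part of a description of the quotient. What you actually use, and what works, is that $g^t$ alone determines $f^t$ up to $\Lambda^n$; the distances only enter after you have chosen a representative, as auxiliary data to pin down a specific $\Lambda^n$-translate for comparison. Second, for injectivity the paper is less computational than you: rather than explicitly inverting the ratio formulas, it simply invokes the classical fact from \cite{FG1} that the $g$-coordinates determine the flags $F_i$ (hence the $f_{ijk}$) up to the right $T(\cK)$-action, and then observes that tropicalizing this statement gives exactly ``$g^t$ determines $f^t$ up to $\Lambda^n$''. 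Your explicit induction on $i+j+k$ inside each triangle is a valid way to make that step concrete, and you are right to flag it as where the $\X$-to-$\A$ structure enters.
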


\begin{proof}

Starting with a point of $\X_{G,S}(\Zt)$ we will construct a positive configuration of $n$ cones in the affine building for $G$. Consider the space $\X_{G,S}(\cK_{>0})$. We can choose some triangulation, and then choose as we like the values of $g_{ijk}$ and $g_{ij}$ for each triangle and edge. In particular, we choose the values of these functions to lie in $\cK_{>0}$, such that they have prescribed valuations corresponding to our point of $\X_{G,S}(\Zt)$. This is always possible, and gives us a point of $\X_{G,S}(\cK_{>0})$. This gives a positive configuration of points $B_1, \dots, B_n$ in $G/B(\cK)$. The results of \cite{FG1} show that one can lift this to a positive (twisted) configuration of points in $G/U^{-}(\cK)$ (this is not explicitly stated, but comes from examining equation 5.2 on page 73, theorem 7.3 on page 96, and equation 8.9 on page 119 of \cite{FG1}). On this configuration of flags $F_1, \dots, F_n$, the functions $f_{ijk}$ are defined, and they are related to the functions $g_{ijk}$ and $g_{ij}$ by the relations above above.

The configuration of flags gives rise to a configuration of cones in the affine building out of which we can choose one representative configuration $p_1, \dots, p_n$. The values of $g_{ijk}^t$ and $g_{ij}^t$ on $p_1, \dots, p_n$ will be negative the valuations of the values of $g_{ijk}$ and $g_{ij}$ on $F_1, \dots, F_n$ by construction. Moreover, these values will be independent of our choice of representatives of these cones, because we showed that the values of $g_{ijk}^t$ and $g_{ij}^t$ were independent of $\lambda_i$.

Thus $g_{ijk}^t$ and $g_{ij}^t$ really are functions of the positive configuration of cones $P_i$ containing $p_1, \dots, p_n$, and not just of the configuration itself.

Thus starting from any point of $\X_{G,S}(\cK_{>0})$ we obtain a positive configuration of cones in the affine building. The functions $g_{ijk}^t$ and $g_{ij}^t$ on these positive configurations of cones give a positive configuration of cones corresponding to each tropical point of $\X_{G,S}(\Z^t)$. As explained in section 2, the co-ordinate transformations under changes of triangulation behave tropically, and we get a well-defined point of $\X_{G,S}(\Z^t)$.

It is clear that any two positive configurations of cones which are equivalent have the same values for the functions $g_{ijk}^t$ and $g_{ij}^t$, because the cones contain some equivalent configurations on which the functions $f_{ijk}^t$ take identical values, and the functions $g_{ijk}^t$ and $g_{ij}^t$ are determined by the functions $f_{ijk}^t$.

Finally, we need to show that different points of $\X_{G,S}(\cK_{>0})$ which have the same valuations give equivalent configurations of cones. Suppose we have two positive configurations of cones on which the functions $g_{ijk}^t$ and $g_{ij}^t$ are identical. Then let $p_1, \dots, p_n$ and $q_1, \dots, q_n$ be representatives of these cones. Then although the functions $f_{ijk}^t$ are not determined by $g_{ijk}^t$ and $g_{ij}^t$, they are determined up to the action of $\Lambda^n$. Therefore there are two other configurations $p_1', \dots, p_n'$ and $q_1', \dots, q_n'$ that come from applying the action of $\Lambda_+^n$ on $p_1, \dots, p_n$ and $q_1, \dots, q_n$, respectively, such that the functions $f_{ijk}^t$ have identical values on the configurations $p_1', \dots, p_n'$ and $q_1', \dots, q_n'$. Because positive configurations are determined by the functions $f_{ijk}^t$, $p_1', \dots, p_n'$ and $q_1', \dots, q_n'$ are equivalent as positive configurations of points in the affine building, and therefore our original configurations of cones in the affine building are equivalent.

\end{proof}

We now discuss and attempt to clarify the idea behind $\X$-laminations. In our presentation, the definition of the functions $g_{ijk}$ and $g_{ij}$, tropical or not, depended on some choice: on higher Teichmuller space, the functions depended on the choice of a principal flag (an element of $G/U$) dominating a regular flag (an element of $G/B$), while on its tropicalization, the functions depend on a configuration of points representing a configuration of cones. The choice of a dominating principal flag or a point within a cone is analogous to the choice of a horocycle at a cusp in classical Teichmuller theory. The functions are defined using principal flags or a configuration of points, but they ultimately only depend on the flags and configuration of cones.

This means that we lose some degrees of freedom in going from $\A_{G,S}$ to $\X_{G,S}$. For example, on a triangle, the space $\X_{G,S}$ doesn't have co-ordinates corresponding to the boundary edges. On the other hand, the gluings between triangles are more interesting: for the space $\A_{G,S}$ we can only glue two triangles if they have the same edge functions, where as for the space $\X_{G,S}$ any two triangles have an interesting space of gluings based on the co-ordinates assigned to the edges. The different possible gluings are related to each other by shearing. This is one important feature that distinguishes the spaces $\A_{G,S}$ and $\X_{G,S}$. This will become even more important in the next section.

\subsection{Laminations for the $\X$-space of a general surface}

We now define $\X$-laminations on a general open surface $S$, possibly with marked points. Again, because any open surface can be glued together from triangles, knowing laminations on a triangle and how they can glue together to get laminations on a quadrilateral will allow us to construct laminations on a general surface.

We have defined laminations on $S$, where $S$ is a ``polygon." The previous section gave an identification between tropical points of $\X_{G,S}$ and $G$-laminations on $S$.

It is clear that our construction is compatible with cutting and gluing of polygons. In the $\A$ case, we took some care to show that in gluing, we didn't have to increase the values of $\lambda_i$ indefinitely. However, because here we are interested in positive configurations of cones, we do not have this concern.

For a general surface $S$ with marked points, recall the cyclic set at  $\infty$ formed by all the lifts of these marked points to the universal cover of $S$. This set $C$ is infinite, and comes with a free action of $\pi_1(S)$. Then we may define

\begin{definition} A $G$-lamination for the $\X$-space of a surface $S$ with marked points is the data of a positive cyclic configuration of cones in the affine building parameterized by every finite subset of the set $C$, compatible under restriction from one finite set to another, and equipped with an action of $\pi_1(S)$ on these configurations of cones.
\end{definition}

We should understand the action of $\pi_1(S)$ as follows. If some element $\gamma$ of the fundamental group moves the set finite subset $S \subset C$ to the finite subset $S' \subset C$, then the configuration of cones on $S$ is equivalent to the configuration of cones on $S'$.

Almost everything that holds for finite positive configurations of cones carries over to the infinite case. We shall say that two $G$ laminations for the $\X$ space are equivalent if for any finite subset of $C$, the two positive configurations of cones parameterized by this finite set are equivalent.

\begin{theorem} Let $S$ be a (hyperbolic) surface with marked points, and let $C$ be its cyclic set at $\infty$. Associated to any tropical point of $\X_{G,S}(\Zt)$ there is a $G$-lamination as defined above. This $G$-lamination is unique up to equivalence.

The tropical coordinates on this lamination come from a triangulation of the surface $S$. We lift this triangulation to a triangulation of the disc with the cyclic set $C$ at the boundary. On each triangle or quadrilateral, we have a positive configuration of cones within which we can take a representative positive configuration $p_i$ of points in the affine building, which comes from a positive configuration of points $x_i$ in the affine Grassmanian, which in turn comes from a positive configuration of flags $F_i \in GL_n/U^-(\cK)$. The tropical functions are

$$g_{ijk}^t (p_a,p_b,p_c) = g_{ijk}^t (x_a,x_b,x_c) = -\val(g_{ijk} (F_a,F_b,F_c))$$

and

$$g_{ij}^t (p_a,p_b,p_c,p_d) = g_{ij}^t (x_a,x_b,x_c,x_d) = -\val(g_{ij} (F_a,F_b,F_c,F_d)).$$

These functions satisfy the tropical relations, and therefore completely determine lamination.

\end{theorem}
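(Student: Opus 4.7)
The plan is to reduce the general surface case to the polygonal case already established, by lifting a triangulation to the universal cover and assembling the resulting local data in a $\pi_1(S)$-equivariant way. Fix a triangulation $\tau$ of $S$ with vertices at the marked points (and appropriate ideal triangulations at holes). Lift $\tau$ to a triangulation $\tilde\tau$ of the universal cover, whose vertices form the cyclic set $C$ at $\infty$ and on which $\pi_1(S)$ acts by combinatorial isomorphisms. A tropical point $\ell \in \X_{G,S}(\Zt)$ assigns integer values to the face- and edge-coordinates of $\tau$; pulling back along the covering map yields a $\pi_1(S)$-equivariant assignment of integers to the faces and interior edges of $\tilde\tau$.

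Next, given any finite subset $F \subset C$, I would choose a finite subcomplex $\tilde\tau_F$ of $\tilde\tau$ whose set of boundary vertices contains $F$ (for example, the union of all triangles meeting the convex hull of $F$ inside the Farey-type dual tree of $\tilde\tau$). This subcomplex is combinatorially a polygon triangulation, and the restriction of the pulled-back tropical data furnishes a tropical point for that polygon's $\X$-space. The previous theorem (the polygon case) then produces a family of positive configurations of points in the affine building, parameterized by the vertices of $\tilde\tau_F$, unique up to equivalence; restricting this family to $F$ gives the desired family of configurations indexed by $F$.

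I would then verify the two coherence properties. For compatibility under restriction $F \subset F'$: choose the subcomplexes so that $\tilde\tau_F \subset \tilde\tau_{F'}$, and observe that the polygon-case construction is local with respect to triangulation (Lemma~\ref{independent of triangulation} and the locality of the $g$-coordinates mean that the family on $\tilde\tau_{F'}$ restricts to the family on $\tilde\tau_F$). For $\pi_1(S)$-equivariance: since the pulled-back tropical data is $\pi_1(S)$-equivariant and the polygon-case construction is canonical, the family over $\gamma \cdot F$ is obtained from the family over $F$ by applying $\gamma$, which is the required equivariance. Uniqueness up to equivalence follows from the polygon-case uniqueness applied to every finite subset $F \subset C$: if two laminations in the sense of the definition induce the same tropical coordinates, they agree on every $\tilde\tau_F$ and hence are equivalent.

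The main subtlety, and the step I would treat most carefully, is the compatibility with restriction across the infinite cyclic set $C$. In the $\A$ case one had to worry about choosing $\lambda$'s large enough to glue across all triangles meeting a given vertex, which required the finiteness of triangles up to the stabilizer of the vertex in $\pi_1(S)$. Here, because we pass to \emph{families} of positive configurations and the $g$-coordinates are insensitive to the $T(\cK)^n$-action (hence to the $\Lambda^n$-choice inherent in passing to the building), no such largeness issue arises: the family produced on $\tilde\tau_{F'}$ literally restricts to the family on $\tilde\tau_F$ as a set of configurations in the building. The only thing to check is that two different choices of enclosing subcomplex $\tilde\tau_F$ yield equivalent families on $F$, which follows from the polygon case applied to any common refinement and the locality of the tropical $g$-coordinates on faces and interior edges.
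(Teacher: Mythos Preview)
Your proposal is correct and follows essentially the same approach as the paper. The paper itself does not give a formal proof of this theorem; it is stated after a short discussion explaining that the polygon case is compatible with cutting and gluing, that the $\lambda$-largeness issue from the $\A$ case disappears because one works with \emph{families} of positive configurations (exactly as you note), and that equivalence for infinite configurations is tested on finite subsets. Your write-up is a faithful and somewhat more detailed fleshing-out of that outline, including the explicit use of finite subcomplexes $\tilde\tau_F$ and the coherence checks, which the paper leaves implicit.
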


We give a word of caution. We cannot talk about the family of positive configurations of points parameterized by the entire set $C$. This is because of the way gluing works. Suppose we had a positive configuration of four cones. Let $p_1, p_2, p_3, p_4$ a representative configuration. Then it restricts on the triangle $123$ to the configuration $p_1, p_2, p_3$. However, not every configuration of cones containing $p_1, p_2, p_3$ comes from a configuration of cones containing $p_1, p_2, p_3, p_4$. Thus in gluing together configurations of cones, we may need to replace a cone that has its origin at a point $p_i$ by one that has its origin at another point inside the original cone. In terms of gluing representative configurations of points, we may have to replace one configuration by a larger configuration representing the same cone (one obtained by acting on the original configuration of points by an element of $\Lambda_+^n$) in order to glue.

Thus the problem that we avoided for the $\A$-space turns out to be important here: as we take larger and larger subsets of $C$, the actual configurations that are representative of our positive configurations of cones may get larger and larger, so that in the limiting case we don't actually have a configuration of points. Thus we can only choose representative configurations for configurations of cones parameterized by any finite subset of $C$.

Moreover, the more robust gluing allowed in the case of laminations on the $\X$-space allows us to have interesting monodromy around a hole in our surface, unlike the situation in the case of the $\A$-space. This is a reflection of the fact that edge co-ordinates allow for a kind of ``shearing." Let $x$ be a point of $C$, the boundary at infinity. Then if $\gamma \in \pi_1(S)$ preserves $x$, it may not preserve the points of the affine building in the cone attached to the point $x$; it may move $p_x$ to another point $p'_x$, where $p_x$ and $p'_x$ are related by the action of $\Lambda$ at the point $x$ (so that the cones based at $p_x$ and $p'_x$ overlap and are asymptotic).

\subsection{Laminations on a closed surface}

In this section, we give a proposed definition for laminations on a closed surface. However, the definition is not as easy to work with as in the case of surfaces with boundary. We will try to make the difficulties clear below.

The general approach for the definition of laminations on a closed surface is similar to the definition for closed surfaces: we consider the space $\X_{G,S}(\cK_{>0})$ of positive $G(\cK)$-bundles, and then take valuations to obtain the tropical points $\X_{G,S}(\Zt)$, which parameterize higher laminations. Our first task will be defining the set $\X_{G,S}(\cK_{>0})$ of positive $G(\cK)$-bundles in the case where the surface $S$ is closed.

Recall that $\X_{G,S}(\cK_{>0})$ was defined for a surface with boundary by forming the cyclic set ${\mathcal F}_{\infty}$ consisting of lifts of all cusps and marked points to the universal cover of $S$. This is a set with a cyclic order sitting at the boundary of the universal cover of $S$. Points of $\X_{G,S}(\cK_{>0})$ were then identified with positive configurations of flags parameterized by ${\mathcal F}_{\infty}$ along with an equivariant action of $\pi_1(S)$.

For a closed surface $S$, the set ${\mathcal F}_{\infty}$ is empty. Instead of using the cyclic set ${\mathcal F}_{\infty}$, we could have instead used the larger cyclic set ${\mathcal G}_{\infty}={\mathcal F}_{\infty} \cup {\mathcal G}'_{\infty}$, where ${\mathcal G}'_{\infty}$ consists of preimages on the boundary at infinity of all endpoints of non-boundary geodesics on $S$. The set ${\mathcal G}_{\infty}$ carries a natural action of $\pi_1(S)$.

We can then define the space $\X_{G,S}(\cK_{>0})$.

\begin{definition} $\X_{G,S}(\cK_{>0})$ is the space of $\pi_1(S)$-equivariant positive configurations of points in $G/B(\cK)$ parameterized by ${\mathcal G}_{\infty}$.
\end{definition}

It would be natural to try to define laminations for a closed surface $S$ as follows: A $G$-lamination for a closed surface $S$ is the data of a positive cyclic configuration of cones in the affine building parameterized by every finite set of the set ${\mathcal G}_{\infty}$, compatible under restriction from one finite set to another, and equipped with an action of $\pi_1(S)$ on these configurations of cones, up to equivalence. These positive cyclic configurations of cones are considered up to equivalence. However, this definition does not reduce to our previous definition in the case where $S$ has boundary, and for $G=SL_2$, it does not recover Thurston's measured laminations. The problem is that the definition is too refined--there are laminations that should be equivalent under our definition, but are not.

Thus we will have to make some more considerations before proceeding. To understand better the case of closed surfaces, we use the cutting and gluing properties of higher Teichmuller spaces. We follow here the treatment given in sections 6.9, 7.6-7.9 of \cite{FG1} and give tropical analogues of those arguments.

Let $S$ be a surface with or without boundary, and suppose we have a point $x$ of $\X_{G,S}(\cK_{>0})$. Then along any any closed (oriented) curve $\gamma$ on $S$ we may calculate the monodromy $\rho(\gamma)$ of the local system around that closed curve.

By the results of \cite{FG1}, we know that $\rho(\gamma)$ will lie in $G(\cK_{>0})$. A result of Lusztig (\cite{Lu2}, Theorem 5.6) gives us that

\begin{theorem}
Let $g \in G(\cK_{>0})$. Then there exists a unique split maximal torus of $G$ containing $g$. In particular, $g$ is regular and semi-simple. 
\end{theorem}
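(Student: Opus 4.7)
The plan is to reduce the statement to Lusztig's structure theory of total positivity by first proving the regular semisimple part, from which uniqueness of the maximal torus follows immediately. The key input is that any totally positive element admits a factorization of Gauss/Bruhat type compatible with positivity, and that positive torus elements are automatically regular.

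First I would invoke Lusztig's factorization: any $g \in G(\cK_{>0})$ admits a unique decomposition $g = u_- \tau u_+$ with $u_\pm \in U_\pm(\cK_{>0})$ totally positive and $\tau \in T(\cK_{>0})$ positive in the split maximal torus $T$. (This is the positive version of the big Bruhat cell decomposition; the crucial point is that positivity of $g$ forces positivity of all three factors, and conversely.) Next I would show that $g$ is actually conjugate, inside $G(\cK_{>0})$, to a positive torus element. The strategy is to use the positive unipotent factors to ``sweep out'' the non-semisimple part: one constructs a positive $v_+ \in U_+(\cK_{>0})$ such that $v_+^{-1} g v_+$ has a smaller totally positive unipotent tail, and iterates (or, equivalently, solves the fixed-point equation $v_+^{-1} u_- \tau u_+ v_+ = \tau'$ in the positive setting, which can be done because the resulting equations for the coordinates of $v_\pm$ are subtraction-free and their solution stays in $\cK_{>0}$ by the contracting property of $\tau$ acting on positive unipotents).

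Once $g$ is conjugate, via some $h \in G(\cK_{>0})$, to $\tau' \in T(\cK_{>0})$, the regular semisimple claim is almost immediate. For every simple root $\alpha_i$, one has $\alpha_i(\tau') \in \cK_{>0}$, and crucially the values $\alpha_i(\tau')$ are algebraically independent data since $T(\cK_{>0}) \cong (\cK_{>0})^r$ under the coroot parametrization; in particular $\alpha(\tau') \neq 1$ for every root $\alpha$, so $\tau'$ is regular in $G$. Regularity is preserved by conjugation, so $g$ is regular semisimple. The unique maximal torus containing $g$ is then $h T h^{-1}$, which is split over $\cK$ because it is the $\cK$-conjugate of the split torus $T$ by an element of $G(\cK)$.

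The main obstacle is the ``positive diagonalization'' step: producing the conjugating element $h \in G(\cK_{>0})$ that sends $g$ into $T(\cK_{>0})$. Over $\R_{>0}$ this is Lusztig's theorem, and the only real work is to check that his argument goes through with $\cK_{>0}$ in place of $\R_{>0}$. Two safeguards make this plausible: first, all the combinatorial identities used (transition functions between positive parametrizations of Bruhat cells) are subtraction-free and thus valid over any semifield, in particular over $\cK_{>0}$; and second, one can specialize $t \mapsto t_0$ for a generic positive real $t_0$ and reduce uniqueness questions to the real case, since a regular semisimple element over $\R$ for almost every specialization must already be regular semisimple over $\cK$. Together these give both existence and uniqueness of the split maximal torus containing $g$.
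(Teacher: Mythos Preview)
The paper does not prove this statement at all: it simply quotes it as a result of Lusztig \cite{Lu2}, Theorem 5.6, and uses it as a black box. So there is no ``paper's proof'' to compare against; you are attempting to supply an argument where the author gives only a citation.

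That said, your proposed argument has a genuine gap. The step ``$\tau' \in T(\cK_{>0})$, hence $\alpha(\tau') \neq 1$ for every root $\alpha$'' is simply false as written: $1 \in \cK_{>0}$, so nothing about membership in $T(\cK_{>0})$ prevents a root value from equalling $1$. (Concretely, the identity element lies in $T(\cK_{>0})$ and is not regular.) The phrase ``the values $\alpha_i(\tau')$ are algebraically independent data'' does not make sense here; the $\alpha_i(\tau')$ are specific elements of $\cK_{>0}$, not indeterminates. The actual reason a totally positive $g$ is regular is not that its diagonal part lies in $T_{>0}$, but that the strictly positive unipotent factors $u_\pm \in U^\pm_{>0}$ force the eigenvalues apart. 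For $GL_n$ this is the classical Gantmacher--Krein fact that a totally positive matrix has simple positive spectrum; Lusztig's contribution is precisely the generalization of this eigenvalue separation to arbitrary reductive $G$, and it is this step, not the torus membership, that carries the content.

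Your ``positive diagonalization'' sketch is also too vague to stand on its own. Solving $v_+^{-1} u_- \tau u_+ v_+ \in T$ for $v_+$ is not an equation with an obvious subtraction-free solution, and the appeal to a ``contracting property of $\tau$'' presupposes exactly the regularity of $\tau$ you have not yet established. Your fallback of specializing $t \mapsto t_0$ and invoking the real case is in fact the cleanest route: for generic real $t_0$ the specialization lies in $G(\R_{>0})$, is regular semisimple by Lusztig over $\R$, and regularity is a Zariski-open condition on $g$; this already gives the theorem over $\cK$ without any of the preceding diagonalization machinery. If you want to present an argument rather than a citation, I would drop the first two-thirds of your proposal and make the specialization argument precise.
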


\begin{rmk} Lusztig's theorem was originally stated over the real numbers $\R$. However, using that the field of Puiseux series over $\R$ is a real closed field, we can invoke the Artin-Schreier theorem to conclude that the eigenvalues of $g \in G(\cK_{>0})$ are Puiseux series with positive leading coefficient. Analyzing the characteristic polynomial of $g$ allows us to conclude that these eigenvalues are in fact power series.
\end{rmk}

So by the above, we can conjugate $\rho(\gamma)$ to $H(\cK_{>0})$, and then take valuations to get an element of the coweight lattice $\Lambda$. However the conjugation of $\rho(\gamma)$ into $H(\cK_{>0})$ is only well-defined up to the action of the Weyl group $S_n$. Thus we actually get a well-defined element of the dominant coweights $\Lambda^+$ which we will denote $d(\gamma)$. Let $l$ be the lamination corresponding to $x$. Then $d(\gamma)$ should be viewed as the ``length" of the lamination associated to our along the loop $\gamma$. This analogy will be explored further in the next section.

We can state a version of theorem 7.6 of \cite{FG1} for the semi-field $\cK_{>0}$. Let $S$ be a surface, with or without boundary, with $\chi(S)<0$. 
Let $\gamma$ be a non-trivial loop on $S$, not homotopic to a boundary component of $S$. 

Denote by $S'$ the surface obtained by cutting $S$ along $\gamma$. We assume that $S'$ is connected. It has two boundary components, $\gamma_+$ and 
$\gamma_-$, whose orientations are induced by the one of $S'$.  The surface $S'$  has one or two components, 
each of them of negative Euler characteristic. Denote by $$\X_{G,S'}(\gamma_+, \gamma_-)(\cK_{>0})$$
the subspace of $\X_{G,S'}(\cK_{>0})$  given by the following condition: the monodromies along $\gamma_+$ and $\gamma_-$ are inverse, and the framing at the boundary is given by the unique attracting flags for $\gamma_+$ and $\gamma_-$.

\begin{theorem}
Let $S$ be a surface with $\chi(S)<0$, and $S'$ is obtained by cutting along a loop $\gamma$, as above. 
Then the restriction from $S$ to $S'$ provides us a principal 
$H(\cK_{>0})$-bundle

$$\X_{G,S}(\cK_{>0}) \longrightarrow \X_{G,S'}(\gamma_+, \gamma_-)(\cK_{>0}).$$

\end{theorem}

Thus we have 

\begin{enumerate}
\item We may restrict $G(\cK_{>0})$-bundles on $S$ to obtain $G(\cK_{>0})$-bundles on $S'$.

\item The image of the restriction map consists of $G(\cK_{>0})$-bundles on $S'$ satisfying the 
constraint that the monodromies around the oriented loops $\gamma_+$ and $\gamma_-$ are opposite, and that the flags attached to these boundaries are the unique attracting flags. 

\item Given a $G(\cK_{>0})$-bundle on $S'$ satisfying this monodromy constraint on the boundaries, one can glue it to a $G(\cK_{>0})$-bundle on $S'$, and the group $H(\cK_{>0})$ acts simply transitively on the set of the gluings. 
\end{enumerate}

The proof of the above theorem relies on an analysis of configurations of flags paramaterized by the cyclic set at infinity. In particular, it relies on analysis of the relationship between the cyclic sets at infinity of $S$ and $S'$. The proof is no different in the $\R_{>0}$ and $\cK_{>0}$ cases.

For $S$ a surface surface with boundary, the tropical points of $\X_{G,S}(\Z^t)$ arise as equivalence classes of points of $\X_{G,S}(\cK_{>0})$. Our notion of equivalence was based on taking valuations of a preferred set of functions on $\X_{G,S}(\cK_{>0})$, namely the various cluster co-ordinate charts. For closed surfaces, there is no clear choice of a set of functions. But we have the following conjecture, which we strongly believe to be true:

\begin{conj} Let $S$ be a surface with boundary such that $\chi(S)<0$. Suppose cutting along a loop $\gamma$ gives us the surface $S'$. Then the map
$$\X_{G,S}(\cK_{>0}) \longrightarrow \X_{G,S'}(\cK_{>0}).$$
respects valuations. In other words, if we take our preferred co-ordinate charts on $\X_{G,S}(\cK_{>0})$, if two points in $\X_{G,S}(\cK_{>0})$ have co-ordinates with the same valuation (recall that having co-ordinates with the same valuation in one chart is the same as having co-ordinates of the same valuation in every chart), then their images in $\X_{G,S'}(\cK_{>0})$ will have co-ordinates with the same valuation.
\end{conj}

Then it makes sense to define higher laminations on a closed surface as follows:

\begin{definition} A higher lamination on any surface $S$ is given by equivalence classes of points in $\X_{G,S}(\cK_{>0})$. Two points are equivalent if they give equivalent laminations on any surface $S'$ with boundary obtained by cutting $S$.
\end{definition}

Note that this coincides with with the definition for surfaces with boundary assuming the above conjecture. In fact, it is probably the case that to test for equivalence between two points of $\X_{G,S}(\cK_{>0})$ it is enough to look at the induced laminations on a finite number of surfaces $S'$ obtained by cutting and not all the possible surfaces obtained by cutting.


We can now state a conjectured tropical version of theorem 7.6 of \cite{FG1}. Let $S$ be a surface, with or without boundary, with $\chi(S)<0$. 
Let $\gamma$ be a non-trivial loop on $S$, not homotopic to a boundary component of $S$. 
Denote by $S'$ the surface obtained by cutting $S$ along $\gamma$. We assume that $S'$ is connected. It has two boundary components, $\gamma_+$ and 
$\gamma_-$, whose orientations are induced by the one of $S'$.  A lamination on $S$ induces a lamination on $S'$.
The surface $S'$  has one or two components, 
each of them of negative Euler characteristic. Denote by $${\X}^+_{G,S'}(\gamma_+, \gamma_-)(\Z^t)$$
the subspace of ${\X}^+_{G,S'}(\Z^t)$  given by the following condition: The lengths of the lamination along $\gamma_+$ and $\gamma_-$ are inverse: $$d(\gamma_+) = -w_0 d(\gamma_-).$$ Because the (semi-simple part of the) monodromy around a boundary component is given by a monomial map, this subspace of laminations will be a linear subspace of the space of laminations ${\X}^+_{G,S'}(\Z^t)$ of codimension ${\rm dim} H$.

\begin{conj}
Let  $S$ be a surface with $\chi(S)<0$, and $S'$ is obtained by cutting along a loop $\gamma$, as above. 
Then the restriction from $S$ to $S'$ gives a map

$$\X_{G,S}(\Z^t) \longrightarrow \X_{G,S'}(\gamma_+, \gamma_-)(\Z^t).$$

The fibers of this map are $H(\Z^t)/W(\gamma_+)$. Here $W(\gamma_+)$ is the subgroup of the Weyl group that fixes $d(\gamma_+)$ (or equivalently $d(\gamma_-)$). 

\end{conj}

Thus we have 

\begin{enumerate}

\item We may restrict laminations on $S$ to obtain laminations on $S'$.

\item The image of the restriction map consists of laminations $S'$ satisfying the 
constraint that the lengths of the laminations around the oriented loops  
$\gamma_+$ and $\gamma_-$ are opposite. 

\item Given a lamination on $S'$ satisfying this length constraint on the boundaries, one can glue it to a lamination on $S'$, and the group $H(\Z^t)$ acts transitively on the set of the gluings, with gluings that differ by an element of $W(\gamma_+)$ giving the same gluing. 
\end{enumerate}

\section{Comparison with other works}

One application of our definition of laminations is that projectived $G$-laminations give a spherical compactification of higher Teichmuller space. This will give a Thurston-type compactification of higher Teichmuller space. We will explain this below, and compare this compactification with those found in the works of Alessandrini, Parreau, and, in the case of $G=SL_2$, the work of Morgan and Shalen. We will need to understand length functions on higher Teichmuller spaces, and study their degenerations to the boundary.

We first review how to construct this compactification. Much of this was explained in \cite{FG4}. For any positive space ${\mathcal X}$ (throughout this section, ${\mathcal X}$ will be either $\A_{G,S}$ or $\X_{G,S}$; we will assume for that $S$ is a surface with boundary), we may form its tropicalization by taking the points of ${\mathcal X}$ with values in the semifields ${\mathbb A}^t$, for ${\mathbb A} = \Z, \Q$ or $\R$. In the later two cases, we have an action of ${\mathbb A}^*_{>0}$ (the multiplicative group of positive elements) on the tropical points ${\mathcal X}({\mathbb A}^t)$. One can define this in each chart of the positive atlas and show that the action is compatible with changes of co-ordinate chart. Then we have:

\begin{definition}
Let ${\mathcal X}$ be a positive space. Let ${\mathbb A}$ be either $\Q$ or $\R$. The projectivization ${\mathbb P}{\mathcal X} ({\mathbb A}^t)$ of the tropical ${\mathbb A}$-points 
of ${\mathcal X}$ is 
$$ {\mathbb P}{\mathcal X}({\mathbb A}^t) := ( {\mathcal X}({\mathbb A}^t)- \{0\} )/{\mathbb A}^*_{>0}. $$
\end{definition}

${\mathbb P}{\mathcal X}({\R}^t)$ is a sphere. Moreover, the transition maps between co-ordinate charts on this sphere are tropical maps, and hence piecewise-linear maps, so that ${\mathbb P}{\mathcal X}({\R}^t)$ has a natural piecewise-linear structure. The set 
${\mathbb P}{\mathcal X}({\Q}^t)$ is an everywhere  dense subset of 
${\mathbb P}{\mathcal X}({\R}^t)$. 

This sphere lives at the boundary of higher Teichmuller space ${\mathcal X}(\R_{>0})$, and gives us a logarithmic compactification as in \cite{A}, \cite{P}, \cite{MS}. Let the dimension of ${\mathcal X}$ be $d$. Then in any co-ordinate chart $H_{\alpha}$, taking logarithms of the co-ordinates gives an identification of ${\mathcal X}(\R_{>0})$ with $\R^d$. Then the compactification we seek is simply the radial compactification of $\R^d$. Any point in the spherical boundary of $\R^d$ corresponds to some relative growth rates of the co-ordinates in the chart $H_{\alpha}$.

One of the main theorems of \cite{FG1} tells us that transition functions between co-ordinate charts are given by positive rational functions (in fact, they are expected to be positive Laurent polynomials). Because of this, the growth rates in any chart $H_{\alpha}$ completely determine the growth rates in any other chart $H_{\beta}$. A point $(x_1, x_2, \dots, x_d) \in H_{\alpha}(\R^t)$ of the boundary corresponds to the limit of the points $$(e^{sx_1}, e^{sx_2}, \dots, e^{sx_d})$$ as $s \rightarrow \infty$. Suppose we have another co-ordinate chart $H_{\beta}$ with $\phi_{\alpha \beta}: H_{\alpha} \rightarrow H_{\beta}$ the transition map between co-ordinate charts. Let $$(y_1, y_2, \dots, y_d)=\phi_{\alpha \beta}^t (x_1, x_2, \dots, x_d)$$ be the tropicalization of $\phi_{\alpha \beta}$ applied to $(x_1, x_2, \dots, x_d)$. Then $$\phi_{\alpha \beta} (e^{sx_1}, e^{sx_2}, \dots, e^{sx_d})$$ is asymptotic to $$(e^{sy_1}, e^{sy_2}, \dots, e^{sy_d})$$ as $s \rightarrow \infty$.

Thus the radial logarithmic compactifications in different co-ordinate charts transform tropically, and this compactification is naturally identified with ${\mathbb P}{\mathcal X}({\R}^t)$. 

Now recall that we had a map from $\X(\cK_{>0})$ to the space of laminations $\X(\Z^t)$. Now let $$x=(x_1, x_2, \dots, x_d)$$ bbe a point of $\X(\cK_{>0})$. Suppose that the $x_i$ are in fact convergent power series in $\cK_{>0}=\R((t))_{>0}$. Then for small enough $t$, the $x_i$ are positive when evaluated at $t$, and we may view $x$ as a path in $\X(\R_{>0})$. The growth rate of $x_i$ as $t$ goes to $0$ is $\val(x_i)$. Thus given any lamination $l \in \X(\R^t)$ which is non-zero, we can construct a path in higher Teichmuller space that approaches the projectization of this lamination in the boundary. Laminations which are related by the action of ${\mathbb A}^*$ approach the same point on the boundary at different speeds. Laminations measure growth rates of a path in $\X(\R_{>0})$, while projectivized laminations measure the relative growth rates of the co-ordinates. In summary, we have mapped out the relationship between valuations, growth rates and tropical points.

To summarize: projectivized $G$-laminations give a spherical boundary for higher Teichmuller space. Points in the boundary parameterize relative growth rates of paths in higher Teichmuller space that go to infinity. We note that like Thurston's compactification of Teichmuller space, our compactification has a natural action of the (higher) mapping class group (the higher mapping class group is defined as the symmetries of the cluster algebra underlying the higher Teichmuller space). This turns out to be tautological from the definition of the higher mapping class group.


We now compare this compactification with the ones given by \cite{A} and \cite{P}. The construction outlined above, due mostly to Fock and Goncharov, works in the context of positive spaces. On the other hand, the constructions of Alessandrini and Parreau work in greater generality (for example, Parreau works in the context of representation varieties of finitely generated groups, while Alessandrini works in the context of compactifications of general algebraic varieties). On the one hand, their approaches have some advantages: in addition to being quite general, Parreau's work highlights the metric convergence of symmetric spaces to buildings, while Alessandrini relates precisely the relationship between spaces of valuations and logarithmic limit sets.

On the other hand, we are able to avoid some technical arguments that they use. Moreover, to a boundary point in their compactification, they associate some $\pi_1$ action on an affine building; however, the association is fairly non-constructive, it is one-to-many (each point in the boundary may be associated to many $\pi_1$ actions on different affine buildings), and it is difficult to pin down the invariant properties of the different possible answers.

Our contribution is to identify in geometric terms what kinds of configurations in the affine building can occur and define an equivalence relation (again, in geometric terms) on configurations coming from the same boundary point. Moreover, affine buildings are large and infinite objects; we are able to give finite invariant subsets of the building that completely capture the lamination. For example, for any triangulation of the surface, we can lift the triangulation to the universal cover. Attached to each triangle is a configuration of points in the affine building. Take the convex hull of these points. The union of these convex hulls over all the triangles in our triangulation gives a subset of the affine building which (in the case of $\A$-laminations) is finite up to the action of $\pi_1$. (One can do something similar in the case of $\X$-laminations.)

Finally, we will show that our Thurston-type compactification surjects onto the compactifcation in \cite{P}, although we conjecture that the compactifications are in fact the same. The compactification found in \cite{A} and \cite{P} is very similar to ours, except that instead of radially compactifying for the cluster co-ordinate systems, they use a different set of functions. For each path $\gamma$ on $S$, they consider the different coefficients of the characteristic polynomial of the monodromy around $\gamma$. Let $f$ be any such function.

Recall that for a point in higher Teichmuller space, the monodromy of $\gamma$ lies in $G(\R_{>0})$. One can easily check that the coefficients of the characteristic polynomial of a matrix $g \in G(\R_{>0})$ are given in terms of a positive expression in the generalized minors of this matrix. Hence, the function $f$ is given by positive rational functions of the cluster co-ordinates. Because the function $f$ is positive, it can be tropicalized to give the function $f^t$. This gives the c-length functions of \cite{P}. Moreover, the expression of $f$ as a positive Laurent polynomial menas that the growth rates of cluster co-ordinates in any chart completely determine the growth rates of $f$. From this, we get first that compactifying by growth rates of cluster co-ordinates is at least as refined as compactifying by coefficients of the characteristic polynomial around all loops in $S$.

\bibliographystyle{amsalpha}

\end{document}